\newcommand{\numberset}{\mathbb}
\newcommand{\N}{\numberset{N}}
\newcommand{\R}{\numberset{R}}
\newcommand{\Pk}{\numberset{P}}
\renewcommand{\epsilon}{\varepsilon}
\renewcommand{\theta}{\vartheta}
\renewcommand{\rho}{\varrho}
\renewcommand{\phi}{\varphi}
\newcommand{\xx}{\boldsymbol{x}}
\newcommand{\nn}{\boldsymbol{n}}
\newcommand{\aaa}{\boldsymbol{\alpha}}
\newcommand{\bb}{\boldsymbol{\beta}}
\newcommand{\vbf}{\mathbf{v}}
\newcommand{\Vbf}{\mathbf{V}}
\newcommand{\Zbf}{\mathbf{Z}}
\newcommand{\xpbf}{\mathbf{x}^\perp}
\newcommand{\Deltabf}{\mathbf{\Delta}}
\newcommand{\ubf}{\mathbf{u}}
\newcommand{\wbf}{\mathbf{w}}
\newcommand{\fbf}{\mathbf{f}}
\newcommand{\nablabf}{\bm{\nabla}}
\newcommand{\ebf}{\bm{\varepsilon}}
\DeclareMathOperator{\Divbf}{\mathbf{div}}
\DeclareMathOperator{\Div}{div}
\def\P0{{\Pi^{0, E}_{k}}}
\def\Pg{{\Pi^{0}_{k}}}
\def\PZ0P{{\boldsymbol{\Pi}^{0, E}_{k-1}}}
\def\PP0P{{\boldsymbol{\Pi}^{0, E}_{k}}}
\def\cskew{{c^{\rm skew}}}
\def\cskewE{{c^{{\rm skew},E}}}
\def\cskewEh{{c^{{\rm skew},E}_h}}
\def\cskewh{{c^{{\rm skew}}_h}}
\newcommand{\errbf}{\mathbf{e}}
\lbrace\begin{array}{@{}l@{}}}%
\theoremstyle{definition}
\theoremstyle{remark}
\newtheorem{remark}{Remark}[section]
\theoremstyle{remark}
\theoremstyle{plain}
\newtheorem{theorem}{Theorem}[section]
\newtheorem{proposition}{Proposition}[section]
\newtheorem{corollary}{Corollary}[section]
\newtheorem{lemma}{Lemma}[section]
\author[1]{M. Trezzi 
\thanks{manuel.trezzi@unimib.it}}
\affil[1]{Dipartimento di Matematica e Applicazioni, Universit\`a degli Studi di Milano Bicocca, Via Roberto Cozzi 55 - 20125 Milano, Italy}  
\title{A three-level CIP-VEM approach for the Oseen equation}
\begin{document}
\maketitle

\begin{abstract}
We study a pressure-robust virtual element method for the Oseen problem. In the advection-dominated case, the method is stabilized with a three level jump of the convective term.
To analyze the method, we prove specific estimates for the virtual space of potentials.
Finally, e prove stability of the proposed method in the advection-dominated limit and derive $h$-version error estimates for the velocity and the pressure.
\end{abstract}

\section{Introduction}

In recent years, there has been significant interest in developing pressure-robust numerical schemes \cite{JLMNR:2017, CQD:2023, BLV:2017}. These methods enable accurate approximations of velocity, particularly when dealing with non-smooth pressure, by eliminating the dependence on pressure in the error analysis of the velocity.
This advancement is crucial for ensuring the reliability and robustness of simulations in various fluid dynamics applications.
This work aims to introduce a pressure-robust Virtual Element Method (VEM) for the Oseen problem (the linearized version of the Navier-Stokes equations), which remains stable even in advection-dominated regimes. It is well known that when the diffusive coefficient becomes small with respect to the advection parameter, standard numerical schemes produce unsatisfactory solutions when the Péclet number is not sufficiently small.\medskip

The VEM, introduced in 2013 \cite{volley}, represents an advanced evolution of the classical Finite Element Method (FEM) to solve Partial Differential Equations (PDEs). One of its most attractive features is its ability to manage complex polygonal/polyhedral meshes, allowing each element to adopt arbitrary shapes. 
This flexibility is particularly advantageous in addressing mechanical problems involving intricate geometries; see for instance \cite{sema-simai} and \cite{ACTA-VEM} and the references therein. 
The VEM has found applications in numerous fields, including structural mechanics, fluid dynamics, and geophysics, owing to its ability to handle complex domain geometries and its robustness in numerical simulations.
The term \emph{virtual} in VEM highlights that it does not necessitate explicit knowledge of the basis functions; instead, only certain information is needed to construct the stiffness matrix.\medskip

In \cite{BLV:2017, BDDD:2022}, the authors propose a VEM for the Stokes equations that achieves divergence-free conditions by ensuring that the divergence of a virtual velocity is included in the space of the pressures in the definition of the local spaces. 
However, this requirement does not entirely eliminate the dependence on pressure in the error analysis of the velocity. 
A slight dependence on the pressure still exists due to the approximation of the right-hand side. 
As a result, the discrete scheme does not fully remove the pressure dependence but significantly reduces it.
There exists a VEM for the Oseen problem that remains stable in the hyperbolic limit, as introduced in \cite{BDV:2021}. 
To achieve stabilization, this method incorporates local SUPG-like terms for the vorticity equation and a jump term.
\medskip

A first FEM with CIP stabilization for the Oseen equation was presented in \cite{oseenFEM}. 
However, this method has the disadvantage of not being pressure-robust, meaning that the error estimate for the velocity depends on the pressure.
To address this issue, a pressure-robust FEM with a three-level CIP stabilization was introduced in \cite{BBCG:2024}.


Following the FEM method \cite{BBCG:2024}, we try to develop a VEM that achieves stability solely through jump operators applied to the skeleton of the mesh and conclude the analysis presented in \cite{trezzi2024}.
The method controls the polynomial parts of the jumps of $(\nablabf \ubf)\bb$ through a three-level CIP-form.
Specifically, we control
the jumps of $(\nablabf \ubf) \bb$, the jump of the $\text{curl}$ of $(\nablabf \ubf) \bb$, and the jump of the gradient of the $\text{curl}$ of $(\nablabf \ubf) \bb$.
Here, we are considering the scalar $\text{curl}$ applied to vector-valued functions defined as
\[
\text{curl} (\vbf) \coloneqq \frac{\partial v_2}{\partial x} - \frac{\partial v_1}{\partial y} \, .
\]
where $v_1$ and $v_2$ denotes the two components of the vector-valued function $\vbf$.We mention that, as is typical in a VEM context, we control a polynomial projection of these quantities, while the remaining part is controlled by a stabilization term.\medskip

To perform the theoretical analysis of the method, we introduce an Oswald interpolant \cite{feI} into the space of the stream function, similar to the one presented in \cite{BLT:2024,LPT:2024} in a VEM context. This operator maps piecewise smooth functions into the virtual element space. We emphasize that this operator is not defined on the velocity space but rather on the space of potentials.
The degrees of freedom in this space include not only the pointwise values of the function at certain points on the boundary but also the values of the gradient.
This implies that the difference between a function and its Oswald interpolant is controlled not only by the jumps of the function but also by the jumps of its gradient.
Furthermore, we had to prove certain inverse estimates for the space of potentials which, to the best of our knowledge, have never been proved before. 
In particular, we have to prove specific inverse estimates for certain Sobolev norms and a sort of inverse trace inequality for a family of virtual functions.
\medskip

This paper is organized as follows: the second section introduces the Oseen problem and the spaces used in the analysis.
The third section describes the method. The fourth section presents the theoretical analysis. 
Finally, we conclude with some numerical results in the fifth section.

\section{Model problem} \label{s:oseen-model-problem}

Given a polygonal and simple connected domain $\Omega \subset \R^2$ with boundary $\Gamma$, we consider the steady Oseen equation with homogenous Dirichlet boundary conditions:
\begin{equation} \label{eq:oseen-strong}
\left \{
\begin{aligned}
&\text{find $(\ubf,p)$ such that:}& \\
&- \nu \, \Divbf(\ebf(\ubf)) + (\nablabf \ubf) \bb + \sigma \, \ubf - \nabla p = \fbf \, \quad &\text{in }\Omega \, , \\
&\Div(\ubf) = 0 \, \quad &\text{in }\Omega \, , \\
&\ubf = 0 \, \quad &\text{on }\Gamma \, .
\end{aligned}
\right .     
\end{equation}
As usual in this type of problems, $\ubf$ denotes the velocity of the fluid while $p$ is the pressure. 
Furthemore, $\Divbf$ ($\Div$), $\nablabf$, $\nabla$, denote the vector (scalar) divergence operator, the gradient operator for vector fields and the gradient operator for scalar function while $\ebf(\ubf)$ is the symmetric gradient operator defined as
\begin{equation*} \label{eq:oseen-symm_grad}
\ebf(\ubf) 
\coloneqq
\dfrac{\nablabf \ubf + \nablabf^T \ubf}{2} \, .
\end{equation*}
The parameters $\nu, \sigma \in \R^+$ represent the diffusive and reaction coefficients, respectively. 
The transport advective field $\bb \in [W^1_\infty(\Omega)]^2$ is a sufficiently smooth vector-valued function that satisfies $\Div (\bb) = 0$.
Finally, the load term $\fbf$ is assumed to be $\fbf \in [L^2(\Omega)]^2$.
We introduce the spaces
\begin{equation*} 
\Vbf(\Omega) \coloneqq [H_0^1(\Omega)]^2 \quad \text{and} \quad
Q(\Omega) \coloneqq L^2_0(\Omega) =\{ v \in L^2(\Omega) \text{ s.t. }(v,1) = 0 \} \, .
\end{equation*}
A possible variational formulation for problem \eqref{eq:oseen-strong} reads as
\begin{equation} \label{eq:oseen-variational-equation}
\left \{
\begin{aligned}
&\text{find $(\ubf,p) \in \Vbf(\Omega) \times Q(\Omega)$ such that:}& \\
&A(\ubf, \vbf) + c(\ubf, \vbf) + b(\vbf, p) = \mathcal F(\vbf) \quad &\forall \vbf \in \Vbf(\Omega) \, , \\
&b(\ubf, q) = 0 \quad &\forall q \in Q(\Omega) \, ,
\end{aligned}
\right .
\end{equation}
where the bilinear forms are defined as
\begin{equation*}
A(\cdot,\cdot): \Vbf(\Omega) \times \Vbf(\Omega) \to \R \, , \quad A(\ubf, \vbf) \coloneqq \nu \int_\Omega \ebf (\ubf) : \ebf(\vbf) \, {\rm d} \Omega 
+
\sigma \int_\Omega \ubf \cdot \vbf \, {\rm d} \Omega \, ,
\end{equation*}
\begin{equation*}\label{eq:oseen-b_bf}
b(\cdot,\cdot): \Vbf(\Omega) \times Q(\Omega) \to \R \, , \quad b(\vbf, q) \coloneqq \int_\Omega q \, \Div(\vbf) \, {\rm d} \Omega \, ,
\end{equation*}
\begin{equation*}\label{eq:oseen-c_bf}
c(\cdot,\cdot): \Vbf(\Omega) \times \Vbf(\Omega) \to \R \, , \quad c(\ubf, \vbf) \coloneqq \int_\Omega [(\nablabf u) \bb] \cdot \vbf \,  {\rm d} \Omega \, ,
\end{equation*}
and as usual $\mathcal F : \Vbf(\Omega) \to \R$ is the $[L^2(\Omega)]^2-$inner product against the function $\fbf$
\[
\mathcal F(\vbf) \coloneqq \int_\Omega \fbf \cdot \vbf \, {\rm d}\Omega \, .
\]
We also define the bilinear form
\begin{equation}\label{eq:k}
\mathcal K(\cdot,\cdot): \Vbf(\Omega) \times \Vbf(\Omega) \to \R \, , \quad \mathcal K(\ubf, \vbf) \coloneqq A(\ubf,\vbf)  
+
c(\ubf,\vbf) \, .
\end{equation}
Thanks to the assumption $\Div(\bb) = 0$ and integration by parts, we mention that the bilinear form $c(\cdot,\cdot)$ is skew-symmetric.
Hence, it is equivalent to its skew-symmetric part defined as
\[
c^{\text{skew}}(\ubf,\vbf)
\coloneqq
\frac{1}{2} 
\bigl (c(\ubf,\vbf) - c(\vbf, \ubf)  \bigr) = c(\ubf,\vbf) \, \quad \forall \ubf,\vbf \in \Vbf(\Omega).
\]
Introducing the space
\begin{equation*}
\Zbf(\Omega) \coloneqq \Big \{ \vbf \in \Vbf(\Omega) \text{ such that } \Div (\vbf) = 0 \Big \} \, ,
\end{equation*}
we can reformulate problem \eqref{eq:oseen-strong} in a pressure independent form
\begin{equation} \label{eq:oseen-kernel-form}
\left \{
\begin{aligned}
&\text{find $\ubf \in \Zbf(\Omega)$ such that: }\\
&A(\ubf, \vbf) + c(\ubf, \vbf) = (\fbf, \vbf) \, \quad \forall \vbf \in \Zbf(\Omega) \, .
\end{aligned}
\right .    
\end{equation}
This new formulation is useful for the theoretical analysis but is impractical for implementation due to the difficulty in identifying divergence-free functions.
Since $\Omega$ is a smooth and simply connected domain with Lipschitz boundary, we can associate a potential $\varphi$ to a divergence free function $\vbf \in \Zbf(\Omega)$ such that
\[
\textbf{curl} (\varphi) 
\coloneqq 
\left ( \frac{\partial \varphi}{\partial y} , - \frac{\partial \varphi}{\partial x} \right)^T
= 
\vbf \, .
\]
Hence, we introduce a useful space for the analysis of the method. 
We define the space of the stream functions as
\begin{equation*}
\Phi(\Omega)
\coloneqq
\{
\varphi \in H^2(\Omega)
\text{ such that }
\varphi|_\Gamma = \nabla \varphi|_\Gamma \cdot \nn  = 0
\} \, .
\end{equation*}
Thanks to this space, the following sequence is exact on simple connected domains
\begin{equation}\label{eq:oseen-ex}
0
\xrightarrow{\mathmakebox[3em]{i}}
\Phi(\Omega)
\xrightarrow{\mathmakebox[3em]{\textbf{curl}}}
\Vbf(\Omega)
\xrightarrow{\mathmakebox[3em]\Div}
Q(\Omega)
\xrightarrow{\mathmakebox[3em]{0}}
0 \, ,
\end{equation}
where $i$ is the operator that associates to each real number the corresponding constant function.
The term \textit{exact} means that the range of each operator is equal to the kernel of the following operator. 
In particular the following equivalence holds
\[
\textbf{curl}(\Phi(\Omega)) = \Zbf(\Omega) \, .
\]
In the hyperbolic limit, when the parameter 
$\bb$ dominates over the others, standard discretizations of \eqref{eq:oseen-variational-equation} require stabilization. Without such stabilization, the numerical solutions obtained are unsatisfactory, exhibiting non-physical oscillations across element boundaries.
In the following sections, we describe how devise a VEM that is stable in the hyperbolic limit of  Oseen equation.

\section {The method}

\subsection{Mesh assumptions}
In this section, we introduce the notations and assumptions related to the decompositions of the domain $\Omega$ that will be considered.
We consider a sequence $\set{ \Omega_h }_h$ of decompositions of the domain $\Omega$ composed of non-overlapping (open) polygons $E \in \Omega_h$.
Here, $h$ denotes the maximum of the diameters of the elements in $\Omega_h$
\[
h \coloneqq \max_{E \in \Omega_h} h_E \, ,
\]
where $h_E$ is the diameter of the element $E \in \Omega_h$.
We suppose that $\set{\Omega_h}_h$ satisfies the following assumption:\\
\textbf{(A1) Mesh assumption.}
There exists a positive constant $\rho$ such that for any $E \in \set{\Omega_h}_h$:
\begin{itemize}
\item $E$ is star-shaped with respect to a ball $B_E$ of radius grater or equal than $  \rho \, h_E$,
\item any edge $e$ of $E$ has length grater or equal than $\rho \, h_E$.
\end{itemize} 
From now on, we denote by $|E|$ and $\nn^E$ the area and the unit outward normal of the polygon $E$, respectively.
The restriction of the unit normal to an edge $e \subset \partial E$ is denoted with $\nn^e$, while the tangent unit vector is denoted with $\mathbf t^e$.
The set of edges of a tessellation $\Omega_h$ is denoted by $\mathcal{E}_h$.
This set is divided into internal edges and boundary edges 
\[
\mathcal{E}_h = \mathcal{E}_h^o \cup \mathcal{E}_h^\partial \, .
\]
Given an interior edge $e \subset \partial E \cap \partial K$ for $E,K \in\Omega_h$, we define for a sufficiently smooth function $w$ the jump operator as
\[
[\![ w ]\!]_e = \lim_{s \to 0} w(\mathbf{x} - s\nn^E) + w(\mathbf{x} - s\nn^K) \, .
\]
If $e$ is a boundary edge, we set $ [\![ w ]\!]_e = w$.
We omit the subscript $e$ when the edge under consideration is clear from the context.
Let's introduce some basic spaces that will be useful later on.
Given two positive integers $n$ and $m$, and $p\in [0,+\infty]$, for any $E \in \Omega_h$ we define
\begin{itemize}
\item $\Pk_n(E)$: the set of polynomials on $E$ of degree lesser or equal than $n$  (with $\Pk_{-1}(E)=\{ 0 \}$),
\item $\Pk_n(\Omega_h) := \{q \in L^2(\Omega) \quad \text{such that} \quad q|_E \in  \Pk_n(E) \quad \text{for all $E \in \Omega_h$}\}$,
\item $W^m_p(\Omega_h) := \{v \in L^2(\Omega) \text{ such that } v|_E \in  W^m_p(E) \text{ for all $E \in \Omega_h$}\}$ equipped with the broken norm and seminorm
\[
\begin{aligned}
&\|v\|^p_{W^m_p(\Omega_h)} := \sum_{E \in \Omega_h} \|v\|^p_{W^m_p(E)}\,,
\qquad 
&|v|^p_{W^m_p(\Omega_h)} := \sum_{E \in \Omega_h} |v|^p_{W^m_p(E)}\,,
\qquad & \text{if $1 \leq p < \infty\, ,$}
\\
&\|v\|_{W^m_p(\Omega_h)} := \max_{E \in \Omega_h} \|v\|_{W^m_p(E)}\,,
\qquad 
&|v|_{W^m_p(\Omega_h)} := \max_{E \in \Omega_h} |v|_{W^m_p(E)}\,,
\qquad & \text{if $p = \infty\, ,$}
\end{aligned}
\]
In the case $p=2$, we set
\[
\begin{aligned}
&\|v\|^2_{m,\Omega_h} := \| v\|^2_{W^m_p(\Omega_h)}\,,
\qquad 
&|v|^2_{m,\Omega_h} := | v|^2_{W^m_p(\Omega_h)}\,.
\end{aligned}
\]
\end{itemize}
If $\mathcal D$ denotes one of the spaces introduced above, the notation $[\mathcal D]^l$ refers to the extension of $\mathcal D$ to vector-valued functions of dimension $l$.
Given an element $E \in \Omega_h$ We also introduce the following polynomial projections, which are essential in any VEM implementation:
\begin{itemize}
\item the $\boldsymbol{L^2}$\textbf{-projection} $\Pi_n^{0, E} \colon [L^2(E)]^2 \to [\Pk_n(E)]^2$, given by
\begin{equation*}
\label{eq:P0_k^E}
\int_E \mathbf{q}_n \cdot (\vbf - \, {\Pi}_{n}^{0, E}  \vbf) \, {\rm d} E = 0 \qquad  \text{for all $\vbf \in [L^2(E)]^2$  and $\mathbf{q}_n \in [\Pk_n(E)]^2 \, .$} 
\end{equation*} 
The extension of this operator to $2\times2$ tensors is denoted with the bold symbol $\mathbf{\Pi}_n^{0,E}: [L^2(E)]^{2\times 2} \to [\Pk_n(E)]^{2\times 2}$,
\item the $\boldsymbol{H^1}$\textbf{-seminorm projection} ${\Pi}_{n}^{\nabla,E} \colon [H^1(E)]^2 \to [\Pk_n(E)]^2$, defined by 
\begin{equation*}
\label{eq:Pn_k^E}
\left\{
\begin{aligned}
& \int_E \nabla  \, \mathbf{q}_n \cdot \nabla ( \vbf - \, {\Pi}_{n}^{\nabla,E}   \vbf)\, {\rm d} E = 0 \quad  \text{for all $\vbf \in H^1(E)$ and  $\mathbf{q}_n \in \Pk_n(E)$,} \\
& P_0(\vbf - \Pi^{\nabla,E}_n \vbf) = 0 \, ,
\end{aligned}
\right.
\end{equation*}
here $P_0:[H^1(E)]^2 \to \R^2$ is any projection operator onto constants. 
\end{itemize}
We denote with $\Pi^0_k$ the global version of $\P0$ obtained by gluing together the local projections
In the following, we will frequently use the following lemmas \cite{BDV:2021, brenner-scott:book}.
\begin{lemma}\label{lm:stab}
Let $\vbf \in \Zbf(\Omega) \cap [H^s(\Omega_h)]^2$ be a smooth function with $s \geq 1$.
There exists a potential $\psi \in \Phi(\Omega) \cap H^{s+1}(\Omega_h)$ such that $\textnormal{\textbf{curl}}(\psi) = \vbf$ and
\[
| \psi |_{H^{s+1}(\Omega_h)}
\lesssim
| \vbf |_{[H^s(\Omega_h)]^2}\, .
\]
\end{lemma}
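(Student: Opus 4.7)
The plan is to separate the existence of the potential from the regularity estimate, exploiting the exactness of the de Rham-type sequence \eqref{eq:oseen-ex} for the first part and an elementary pointwise identification of $\nabla\psi$ with a rotation of $\vbf$ for the second.

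\textbf{Step 1 (Existence of $\psi$).} Since $\Omega$ is a simply connected Lipschitz domain, the sequence \eqref{eq:oseen-ex} is exact, so $\textbf{curl}(\Phi(\Omega)) = \Zbf(\Omega)$. In particular, given $\vbf \in \Zbf(\Omega)$, there is a $\psi \in \Phi(\Omega)$ with $\textbf{curl}(\psi)=\vbf$, and this $\psi$ is unique up to an additive constant (which is fixed by the boundary condition $\psi|_\Gamma = 0$). This part does not require any of the piecewise smoothness of $\vbf$.

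\textbf{Step 2 (Elementwise regularity).} Using the definition
\[
\textbf{curl}(\psi) = \left(\tfrac{\partial \psi}{\partial y}, -\tfrac{\partial \psi}{\partial x}\right)^{\!T} = (v_1, v_2)^T,
\]
we read off the pointwise identity $\nabla \psi = (-v_2, v_1)^T$ a.e.\ in $\Omega$. Hence on each element $E \in \Omega_h$ we have $\nabla\psi|_E \in [H^s(E)]^2$ (with the same regularity as $\vbf|_E$), so that $\psi|_E \in H^{s+1}(E)$. Moreover, since the seminorm of order $s+1$ of $\psi$ is the seminorm of order $s$ of $\nabla\psi$, we obtain
\[
|\psi|_{H^{s+1}(E)}^2 \;=\; |\nabla \psi|_{[H^s(E)]^2}^2 \;=\; |\vbf|_{[H^s(E)]^2}^2 .
\]

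\textbf{Step 3 (Summation over the mesh).} Summing the elementwise identity of Step 2 over all $E \in \Omega_h$ and taking square roots yields
\[
|\psi|_{H^{s+1}(\Omega_h)} \lesssim |\vbf|_{[H^s(\Omega_h)]^2},
\]
which is the desired bound. The additional global regularity $\psi \in \Phi(\Omega)$ (i.e.\ the $H^2(\Omega)$ membership and the homogeneous Dirichlet/normal-derivative conditions on $\Gamma$) is provided directly by Step 1, and is not affected by the piecewise argument of Step 2.

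There is essentially no hard step here: the only subtlety is realizing that the exactness of \eqref{eq:oseen-ex} already supplies global membership in $\Phi(\Omega)$, so the piecewise Sobolev estimate only has to track derivatives of order exactly $s+1$ of $\psi$, which are derivatives of order $s$ of $\nabla\psi = \vbf^\perp$. No broken Poincaré-type inequalities or mesh-dependent constants enter, and the implicit constant in $\lesssim$ is in fact $1$.
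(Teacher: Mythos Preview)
Your argument is correct. The paper does not actually prove this lemma; it is stated there as a known result with a citation to \cite{BDV:2021, brenner-scott:book}, so there is no ``paper proof'' to compare against. Your approach---pulling the global potential from the exactness of \eqref{eq:oseen-ex} and then reading off the elementwise regularity from the pointwise identity $\nabla\psi=(-v_2,v_1)^T$---is the standard way to establish this and is fully adequate.

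One minor correction: in Step~2 the equality $|\psi|_{H^{s+1}(E)}^2 = |\nabla\psi|_{[H^s(E)]^2}^2$ is not literally an identity under the usual multi-index convention for Sobolev seminorms, because mixed derivatives get counted with different multiplicities on the two sides (e.g.\ for $s=1$, $\|\psi_{xy}\|_{0,E}^2$ appears once on the left and twice on the right). The two quantities are, however, equivalent with absolute constants depending only on $s$ and the spatial dimension, so the stated $\lesssim$ is fine; just drop the closing remark that the implicit constant is exactly $1$.
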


\begin{lemma}\label{lm:bh}
Under assumption \textbf{(A1)}, for any $E \in \Omega_h$ and for any sufficiently smooth function $\phi \in H^s(E)$ defined on $E$, we have that 
\[
\begin{aligned}
&\|\phi - \Pi^{0,E}_n \phi\|_{W^m_p(E)} \lesssim h_E^{s-m} |\phi|_{W^s_p(E)} 
\qquad & \text{$s,m \in \N$, $m \leq s \leq n+1$, $p=1, \dots, \infty \, ,$}
\\
&\|\phi - \Pi^{\nabla,E}_n \phi\|_{m,E} \lesssim h_E^{s-m} |\phi|_{s,E} 
\qquad & \text{$s,m \in \N$, $m \leq s \leq n+1$, $s \geq 1\, ,$}
\\
&\|\nabla \phi - \boldsymbol{\Pi}^{0,E}_{n} \nabla \phi\|_{m,E} \lesssim h_E^{s-1-m} |\phi|_{s,E} 
\qquad & \text{$s,m \in \N$, $m+1 \leq s \leq n+2\, ,$}
\end{aligned}
\]
with obvious extension to vector valued functions.
\end{lemma}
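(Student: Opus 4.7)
The plan is to treat this as a standard Bramble--Hilbert/Dupont--Scott estimate adapted to polygonal elements via the star-shapedness assumption \textbf{(A1)}. The first key input is that since $E$ is star-shaped with respect to a ball $B_E$ of radius $\gtrsim \rho\, h_E$, the averaged Taylor polynomial of degree $n$ (in the sense of Brenner--Scott, Chapter~4) provides, for every $\phi \in W^s_p(E)$ with $s \leq n+1$, a polynomial $q_n \in \Pk_n(E)$ satisfying
\[
|\phi - q_n|_{W^m_p(E)} \lesssim h_E^{s-m}\,|\phi|_{W^s_p(E)}, \qquad 0 \leq m \leq s,
\]
with the hidden constant depending only on $\rho$, $n$, $s$, $m$, $p$. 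The second key input is a polynomial inverse inequality on $E$: for any $r \in \Pk_n(E)$,
\[
|r|_{W^m_p(E)} \lesssim h_E^{-m}\,\|r\|_{L^p(E)},
\]
again with constants depending only on $\rho$ and the polynomial degree. This inverse inequality on polygonal star-shaped domains is classical and can be obtained via the subtriangulation induced by $B_E$ together with scaling.

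To prove the first estimate, I would combine the two inputs as follows: write
\[
\|\phi - \P0 \phi\|_{W^m_p(E)} \leq \|\phi - q_n\|_{W^m_p(E)} + \|q_n - \P0 \phi\|_{W^m_p(E)},
\]
bound the first term directly by the Dupont--Scott estimate, and bound the second using the inverse inequality:
\[
\|q_n - \P0 \phi\|_{W^m_p(E)} \lesssim h_E^{-m}\,\|q_n - \P0 \phi\|_{L^p(E)} \leq h_E^{-m}\bigl(\|q_n - \phi\|_{L^p(E)} + \|\phi - \P0 \phi\|_{L^p(E)}\bigr).
\]
For $p=2$ the $L^2$-stability $\|\P0 \phi\|_{L^2(E)} \leq \|\phi\|_{L^2(E)}$ gives $\|\phi - \P0 \phi\|_{L^2(E)} \leq \|\phi - q_n\|_{L^2(E)}$, and the general $p$ case follows by a standard continuity argument for the $L^2$-projection on shape-regular star-shaped domains. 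The estimate for $\Pi^{\nabla,E}_n$ is obtained analogously, except that the best-approximation property is now in the $H^1$-seminorm: from $|\phi - \Pi^{\nabla,E}_n\phi|_{1,E} \leq |\phi - q_n|_{1,E}$ and the normalization $P_0(\phi - \Pi^{\nabla,E}_n\phi)=0$, a Poincaré inequality on $E$ (whose constant scales like $h_E$ thanks to \textbf{(A1)}) controls the $L^2$ error, and the inverse inequality plus triangle inequality give the higher-order seminorm estimates.

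The third estimate follows by applying the first one componentwise to $\nabla \phi$: since $\nabla \phi \in [H^{s-1}(E)]^2$ with $s-1 \leq n+1$, one has
\[
\|\nabla\phi - \boldsymbol{\Pi}^{0,E}_n\nabla\phi\|_{m,E} \lesssim h_E^{(s-1)-m}\,|\nabla\phi|_{s-1,E} \lesssim h_E^{s-1-m}\,|\phi|_{s,E},
\]
under the shifted condition $m+1 \leq s \leq n+2$. The main technical obstacle is the case of general $p$ and $m \geq 1$ in the first estimate, where $\P0$ is no longer optimal in the relevant norm; that is precisely why the inverse-inequality detour through $L^p$ is needed, and why shape-regularity \textbf{(A1)} enters in an essential way through the polynomial inverse estimate on the polygon $E$.
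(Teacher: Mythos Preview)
The paper does not actually prove this lemma: it is stated as a known result and attributed to the references \cite{BDV:2021, brenner-scott:book}. Your sketch is the standard route to these estimates (averaged Taylor polynomial on a star-shaped domain, plus polynomial inverse inequalities and the best-approximation property of each projector), and it is correct in substance. The only place where you wave your hands a bit is the $L^p$-stability of the $L^2$-projection $\Pi^{0,E}_n$ for $p\neq 2$; this does hold on star-shaped domains with constants depending only on $\rho$ and $n$ (one way is to compare with the averaged Taylor polynomial and use equivalence of norms on the finite-dimensional polynomial space after scaling), but it deserves an explicit citation or a one-line justification rather than ``a standard continuity argument''. Otherwise your argument matches exactly what one finds in Brenner--Scott, which is precisely the source the paper invokes.
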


\subsection{Virtual element spaces} \label{s: local_space}
Given an integer $k \geq 2$ and an element $E \in \Omega_h$, we introduce the enhanced virtual element \cite{BMV:2019} space as
\begin{equation} \label{eq:oseen-local-space}
\begin{aligned}
\Vbf_h^k(E) \coloneqq \Big \{ \vbf_h \in [H^1(E)]^2 \text{ s.t. }  &(i) \, \Deltabf \vbf_h + \nabla s \in \xpbf\mathbb{P}_{k-1}(E), \quad \text{for some } s \in L_0^2(E) \, , \\
&(ii)  \, {\rm div}(\vbf_h) \in \mathbb{P}_{k-1}(E) \, , \\
&(iii)  \, \vbf_h|_e \in [\mathbb P_k(e)]^2 \, , \quad \forall e \in \partial E \, \text{ and }v_h|_{\partial E} \in [C^0(\partial E)]^2 , \\
&(iv) \, (\vbf_h - \Pi^{\nabla,E}_k \vbf_h, \xpbf \hat p_{k-1})_{0,E}=0 \, ,  \quad \forall \hat p_{k-1} \in \hat{\mathbb P}_{k-1 / k-3}(E)  \Big\} \, .
\end{aligned}
\end{equation}
where the vector $\xpbf$ is defined as $\xpbf \coloneqq [x_2; -x_1]^T$.
we consider the following set of linear operator as set of DoFs for the space $\Vbf_h^k(E)$:
\begin{itemize}
    \item the pointwise values of $\vbf_h$ at the vertices of the polygon $E$,
    \item the pointwise values of $\vbf_h$ at $k-1$ internal points of a Gauss-Lobatto quadrature for every edge $e \subset \partial E$,
    \item the moments of $\vbf_h$
    \[
    \frac{1}{|E|} \int_E \vbf_h \cdot \mathbf m^\perp \left( \dfrac{\xx - \xx_E}{h_E} \right)^{\aaa} {\rm d}E  \quad |\aaa| \leq k-3 \, , 
    \]
    where $\mathbf{m}^\perp \coloneqq \frac{1}{h_E}(x_2 - x_{2,E}, -x_1 + x_{1,E})$,
    \item the moments of $\Div \vbf_h$
    \[
    \frac{h_E}{|E|} \int_E \Div (\vbf_h) \, \left( \dfrac{\xx - \xx_E}{h_E} \right)^{\aaa} \, {\rm d}E \quad 0 < | \aaa | \leq k-1 \, .
    \]
\end{itemize} 
The global space for the velocities is obtained as in the scalar case by gluing togheter the local spaces
\[
\Vbf_h^k(\Omega_h)
\coloneqq
\Big\{
\vbf_h \in \Vbf(\Omega) \text{ such that } \vbf_h|_E \in \Vbf^k_h(E) \quad \forall E \in \Omega_h
\Big\} \, .
\]
The space of pressures, is discretized by the standard piecewise polynomials space
\begin{equation*} 
Q^k_h (\Omega_h)
\coloneqq
\Big \{
q_h \in L^2_0(\Omega) \text{ such that } q_h|_E \in \mathbb{P}_{k-1}(E) \quad \forall E \in \Omega_h
\Big \} \, .
\end{equation*}
In \cite{BLV:2017}, it was proved that the couple $[\Vbf_h^k(\Omega_h), \, Q_h^k(\Omega_h)]$ is inf-sup stable for $k \geq 2$.
It holds that
\begin{equation*}\label{eq:oseen-infsup}
\sup_{\vbf_h \in \Vbf^k_h(\Omega_h)} \frac{b(\vbf_h, q_h)}{\| \nablabf \vbf_h \|_{0,\Omega_h}} 
\geq
\hat \kappa
\| q_h \|_{0,\Omega_h}
\quad
\forall q_h \in Q_h^k(\Omega_h) \, ,
\end{equation*}
where $\hat \kappa$ denotes the inf-sup constant that does not depend on the mesh size $h$.
We now introduce the discrete kernel as
\[
\Zbf_h(\Omega_h) \coloneqq \{ \vbf_h \in \Vbf_h^k(\Omega_h) \text{ such that }   b(\vbf_h, q_h) = 0 \quad \forall q_h \in Q_h^k(\Omega_h) \}.
\]
Thanks to property $(ii)$ in \eqref{eq:oseen-local-space}, the following inclusion holds
\[
\Zbf_h(\Omega_h) \subseteq \Zbf(\Omega) \, .
\]
This means that the virtual functions in the discrete kernel are divergence-free. 
Now we construct the space of the discrete stream-functions \cite{BMV:2019}. 
We define
\begin{equation*} 
\begin{aligned}
\Phi^k_h(E) \coloneqq \Big \{ \varphi \in [H^2(\bar E)]^2 &\text{ such that }  (i) \, \Delta^2 \varphi \in \mathbb{P}_{k-1}(E) \, , \\
&(ii)  \, \varphi|_e \in \mathbb P_{k+1}(e) \, , \quad \forall e \in \partial E \, , \text{ and }v_h|_{\partial E} \in C^0(\partial E) \, , \\
&(iii)  \, \nabla \varphi|_e \in \mathbb [\mathbb P_{k}(e)]^2 \, , \quad \forall e \in \partial E \, , \text{ and }\nabla v_h|_{\partial E} \in [C^0(\partial E)]^2 \, , \\
&(iv) \, (\textbf{curl}(\varphi) - \Pi^{\nabla,E}_k \textbf{curl}(\varphi), \xpbf \hat p_{k-1})_{0,E}=0   \quad \forall \hat p_{k-1} \in \hat{\mathbb P}_{k-1 / k-3}(E)  \Big\} \, ,
\end{aligned}
\end{equation*}
and the corresponding global space
\[
\Phi_h^k(\Omega_h) \coloneqq \{ \varphi \in \Phi(\Omega) \text{ such that } \varphi|_E \in \Phi_h^k(E) \quad \forall E \in \Omega_h\} \, .
\]
The following set of linear operators is a set of DoFs for the space $\Phi_h^k(E)$:
\begin{itemize}
    \item the pointwise values of $\phi$ at the vertices of the polygon $E$,
    \item the pointwise values of $\nabla \phi$ at the vertices of the polygon $E$,
    \item the pointwise values of $\phi$ at $k-2$ distinct points of every edge $e \in \partial E$,
    \item the pointwise values of $\frac{\partial \phi}{\partial n}$ at $k-1$ distinct points of every edge $e \in \partial E$,
    \item the moments of $\textbf{curl} (\phi)$
    \[
    \int_E \textbf{curl}( \phi) \cdot \mathbf{m}^\perp p_{k-3} \, dE \quad \text{for all } p_{k-3} \in \mathbb{P}_{k-3}(E).
    \]
\end{itemize}
The first four types of DoFs are associated with the boundary of the element and are shared between elements with common edges or vertices, whereas the last type of DoFs is associated with a single element only.
We have constructed an exact subcomplex of \eqref{eq:oseen-ex}
\[
0
\xrightarrow{\mathmakebox[3em]{i}}
\Phi_h^k(\Omega_h)
\xrightarrow{\mathmakebox[3em]{\textbf{curl}}}
\Vbf_h^k(\Omega_h)
\xrightarrow{\mathmakebox[3em]\Div}
Q_h^k(\Omega_h)
\xrightarrow{\mathmakebox[3em]{0}}
0 \, .
\]
Finally, we recall from \cite{BDV:2021} two approximation results for the space $\Vbf_h^k(\Omega_h)$ and the space $\Phi_h^k(\Omega_h)$ respectively.
\begin{lemma}[Approximation with divergence-free virtual element functions]
\label{lm:oseen-interpolation}
Under assumption \textbf{(A1)}, for any $\vbf \in \Vbf(\Omega) \cap [H^{s+1}(\Omega_h)]^2$, there exists $\tilde\vbf_{\mathcal I} \in \Vbf^{k}_h(\Omega_h)$, such that for all $E \in \Omega_h$, 
\[
\|\vbf - \tilde\vbf_{\mathcal I}\|_{0,E} + h_E \|\nablabf (\vbf - \tilde\vbf_{\mathcal I})\|_{0,E} \lesssim h_E^{s+1} |v|_{s+1,E} \, , 
\]
where $0 < s \le k$.
\end{lemma}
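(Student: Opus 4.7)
My plan is to construct the interpolant $\tilde\vbf_{\mathcal I}$ via the canonical degree-of-freedom interpolation into $\Vbf_h^k(\Omega_h)$: given $\vbf$, define $\tilde\vbf_{\mathcal I}$ by prescribing the same values for the four families of DoFs listed after \eqref{eq:oseen-local-space}. Because the moments of $\Div(\vbf_h)$ are taken as DoFs, this yields the commuting property $\Div(\tilde\vbf_{\mathcal I})=\Pi^{0}_{k-1}\Div(\vbf)$ elementwise. In particular, a divergence-free $\vbf$ is mapped to a divergence-free interpolant, which explains the qualifier \emph{divergence-free} in the name of the lemma.

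Next, I would proceed elementwise. Fix $E\in\Omega_h$ and let $\vbf_\pi\in [\Pk_k(E)]^2$ be a local polynomial best approximation of $\vbf$ (for example $\vbf_\pi:=\PP0P\vbf$). A triangle inequality gives
\begin{equation*}
\|\vbf-\tilde\vbf_{\mathcal I}\|_{0,E}+h_E|\vbf-\tilde\vbf_{\mathcal I}|_{1,E}
\le
\|\vbf-\vbf_\pi\|_{0,E}+h_E|\vbf-\vbf_\pi|_{1,E}
+
\|\vbf_\pi-\tilde\vbf_{\mathcal I}\|_{0,E}+h_E|\vbf_\pi-\tilde\vbf_{\mathcal I}|_{1,E}\,.
\end{equation*}
The polynomial terms are controlled by $h_E^{s+1}|\vbf|_{s+1,E}$ via Lemma \ref{lm:bh}. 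For the virtual contribution, observe that $\wbf_h\coloneqq\vbf_\pi-\tilde\vbf_{\mathcal I}$ belongs to $\Vbf_h^k(E)$ and, by linearity of the DoF-based interpolation, its DoFs coincide with those of $\vbf_\pi-\vbf$. A scaled VEM stability estimate for the enhanced space then bounds $\|\wbf_h\|_{0,E}+h_E|\wbf_h|_{1,E}$ by a discrete $\ell^2$-norm of these DoFs, each of which is in turn estimated by local trace inequalities combined with standard polynomial approximation applied to $\vbf-\vbf_\pi$.

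The delicate point is precisely this scaled stability estimate for $\Vbf_h^k(E)$: one needs to show that every virtual function is bounded in the $H^1$-seminorm by a suitably scaled discrete norm of its DoFs, with a constant depending only on the mesh-regularity constant $\rho$ in assumption \textbf{(A1)}. The standard way to obtain it is a scaling and compactness argument on a reference configuration, exploiting the enhancement condition $(iv)$ in \eqref{eq:oseen-local-space}, which guarantees that $\PN\vbf_h$ is computable from the boundary and divergence DoFs and hence serves as the Poincaré-type anchor of the stability chain. Since all these ingredients are already developed in \cite{BDV:2021,BMV:2019}, the present lemma is then obtained by assembling the elementwise bounds above.
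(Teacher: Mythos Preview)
The paper does not actually prove this lemma: it simply \emph{recalls} it from \cite{BDV:2021} (see the sentence immediately preceding the statement), so there is no in-paper argument to compare against. Your sketch---DoF interpolation, triangle inequality against a polynomial approximant, and a scaled stability estimate for the enhanced space---is precisely the standard route taken in the cited reference, and you correctly identify the key technical ingredient (the DoF-stability bound under assumption \textbf{(A1)}) and defer it to \cite{BDV:2021,BMV:2019}. In short, your proposal is consistent with, and more detailed than, what the paper itself provides.
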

\begin{lemma}[Approximation property of $\Phi_h^k(\Omega_h)$]\label{lm:oseen-interpolation2}
Under assumption \textbf{(A1)}, for any $\phi \in \Phi(\Omega) \cap H^{s+2}(\Omega_h)$ there exists $\tilde\varphi_{\mathcal I} \in \Phi_h^k(\Omega_h)$ such that for all $E \in \Omega_h$ it holds
\[
\| \varphi - \tilde\varphi_{\mathcal I} \|_{0,E}
+
h_E | \varphi - \tilde\varphi_{\mathcal I} |_{1,E}
+
h_E^2 | \varphi - \tilde\varphi_{\mathcal I} |_{0,E}
\lesssim h_E^{s+2} | \varphi |_{s+2,E} \, ,
\]
where $0 < s \leq k$.
\end{lemma}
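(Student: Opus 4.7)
The plan is to take $\tilde\varphi_{\mathcal I}$ to be the canonical degree-of-freedom interpolant of $\varphi$ into $\Phi_h^k$, constructed element by element. Because the boundary DoFs of $\Phi_h^k(E)$ (vertex values of $\varphi$ and of $\nabla \varphi$, interior edge values of $\varphi$ and of $\partial \varphi/\partial n$) are shared with the neighbouring elements, matching them forces the local interpolants to glue into a globally $H^2$-conforming function, which inherits the homogeneous boundary conditions of $\varphi$ and therefore lies in $\Phi_h^k(\Omega_h)$. The regularity hypothesis $\varphi \in H^{s+2}(\Omega_h)$ with $s>0$ guarantees $\varphi|_E \in C^1(\overline{E})$ by Sobolev embedding in $\R^2$, so that all pointwise DoFs make sense.

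Next, I fix $E \in \Omega_h$ and let $\phi_\pi \in \mathbb P_{k+1}(E)$ be a polynomial approximation of $\varphi$ of Bramble-Hilbert type, for instance an averaged Taylor polynomial. Standard estimates yield
\[
| \varphi - \phi_\pi |_{m,E} \lesssim h_E^{s+2-m} | \varphi |_{s+2,E}, \qquad m = 0, 1, 2.
\]
Since $\mathbb P_{k+1}(E) \subset \Phi_h^k(E)$, the polynomial $\phi_\pi$ itself lies in $\Phi_h^k(E)$, so by the triangle inequality it is enough to bound the VEM residual
\[
w := \tilde\varphi_{\mathcal I} - \phi_\pi \in \Phi_h^k(E)
\]
in the $L^2$, $H^1$ and $H^2$ (semi)norms on each element and then combine with the polynomial estimate above.

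The key intermediate step is a scaled stability estimate showing that every $w \in \Phi_h^k(E)$ can be controlled in terms of its DoFs, schematically
\[
\| w \|_{0,E} + h_E |w|_{1,E} + h_E^2 |w|_{2,E} \lesssim h_E^2 \, \| \mathrm{dof}(w) \|_\ast,
\]
where $\|\cdot\|_\ast$ denotes a properly scaled $\ell^2$ norm of the DoF vector; this is obtained by passing to a reference element through the star-shapedness hypothesis in \textbf{(A1)} and invoking equivalence of norms on the resulting finite-dimensional reference space. Applied to $w = \tilde\varphi_{\mathcal I} - \phi_\pi$, whose DoFs coincide with those of $\varphi - \phi_\pi$, each DoF is then controlled by a scaled trace and Sobolev embedding inequality applied to $\varphi - \phi_\pi$; for instance, for a vertex $V$ of $E$,
\[
| (\varphi - \phi_\pi)(V) | + h_E |\nabla(\varphi - \phi_\pi)(V)| \lesssim h_E^{s+1} |\varphi|_{s+2,E},
\]
with analogous bounds for the edge point-values of $\varphi$ and of $\partial\varphi/\partial n$ and for the internal $\textnormal{\textbf{curl}}$-moments. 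Summing over DoFs produces the stated local estimate.

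The step I expect to be the main obstacle is precisely the stability bound for $\Phi_h^k(E)$ in terms of its DoFs, because the gradient point-evaluations at vertices are \emph{not} continuous on $H^2(E)$ and no direct scaling argument reaches them. The proof therefore has to exploit the enhanced/biharmonic characterization of the space together with standard VEM techniques, namely inverse and trace inequalities valid under \textbf{(A1)} and finite-dimensional norm equivalence on a reference configuration. Once this stability result is available, the rest of the argument reduces to routine Bramble-Hilbert bookkeeping and summation.
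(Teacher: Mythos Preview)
The paper does not prove this lemma: it is explicitly introduced as a result \emph{recalled from} \cite{BDV:2021}, together with the companion approximation result for $\Vbf_h^k(\Omega_h)$. There is therefore no paper proof to compare against.

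Your sketch follows the standard VEM interpolation strategy used in that reference and elsewhere: take the DoF interpolant, subtract a Bramble--Hilbert polynomial $\phi_\pi\in\Pk_{k+1}(E)\subset\Phi_h^k(E)$, and control the remainder $w=\tilde\varphi_{\mathcal I}-\phi_\pi\in\Phi_h^k(E)$ via a scaled stability estimate in terms of its DoFs. This is the correct architecture.

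One comment on the ``main obstacle'' you flag. The fact that vertex evaluations of $\nabla(\cdot)$ are not bounded functionals on $H^2(E)$ is irrelevant for the stability bound itself: that bound lives entirely in the finite-dimensional space $\Phi_h^k(E)$, where any two norms are equivalent, and the only issue is making the constant uniform in $E$ under \textbf{(A1)}. Where the lack of continuity on $H^2$ would bite is the step where you evaluate the DoFs of $\varphi-\phi_\pi$; but this is exactly why the hypothesis requires $s>0$, so that $\varphi\in H^{s+2}(E)\hookrightarrow C^1(\overline E)$ in two dimensions, and the pointwise gradient DoFs are well defined and controlled by $h_E^{s}|\varphi|_{s+2,E}$ via scaled Sobolev embedding. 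So the obstacle you anticipate is already taken care of by the regularity assumption, and the genuinely delicate point is rather the uniformity-in-shape of the norm equivalence on $\Phi_h^k(E)$, since VEM has no fixed reference element; this is where the star-shapedness condition in \textbf{(A1)} is used, typically by comparing $E$ to an auxiliary sub-triangulation.
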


\subsection{Virtual element forms and the discrete problem}

This section aims to construct a computable counterpart of the forms introduced in Section \ref{s:oseen-model-problem}.
We recall that the forms introduced in Section \ref{s:oseen-model-problem} can be decomposed into local contributions by restricting the integral to an element $E \in \Omega_h$
\[
\begin{aligned}
&A(\ubf, \vbf) \eqqcolon \sum_{E \in \Omega_h} A^E(\ubf, \vbf)\,,
\qquad 
&\cskew(\ubf, \vbf) \eqqcolon \sum_{E \in \Omega_h} \cskewE(\ubf, \vbf) \,,
\\
&b(\vbf, q) \eqqcolon \sum_{E \in \Omega_h} b^E(\vbf, q)\,,
\qquad
&\mathcal F(\vbf) \eqqcolon \sum_{E \in \Omega_h} \mathcal F^E(\vbf)\, .
\end{aligned}
\]
The bilinear form $A^E(\cdot,\cdot)$ is discretized by $A^E_h(\cdot,\cdot): \Vbf_h^k(E) \times \Vbf_h^k(E) \to \R$ defined as
\begin{equation*}
\begin{aligned}
A^E_h( \ubf_h, \vbf_h) 
&\coloneqq
\nu \int_E \PZ0P \ebf(\ubf_h) : \PZ0P  \ebf(\vbf_h) \, {\rm d}E
+ 
\sigma \int_E \P0 \ubf_h \cdot \P0  \vbf_h \, {\rm d}E \\
& \quad +
\left( \nu + \sigma \, h_E^2 \right) S^E_h \bigl( (I - \P0 )\ubf_h, (I-\P0) \vbf_h)\bigr)\, ,
\end{aligned}
\end{equation*}
where $S^E_h(\cdot, \cdot): \Vbf_h^k(E) \times \Vbf_h^k(E) \to \R$ is a VEM stabilization term that satisfies
\[
\alpha_* \| \nablabf \vbf_h \|_{0,E}
\leq
S^E_h (\vbf_h, \vbf_h)
\leq
\alpha^* \| \nablabf \vbf_h \|_{0,E} \quad
\forall \vbf_h \in \Vbf_h^k(E) \cap \text{ker}(\P0)\, ,
\]
where $\alpha_*$ and $\alpha_*$ are two uniform constants.
The convective term is replaced by $c^E_h(\cdot,\cdot): \Vbf_h^k(E) \times \Vbf_h^k(E) \to \R$ defined as
\begin{equation*}
c^E_h (\ubf_h, \vbf_h)
\coloneqq
\int_E \left [ \left( \mathbf{\Pi}_k^{0,E}\nablabf \ubf_h \right) \bb \right] \cdot \P0 \vbf_h \, {\rm d}E \, .
\end{equation*}
We consider the skew-part of this bilinear form
\begin{equation*}
\cskewEh(\ubf_h, \vbf_h) \coloneqq \frac{1}{2} \bigl(c^E_h(\ubf_h, \vbf_h) - c^E_h(\vbf_h, \ubf_h)\bigr)
\end{equation*}
In order to stabilize the method in the convection-dominated case, we introduce in the formulation of the method a three level CIP term.
First, we introduce the operator
\[
( \mathcal B \vbf ) |_E 
\coloneqq
 \text{curl} \bigl(\left( \nablabf \vbf \right) \bb \bigr) \quad \forall E \in \Omega_h \, ,
\]
defined as
\[
J^E_h(\ubf_h , \vbf_h) \coloneqq \sum_{i=1}^3 \delta_i \, J^{E,i}_h(\ubf_h , \vbf_h)
+
\delta \, h_E \, S^E_h \bigl( (I - \P0 )\ubf_h, (I-\P0) \vbf_h)\bigr) \, ,
\]
where $\delta = \max\{\delta_1, \delta_2, \delta_3 \}$, and
\begin{equation*}
\begin{aligned}
J_h^{E,1} (\ubf_h , \vbf_h)& \coloneqq \frac{1}{2} \int_{\partial E/\Gamma} h_E^2 \bigl[\!\!\bigl[\bigl( \nablabf \Pg \ubf_h \bigr) \bb \cdot \mathbf t^E \bigr ]\!\! \bigr] \bigl[\!\!\bigl[\bigl( \nablabf \Pg \vbf_h \bigr) \bb \cdot \mathbf t^E \bigr ]\!\! \bigr] \, {\rm d }s \, , \\
J_h^{E,2} (\ubf_h , \vbf_h)& \coloneqq \frac{1}{2} \int_{\partial E/\Gamma} h_E^4 \bigl[\!\!\bigl[ \mathcal B \mathbf{\Pi}_k^{0} \ubf_h\bigr ]\!\! \bigr] \bigl[\!\!\bigl[\mathcal B \mathbf{\Pi}_k^{0}\vbf_h \bigr ]\!\! \bigr] \, {\rm d }s \, , \\
J_h^{E,3} (\ubf_h , \vbf_h)& \coloneqq \frac{1}{2} \int_{\partial E/\Gamma} h_E^6 \bigl[\!\!\bigl[ \nabla \mathcal B \mathbf{\Pi}_k^{0} \ubf_h\bigr ]\!\! \bigr] \bigl[\!\!\bigl[\nabla \mathcal B \mathbf{\Pi}_k^{0} \vbf_h \bigr ]\!\! \bigr] \, {\rm d }s \, .
\end{aligned}
\end{equation*}
Thanks to the DoFs of $\Vbf_h^k(\Omega_h),$ it is not necessary to introduce a discretization of $b(\cdot, \cdot)$.
Now we introduce the local bilinear form 
\begin{equation}\label{eq:kh}
    \mathcal K_h^E(\ubf_h, \vbf_h) 
\coloneqq 
A_h^E(\ubf_h, \vbf_h)
+
\cskewEh(\ubf_h, \vbf_h)
+
J_h^E(\ubf_h, \vbf_h) \, .
\end{equation}
The right-hand side is discretized by
\[
\mathcal F_h^E(\vbf_h) \coloneqq \int_E \fbf \cdot \P0 \vbf_h \, {\rm d} E \, .
\]
We introduce the global versions of the bilenar forms by summing over all the polygons $E \in \Omega_h$
\[
\begin{aligned}
&A_h(\ubf_h, \vbf_h) := \sum_{E \in \Omega_h} A_h^E(\ubf_h, \vbf_h)\,,
\qquad 
&\cskewh(\ubf_h, \vbf_h) := \sum_{E \in \Omega_h} \cskewEh(\ubf_h, \vbf_h) \,,
\\
&J_h(\ubf_h, \vbf_h) := \sum_{E \in \Omega_h} J_h^E(\ubf_h, \vbf_h)\,,
\qquad
&\mathcal{F}_h(\vbf_h) := \sum_{E \in \Omega_h} \mathcal{F}_h^E(\vbf_h) \, ,
\end{aligned}
\]
and
\[
\mathcal K_h(\ubf_h, \vbf_h) := \sum_{E \in \Omega_h} \mathcal K_h^E(\ubf_h, \vbf_h) \, .
\]
The discrete problem reads as
\begin{equation}
\label{eq:oseen-discrete-problem}
\left \{
\begin{aligned}
& \text{find $(\ubf_h, p_h) \in [\Vbf^k_h(\Omega_h), Q_h^k(\Omega_h)]$ such that:} 
\\
& \mathcal K_h(\ubf_h, \, \vbf_h) + b(\vbf_h, p_h )= \mathcal{F}_h(\vbf_h) \quad &\text{ $\forall \vbf_h \in \Vbf^k_h(\Omega_h)$,}\\
&b(\ubf_h, q_h) = 0 \quad &\text{ $\forall q_h \in Q^k_h(\Omega_h)$,}
\end{aligned}
\right.
\end{equation}
or, in a divergence-free formula
\begin{equation*}
\left \{
\begin{aligned}
& \text{find $\ubf_h \in \Zbf_h(\Omega_h)$ such that:} 
\\
& \mathcal K_h(\ubf_h, \, \vbf_h) = \mathcal{F}_h(\vbf_h) \quad \text{ $\forall \vbf_h \in \Zbf_h(\Omega_h)$.}\\
\end{aligned}
\right.
\end{equation*}
\begin{remark}
If the solution of problem \eqref{eq:oseen-variational-equation} satisfies $(\nablabf \ubf)\bb \in H^{\frac{5}{2}+\epsilon}(\Omega)$, then it holds
\[
\bigl[\!\!\bigl[\bigl( \nablabf
\ubf \bigr) \bb \cdot \mathbf t^E \bigr ]\!\! \bigr]_e = \bigl[\!\!\bigl[\mathcal B \ubf\bigr ]\!\! \bigr]_e
=
\bigl[\!\!\bigl[\nabla \mathcal B \ubf \bigr ]\!\! \bigr]_e
=
0
\]
for all internal edge $e \in \mathcal E_h^o$.
\end{remark}

\section{Theoretical analysis}

\subsection{Preliminary results}

Here we present some preliminary results that will be useful in the analysis of the method. We begin with the two following well-known results:
\begin{proposition}[Trace inequality]
Under assumptions \textbf{(A1)}, for all $v \in H^1(E)$, we have that
\[
\| v\|_{0,\partial E} \lesssim h_E^{-\frac{1}{2}} \| v_h\|_{0,E} + h_E^{\frac{1}{2}} | v_h|_{1,E} \quad \forall E \in \Omega_h\,  ,
\]
or equivalently
\[
\| v\|_{0,\partial E} \lesssim h_E^{-\frac{1}{2}} \| v_h\|^{\frac{1}{2}}_{0,E} \bigl( \| v_h\|_{0,E}+ h_E | v_h|_{1,E} \bigr)^{\frac{1}{2}}\quad \forall E \in \Omega_h  \, .
\]
\end{proposition}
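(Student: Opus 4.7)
My plan is to prove the sharper (multiplicative) second inequality first by a direct integration-by-parts argument, and then derive the additive first inequality as an immediate corollary. The central tool is assumption \textbf{(A1)}, which provides a ball $B_E = B(\xx_0, \rho\, h_E)$ with respect to which $E$ is star-shaped. The key geometric ingredient is that, for every edge $e \subset \partial E$ with outward unit normal $\nn^e$, one has $(\xx - \xx_0) \cdot \nn^e \geq \rho\, h_E$ for all $\xx \in e$. I would justify this by a short convexity argument: if it failed at some $\xx \in e$, then a point of $B_E$ lying just to the ``outside'' of the tangent plane to $e$ would have its segment to $\xx$ exiting $E$ near $\xx$, contradicting star-shapedness.

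Next, for $v \in C^1(\bar E)$ (the general case following by density in $H^1(E)$), I apply the divergence theorem to the vector field $v^2 (\xx - \xx_0)$:
\[
\int_{\partial E} v^2 \, (\xx - \xx_0) \cdot \nn^E \, {\rm d}s
=
\int_E \bigl( 2\, v \, \nabla v \cdot (\xx - \xx_0) + 2\, v^2 \bigr) \, {\rm d}E.
\]
Using the geometric lower bound to estimate the left-hand side from below by $\rho\, h_E \|v\|_{0,\partial E}^2$, and using $|\xx - \xx_0| \leq h_E$ together with Cauchy--Schwarz on the right, I obtain
\[
\rho \, h_E \|v\|_{0,\partial E}^2
\leq
2 \, h_E \|v\|_{0,E} |v|_{1,E} + 2 \|v\|_{0,E}^2
=
2 \, \|v\|_{0,E}\bigl(\|v\|_{0,E} + h_E |v|_{1,E}\bigr).
\]
Dividing by $\rho \, h_E$ and taking the square root yields the second inequality.

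The first (additive) form is then an immediate consequence via AM--GM: since $\|v\|_{0,E}^{1/2}\bigl(\|v\|_{0,E} + h_E |v|_{1,E}\bigr)^{1/2} \leq \|v\|_{0,E} + h_E |v|_{1,E}$, multiplying by $h_E^{-1/2}$ produces exactly $h_E^{-1/2}\|v\|_{0,E} + h_E^{1/2}|v|_{1,E}$. This is essentially a classical argument and I do not expect serious obstacles; the only delicate point is the geometric lemma on $(\xx - \xx_0) \cdot \nn^e$, whose uniform constant $\rho$ is precisely what the mesh regularity assumption \textbf{(A1)} is designed to supply.
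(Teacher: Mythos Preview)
Your argument is correct and is precisely the classical proof of this scaled trace inequality: the divergence theorem applied to $v^2(\xx-\xx_0)$, combined with the geometric lower bound $(\xx-\xx_0)\cdot\nn^e \geq \rho\,h_E$ coming from star-shapedness with respect to $B_E$, yields the multiplicative form, and the additive form follows at once. The paper itself does not supply a proof of this proposition; it is simply quoted as a well-known preliminary result, so there is nothing to compare against beyond noting that what you wrote is the standard derivation one finds, e.g., in the polygonal/virtual-element literature.
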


\begin{proposition}[Poicarè-Friedrichs, \cite{brenner-sung:2018}] Under assumptions \textbf{(A1)}, for all $v \in H^1(E)$, we have that
\[
 \|v\|_{L^2(E)} \leq h_E^{-1} \left| \int_E v \, {\rm d} x\right| + h_E|v|_{H^1(E)} \quad \forall E \in \Omega_h \, ,
\]
and
\[
 \|v\|_{L^2(E)} \leq \left| \int_{\partial E} v \, {\rm d} s \right| + h_E|v|_{H^1(E)} \quad \forall E \in \Omega_h \, ,
\]
\end{proposition}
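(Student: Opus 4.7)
The plan is to use the classical decomposition-by-average argument combined with the scaling provided by assumption \textbf{(A1)}. Under \textbf{(A1)}, the element $E$ is star-shaped with respect to a ball of radius comparable to $h_E$ and has edges of length comparable to $h_E$. This gives the area--perimeter relations $|E| \lesssim h_E^2$ and $|\partial E| \gtrsim h_E$ with constants depending only on $\rho$, and ensures that the Poincar\'e--Wirtinger constant on $E$ is uniform across the mesh sequence (via a scaling to a reference domain together with a compactness/Bramble--Hilbert argument).

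For the first inequality, set $\bar v_E \coloneqq |E|^{-1}\int_E v\,{\rm d}x$. The Poincar\'e--Wirtinger inequality on the star-shaped element $E$ gives
\[
\| v - \bar v_E \|_{0,E} \lesssim h_E\, |v|_{1,E},
\]
and, since $\bar v_E$ is constant in space,
\[
\| \bar v_E \|_{0,E} = |E|^{1/2}\, |\bar v_E| = \frac{1}{|E|^{1/2}} \left| \int_E v \,{\rm d}x \right| \lesssim h_E^{-1} \left| \int_E v \,{\rm d}x \right|,
\]
where the last step uses $|E| \gtrsim h_E^2$. The triangle inequality $\|v\|_{0,E} \leq \|v-\bar v_E\|_{0,E} + \|\bar v_E\|_{0,E}$ then closes the estimate.

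For the second inequality I would repeat the argument using the boundary average $\bar v_\partial \coloneqq |\partial E|^{-1} \int_{\partial E} v\,{\rm d}s$ in place of $\bar v_E$. A Poincar\'e-type bound with boundary mean (again uniform in the mesh by the same scaling argument) yields
\[
\| v - \bar v_\partial \|_{0,E} \lesssim h_E\, | v |_{1,E},
\]
while
\[
\| \bar v_\partial \|_{0,E} = \frac{|E|^{1/2}}{|\partial E|} \left| \int_{\partial E} v\,{\rm d}s \right| \lesssim \left| \int_{\partial E} v\,{\rm d}s \right|,
\]
using $|E|^{1/2} \lesssim h_E$ and $|\partial E| \gtrsim h_E$ from \textbf{(A1)}. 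A second application of the triangle inequality gives the claim.

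The only genuine obstacle in this plan is the uniformity of the two Poincar\'e constants across the family $\{\Omega_h\}_h$, which is precisely the role played by assumption \textbf{(A1)}: after rescaling $E$ to a reference domain of unit diameter, the star-shapedness with respect to a ball of radius $\gtrsim \rho$ and the edge-length condition place the rescaled elements in a compact class of admissible shapes, over which a standard compactness argument produces a uniform Poincar\'e constant depending only on $\rho$. This is the content of the estimates recorded in \cite{brenner-sung:2018}.
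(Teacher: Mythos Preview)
The paper does not supply a proof of this proposition; it is stated with a citation to \cite{brenner-sung:2018} and used as a black box. Your argument is the standard and correct one: subtract the appropriate average, apply the scaled Poincar\'e--Wirtinger inequality for the fluctuation, and control the constant part using the geometric bounds $|E|\gtrsim h_E^2$ and $|\partial E|\gtrsim h_E$ furnished by \textbf{(A1)}. The uniformity of the Poincar\'e constants under \textbf{(A1)} that you invoke is precisely what the cited reference establishes, so your proposal is a faithful expansion of what the paper leaves to the literature.
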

Furthemore, we need the following inverse estimates for the space $\Phi_h^k(\Omega_h)$.


\begin{proposition}[Inverse estimate $H^2(E)-H^1(E)$]\label{prp:inv21}
Under the mesh assumption \textbf{(A1)}, we have that
\[
| \phi_h|_{2,E} \lesssim h_E^{-1} |\phi_h|_{1,E} \qquad \forall \phi_h \in \Phi_h^k(\Omega_h) \, .
\]
\end{proposition}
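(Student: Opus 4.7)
\medskip
\noindent\textbf{Proof plan.}
The approach is to transfer the claim to the velocity space through the $\textbf{curl}$ operator and then invoke a standard VEM-style polynomial/virtual splitting. A direct computation shows
\[
\|\textbf{curl}\,\varphi\|_{0,E}=|\varphi|_{1,E},\qquad |\textbf{curl}\,\varphi|_{1,E}=|\varphi|_{2,E}\qquad\forall\,\varphi\in H^2(E),
\]
and the exact sub-complex of Section~\ref{s: local_space} entails $\vbf_h:=\textbf{curl}(\phi_h)\in\Vbf_h^k(E)$. Hence the claim is equivalent to the $L^2$--$H^1$ inverse estimate
\[
|\vbf_h|_{1,E}\lesssim h_E^{-1}\,\|\vbf_h\|_{0,E}\qquad\forall\,\vbf_h\in\Vbf_h^k(E).
\]

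To establish this, I would decompose $\vbf_h=\vbf_\pi+(\vbf_h-\vbf_\pi)$ with $\vbf_\pi:=\PP0P\vbf_h\in[\mathbb{P}_k(E)]^2$. On the polynomial part, the classical inverse estimate on $[\mathbb{P}_k(E)]^2$ combined with the $L^2$-stability of $\PP0P$ yields $|\vbf_\pi|_{1,E}\lesssim h_E^{-1}\,\|\vbf_\pi\|_{0,E}\leq h_E^{-1}\,\|\vbf_h\|_{0,E}$. The virtual remainder $\vbf_h-\vbf_\pi$ lies in the finite-dimensional subspace $\Vbf_h^k(E)\cap\ker(\PP0P)$, and rescaling via $\hat\xx=(\xx-\xx_E)/h_E$ (for which $\|\cdot\|_{0,\hat E}=h_E^{-1}\|\cdot\|_{0,E}$ and $|\cdot|_{1,\hat E}=|\cdot|_{1,E}$) turns the desired bound into a norm equivalence $|\hat\vbf|_{1,\hat E}\lesssim\|\hat\vbf\|_{0,\hat E}$ on a finite-dimensional space.

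The main obstacle is the uniformity of this last equivalence across all admissible element shapes, since VEM does not admit a single reference element. Assumption \textbf{(A1)} enters here in two ways: the minimum edge-length condition bounds the number of edges of $E$ by a constant depending only on $\rho$, and the star-shapedness with respect to a ball of radius $\rho\,h_E$ makes the family of rescaled admissible shapes relatively compact. Continuity of finite-dimensional norms with respect to the element geometry then furnishes a constant depending only on $\rho$. Assembling the polynomial and virtual contributions via the triangle inequality, and invoking the isometry properties of $\textbf{curl}$, yields the proposition.
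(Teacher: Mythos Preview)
Your reduction to the velocity space via $\textbf{curl}$ matches the paper's opening move, but the core of the argument is different. Rather than splitting $\vbf_h$ into a polynomial projection plus a virtual remainder and invoking a shape-compactness argument, the paper applies an explicit analytic estimate (Theorem~2.1 in \cite{BMM:2023}), which for the divergence-free function $\vbf_h=\textbf{curl}(\phi_h)$ yields
\[
|\vbf_h|_{1,E}^2\lesssim h_E^{-2}\|\vbf_h\|_{0,E}^2+h_E^{-1}\|\vbf_h\|_{0,\partial E}^2,
\]
and then absorbs the boundary contribution through the multiplicative trace inequality combined with Young's inequality. Your route never uses $\Div\vbf_h=0$ and would in fact establish the inverse estimate on the full local space $\Vbf_h^k(E)$, a slightly stronger statement. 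The price is that your decisive step---the uniform norm equivalence on $\Vbf_h^k(\hat E)\cap\ker(\Pi_k^{0,\hat E})$ via compactness of the rescaled admissible shapes---remains a sketch: unlike FEM there is no reference element here, and the continuity of the equivalence constant with respect to the element geometry (the local spaces carry an enhancement condition involving $\Pi_k^{\nabla,E}$, itself shape-dependent) requires a careful argument that you do not supply. The paper's recourse to a ready-made analytic bound sidesteps this subtlety entirely.
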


\begin{proof}
Since we are in two dimensions, we have that the norm of the \textbf{curl} of a function is equivalent to the norm of the gradient. Hence we have
\[
| \phi_h |_{2,E} \approx |\textbf{curl} (\phi_h)|_{1,E} \, .
\]
Since the domain is simply connected, we have that $\textbf{curl} (\phi_h) = \vbf_h$ for some $\vbf_h \in \Vbf_h^k(\Omega_h)$.
This gives
\[
| \phi_h |_{2,E} \approx |\vbf_h|_{1,E} \, .
\]
Now, we use Theorem 2.1 in \cite{BMM:2023} and the fact that $\Div \vbf_h = 0$ to obtain
\[
|\vbf_h|^2_{1,E}
\leq
C_1 \bigl(h_E^{-2} \| \vbf_h\|^2_{0,E} + h_E^{-1} \| \vbf_h \|^2_{0,\partial E}\bigr) \, ,
\]
where the constant $C_1$ does not depend on the element size $h_E$.
Using a multiplicative trace inequality (whose constant is denoted with $C_t$) coupled with a Young inequality, we obtain
\[
\begin{aligned}
|\vbf_h|^2_{1,E}
&\leq
C_1 h_E^{-2} \| \vbf_h\|^2_{0,E} + C_1 h_E^{-1} \|  \vbf_h \|^2_{0,\partial E} \\
&\leq
C_1(1+C_t) h_E^{-2} \| \vbf_h\|^2_{0,E} 
+ 
C_1C_t h_E^{-1}\| \vbf_h\|_{0,E} | \vbf_h|_{1,E} \\
& \leq
C_1(1+C_t) h_E^{-2} \| \vbf_h\|^2_{0,E} 
+ 
\frac{C_1C_t}{\gamma} h_E^{-2}\| \vbf_h\|^2_{0,E} 
+
C_1C_t \gamma | \vbf_h|^2_{1,E}
\, ,
\end{aligned}
\]
and we take $\gamma$ sufficiently small such that $C_1 C_t \gamma < 1$ to obtain
\[
|\vbf_h|^2_{1,E} \lesssim \| \vbf_h\|^2_{0,E}\, .
\]
Gathering the previous inequalities, we conclude
\[
| \phi_h |_{2,E} \lesssim |\vbf_h|_{1,E}
\lesssim
h_E^{-1} \| \vbf_h\|_{0,E}
\lesssim
h_E^{-1} \| \textbf{curl} (\varphi_h)\|_{0,E}
\lesssim
h_E^{-1} |  \varphi_h|_{1,E} \, .
\]
\end{proof}

\begin{proposition}[Inverse estimate $H^1(E)-L^2(E)$] \label{prp:inv10}
Under the mesh assumption \textbf{(A1)}, we have that
\[
| \phi_h|_{1,E} \lesssim h_E^{-1} \|\phi_h\|_{0,E} \qquad \forall \phi_h \in \Phi_h^k(\Omega_h) \, .
\]
\end{proposition}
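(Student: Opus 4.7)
The plan is to imitate the strategy of Proposition \ref{prp:inv21}: pass through the divergence-free velocity $\vbf_h := \textbf{curl}(\phi_h)$, which lies in $\Vbf_h^k(\Omega_h) \cap \Zbf(\Omega)$ by exactness of the discrete sub-complex. In two dimensions $|\nabla \phi_h| = |\textbf{curl}(\phi_h)|$ pointwise, so $|\phi_h|_{1,E} = \|\vbf_h\|_{0,E}$ and the target inequality becomes $\|\vbf_h\|_{0,E} \lesssim h_E^{-1} \|\phi_h\|_{0,E}$.

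First, I apply an integration by parts to obtain
\[
\|\vbf_h\|^2_{0,E} = \int_E \textbf{curl}(\phi_h) \cdot \vbf_h \, {\rm d}E = -\int_E \phi_h \, \Delta \phi_h \, {\rm d}E + \int_{\partial E} \phi_h \, (\vbf_h \cdot \mathbf t^E) \, {\rm d}s.
\]
The interior term is bounded by Cauchy--Schwarz combined with Proposition \ref{prp:inv21}, since $\|\Delta \phi_h\|_{0,E} \leq \sqrt{2} \, |\phi_h|_{2,E} \lesssim h_E^{-1} |\phi_h|_{1,E}$, yielding the control $h_E^{-1} \|\phi_h\|_{0,E} |\phi_h|_{1,E}$.

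For the boundary term I bound the two factors separately. The field $\vbf_h$ satisfies $|\vbf_h|_{1,E} \lesssim h_E^{-1} \|\vbf_h\|_{0,E}$, which is exactly the inverse estimate for the velocity space derived inside the proof of Proposition \ref{prp:inv21}. Combined with the trace inequality this produces $\|\vbf_h\|_{0,\partial E} \lesssim h_E^{-1/2} |\phi_h|_{1,E}$. On the other side, applying the multiplicative trace inequality to $\phi_h$ gives $\|\phi_h\|_{0,\partial E} \lesssim h_E^{-1/2} \|\phi_h\|_{0,E} + \|\phi_h\|^{1/2}_{0,E} |\phi_h|^{1/2}_{1,E}$. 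Multiplying the two bounds produces a main contribution of size $h_E^{-1} \|\phi_h\|_{0,E} |\phi_h|_{1,E}$ together with a superlinear contribution $h_E^{-1/2} \|\phi_h\|^{1/2}_{0,E} |\phi_h|^{3/2}_{1,E}$.

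The last step is Young's inequality: the terms linear in $|\phi_h|_{1,E}$ are split with the standard conjugate exponents $(2,2)$, while the superlinear term is split with exponents $(4/3, 4)$ so that the factor $|\phi_h|_{1,E}$ appears precisely squared and multiplied by an arbitrarily small $\epsilon$. The resulting contribution $\epsilon \, |\phi_h|^2_{1,E}$ is then absorbed into the left-hand side, leaving $|\phi_h|^2_{1,E} \lesssim h_E^{-2} \|\phi_h\|^2_{0,E}$. The delicate point I anticipate is precisely the choice of conjugate exponents for the superlinear boundary term: a naive $(2,2)$ splitting would only give $|\phi_h|^3_{1,E}$ on the right and no possibility of absorption. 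No further serious obstacle arises, since discrete exactness, the Green identity, and Proposition \ref{prp:inv21} supply all the remaining ingredients.
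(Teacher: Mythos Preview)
Your argument is correct and follows the same skeleton as the paper: integrate by parts, bound the bulk term $\int_E\phi_h\,\Delta\phi_h$ via Proposition~\ref{prp:inv21}, bound the boundary term with trace inequalities, and close with Young. The only genuine difference is in how the factor $\|\nabla\phi_h\|_{0,\partial E}$ is handled. The paper invokes a polynomial inverse inequality on $\partial E$ to replace $\|\nabla\phi_h\|_{0,\partial E}$ by $h_E^{-1}\|\phi_h\|_{0,\partial E}$, after which a single trace bound on $\phi_h$ and a plain $(2,2)$ Young suffice. You instead identify $\|\nabla\phi_h\|_{0,\partial E}=\|\vbf_h\|_{0,\partial E}$ and control it by the trace inequality combined with the velocity inverse estimate $|\vbf_h|_{1,E}\lesssim h_E^{-1}\|\vbf_h\|_{0,E}$ already established inside the proof of Proposition~\ref{prp:inv21}; this yields $\|\nabla\phi_h\|_{0,\partial E}\lesssim h_E^{-1/2}|\phi_h|_{1,E}$ and produces the extra superlinear term $h_E^{-1/2}\|\phi_h\|_{0,E}^{1/2}|\phi_h|_{1,E}^{3/2}$, which you correctly absorb via the $(4,4/3)$ Young split. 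Your route is marginally longer but has the merit of not relying on an edgewise polynomial inverse estimate linking $\nabla\phi_h|_{\partial E}$ to $\phi_h|_{\partial E}$, a step that is somewhat delicate in $\Phi_h^k$ since the normal derivative on each edge is an independent degree of freedom.
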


\begin{proof}
Using integration by parts, Proposition \ref{prp:inv21}, and a polynomial inverse estimate, we obtain
\[
\begin{aligned}
| \phi_h|^2_{1,E} 
&=
-\int_E \phi_h \, \Delta \phi_h {\rm d}s + \int_{\partial E} (\nabla \phi_h \cdot \mathbf n^E) \, \phi_h \, {\rm d}s \\
&\leq
\| \phi_h \|_{0,E} \, |\phi_h|_{2,E} + \| \phi_h \|_{0,\partial E} \, \|\nabla\phi_h\|_{0,\partial E} \\
&\leq
C_{v}\,h_E^{-1}\,\| \phi_h \|_{0,E} \,|\phi_h|_{1,E} 
+ 
C_{p}\, h_E^{-1} \, \| \phi_h \|_{0,\partial E}^2 \\
&\leq
C_{v}\,h_E^{-1}\,\| \phi_h \|_{0,E} \,|\phi_h|_{1,E} 
+ 
C_{p} \, C_t\, h_E^{-1} \, \| \phi_h \|_{0,E} \bigl( h_E^{-1} \, \| \phi_h \|_{0,E} + | \varphi_h|_{1,E} \bigr) \, .
\end{aligned}
\]
where $C_{v}$ is the constant that appears in Proposition \ref{prp:inv21}, $C_t$ is the constant in the trace inequality, and $C_p$ is the constant of the polynomial inverse inequality on the boundary. After defining
\[
\tilde C \coloneqq C_v + C_pC_t \, ,
\]
we have that
\[
|\varphi_h|^2_{1,E} \leq \tilde C \bigl (h_E^{-2} \| \varphi_h\|^2_{0,E} + h_E^{-1} \| \varphi_h\|_{0,E} | \varphi_h|_{1,E} \bigr) \, .
\]
We conclude using Young's inequality
\[
\| \varphi_h\|_{0,E} | \varphi_h|_{1,E} \leq 2\gamma\| \varphi_h\|_{0,E}^2 + \frac{1}{2\gamma}| \varphi_h|_{1,E}^2
\]
with $\gamma = 1/\tilde C$.
\end{proof}
\noindent Combining this two propositions, we easily obtain the following result.
\begin{corollary}[Inverse estimate $H^2(E)-L^2(E)$]
Under the mesh assumption \textbf{(A1)}, we have that
\[
| \phi_h|_{2,E} \lesssim h_E^{-2} \|\phi_h\|_{0,E} \qquad \forall \phi_h \in \Phi_h^k(\Omega_h) \, .
\]
\end{corollary}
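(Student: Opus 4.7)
The statement follows immediately by chaining the two preceding propositions, so the plan is essentially a one-line composition and there is no real obstacle.

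First I would apply Proposition \ref{prp:inv21} to $\phi_h \in \Phi_h^k(\Omega_h)$ restricted to the element $E$, obtaining
\[
|\phi_h|_{2,E} \lesssim h_E^{-1} |\phi_h|_{1,E}.
\]
Then I would bound the right-hand side by invoking Proposition \ref{prp:inv10} on the same element, which gives
\[
|\phi_h|_{1,E} \lesssim h_E^{-1} \|\phi_h\|_{0,E}.
\]
Substituting this into the previous estimate yields the desired bound
\[
|\phi_h|_{2,E} \lesssim h_E^{-2} \|\phi_h\|_{0,E}.
\]

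Since both auxiliary propositions have already been proved and the hidden constants depend only on the mesh regularity parameter $\rho$ from assumption \textbf{(A1)} and the polynomial degree $k$, the composed constant retains the same dependencies. There is no subtle point here: the only thing worth checking is that both propositions indeed apply to the same element $E$ and to the same function $\phi_h$, which they clearly do since $\Phi_h^k(\Omega_h)|_E \subset \Phi_h^k(E)$. Hence the corollary follows at once.
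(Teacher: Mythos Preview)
Your proposal is correct and matches the paper's approach exactly: the paper states the corollary immediately after Propositions~\ref{prp:inv21} and~\ref{prp:inv10} with the remark that it follows by combining them, and your chaining argument is precisely that combination.
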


\begin{remark}
The proof of Proposition \ref{prp:inv21} relies on the fact that the domain is two-dimensional. In this setting, the gradient of a function and the \textbf{curl} coincide as operators, up to a rotation. However, this equivalence does not hold in three dimensions, and therefore, this technique cannot be directly extended to the 3D case.
\end{remark}

We need to prove an inverse trace inequality for a class of virtual functions in the space $\Phi_h^k(\Omega_h)$. In particular, given an $E \in \Omega_h$, we would like to control the $L^2(E)$-norm of a virtual function whose internal DoFs are equal to zero with the $L^2(\partial E)$-norm.

\begin{proposition}
Under assumptions \textbf{(A1)}, if the internal degrees of freedom are equal to zero, we have that
\[
\| \varphi_h \|^2_{0,E}
\lesssim
h_E\|\varphi_h\|^2_{0,\partial E} 
+
h^3_E
\| \nabla \varphi_h \|^2_{0,\partial E} \, \quad \forall E \in \Omega_h, \, \forall \varphi_h \in \Phi_h^k(\Omega_h).
\]
\end{proposition}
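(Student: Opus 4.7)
The plan is to combine the boundary form of the Poincar\'e--Friedrichs inequality with an auxiliary inverse--trace estimate for velocity virtual functions having vanishing internal degrees of freedom.

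I would first use the boundary version of the Poincar\'e--Friedrichs inequality stated in the preliminaries, together with the Cauchy--Schwarz inequality on $\partial E$ and the bound $|\partial E| \lesssim h_E$ (which follows from assumption \textbf{(A1)}), to write
\[
\|\varphi_h\|_{0,E} \leq \left| \int_{\partial E} \varphi_h \, {\rm d}s\right| + h_E \, |\varphi_h|_{1,E} \lesssim h_E^{1/2}\,\|\varphi_h\|_{0,\partial E} + h_E \, |\varphi_h|_{1,E}.
\]
Squaring yields
\[
\|\varphi_h\|_{0,E}^2 \lesssim h_E\,\|\varphi_h\|_{0,\partial E}^2 + h_E^2 \, |\varphi_h|_{1,E}^2,
\]
so it suffices to prove $|\varphi_h|_{1,E}^2 \lesssim h_E\,\|\nabla\varphi_h\|_{0,\partial E}^2$.

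To bound $|\varphi_h|_{1,E}$ I would consider the auxiliary function $\vbf_h := \textbf{curl}(\varphi_h)$. By the exactness of the discrete sub--complex, $\vbf_h\in\Vbf_h^k(E)$; since $\Div(\vbf_h)=0$ its divergence--moment internal DoFs vanish identically, while the remaining internal DoFs (moments of $\vbf_h\cdot \mathbf{m}^\perp$ against polynomials of degree $\leq k-3$) coincide, up to a polynomial change of basis, with the internal DoFs of $\varphi_h$ and therefore vanish by hypothesis. Moreover $|\nabla\varphi_h|=|\vbf_h|$ pointwise, so $|\varphi_h|_{1,E}=\|\vbf_h\|_{0,E}$ and $\|\nabla\varphi_h\|_{0,\partial E}=\|\vbf_h\|_{0,\partial E}$. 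The target inequality then reduces to the inverse--trace type estimate
\[
\|\vbf_h\|_{0,E} \lesssim h_E^{1/2}\,\|\vbf_h\|_{0,\partial E}
\qquad\text{for every } \vbf_h\in\Vbf_h^k(E) \text{ with vanishing internal DoFs.}
\]

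I would establish this auxiliary estimate by a standard scaling argument based on assumption \textbf{(A1)}. After rescaling $E$ to unit diameter, the claim becomes $\|\widetilde{\vbf}_h\|_{0,\tilde E}\lesssim \|\widetilde{\vbf}_h\|_{0,\partial\tilde E}$ on the finite--dimensional subspace of $\Vbf_h^k(\tilde E)$ whose internal DoFs vanish. On that subspace $\|\cdot\|_{0,\partial\tilde E}$ is actually a norm: if the boundary trace vanishes, then every boundary DoF (vertex values and Gauss--Lobatto edge values) vanishes, and together with the vanishing internal DoFs the unisolvence of the DoF set forces $\widetilde{\vbf}_h\equiv 0$. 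Equivalence of norms on a finite--dimensional space then yields the scaled inequality, and undoing the scaling produces the factor $h_E^{1/2}$. Plugging this back gives
\[
\|\varphi_h\|_{0,E}^2 \lesssim h_E\,\|\varphi_h\|_{0,\partial E}^2 + h_E^2 \cdot h_E\,\|\nabla\varphi_h\|_{0,\partial E}^2 = h_E\,\|\varphi_h\|_{0,\partial E}^2 + h_E^3\,\|\nabla\varphi_h\|_{0,\partial E}^2,
\]
which is the desired estimate. The delicate point is precisely this scaling step: VEM polygons are not images of a single reference element, so uniformity of the hidden constant across the mesh family must be extracted from \textbf{(A1)} (the star--shapedness with respect to a ball of radius $\rho h_E$ together with the uniform lower bound on edge lengths), as is customary in VEM analysis.
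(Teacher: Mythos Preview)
Your argument is correct and runs parallel to the paper's, but the two diverge in where the ``inverse trace'' work is done. You apply the boundary Poincar\'e--Friedrichs inequality once and then reduce to $\|\vbf_h\|_{0,E}\lesssim h_E^{1/2}\|\vbf_h\|_{0,\partial E}$ for $\vbf_h=\textbf{curl}(\varphi_h)$, which you justify by a scaling/norm--equivalence argument on the subspace of $\Vbf_h^k(E)$ with vanishing internal DoFs. The paper instead applies Poincar\'e--Friedrichs \emph{twice} (once to $\varphi_h$, once to each $\partial_i\varphi_h$) to push everything up to $|\varphi_h|_{2,E}\approx|\vbf_h|_{1,E}$, and then invokes Theorem~2.1 of \cite{BMM:2023} to obtain $|\vbf_h|_{1,E}\lesssim h_E^{-1/2}\|\vbf_h\|_{0,\partial E}$ directly, with a constant explicitly uniform under \textbf{(A1)}. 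Both routes hinge on the same identification $\vbf_h=\textbf{curl}(\varphi_h)$ and on the observation that the internal velocity DoFs vanish; the paper's route trades an extra Poincar\'e step for a clean citation, while yours is more self--contained but leaves the uniformity of the norm--equivalence constant across the polygon family as the admittedly delicate point you flag at the end. That point is genuine: since VEM has no reference element, the finite--dimensional compactness argument must be supplemented by a shape--compactness argument under \textbf{(A1)} (bounded number of edges, star--shapedness) to make the constant mesh--independent, which is exactly what results like the one the paper cites provide.
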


\begin{proof}
We use two times the Poincarè-Friedrichs inequality
\[
\begin{aligned}
\| \varphi_h \|_{0,E}
&\lesssim
h_E | \varphi_h|_{1,E}
+ \left| \int_{\partial E} \varphi_h \, {\rm d}s \right| \\
&\lesssim
h^2_E | \varphi_h|_{2,E}
+ h_E \sum_{i=1}^2 \left| \int_{\partial E} \frac{\partial \varphi_h}{\partial x_i} \, {\rm d}s \right|
+ \left| \int_{\partial E} \varphi_h \, {\rm d}s \right| \, . \\
\end{aligned}
\]
Now, we observe that
\[
\left| \int_{\partial E} \varphi_h \, {\rm d}s \right| \leq h_E^{\frac{1}{2}} \|\varphi_h\|_{0,\partial E} \, ,
\quad \text{and}\quad
h_E \sum_{i=1}^2 \left| \int_{\partial E} \frac{\partial \varphi_h}{\partial x_i} \, {\rm d}s \right| \leq h_E^{\frac{3}{2}}\|\nabla\varphi_h\|_{0,\partial E} \, .
\]
On the $H^2(E)$-seminorm, we use again Theorem 2.1 in \cite{BMM:2023}. We obtain
\[
h_E^2 | \varphi_h|_{2,E} \lesssim h_E^2 | \textbf{curl} (\varphi_h)|_{1,E}
\lesssim 
h_E^{\frac{3}{2}} \| \textbf{curl} (\varphi_h)\|_{0,\partial E}  
\lesssim
h_E^{\frac{3}{2}} \| \nabla \varphi_h\|_{0,\partial E} \, .
\]
\end{proof}

We introduce an interpolation operator $\pi$ that maps a sufficiently piecewise smooth function to a virtual element function. This operator is commonly referred to as the Oswald interpolant or Averaging interpolant. The key idea is to average the DoFs shared by more than one element, while leaving the others unchanged.
The Oswald interpolant \cite{feI} has been applied to a large number of problems. In particular, we can mention \cite{burman:2004} and \cite{burman:2007} for an application to FEM. 
A VEM application of this interpolant can be found in \cite{BLT:2024} and \cite{LPT:2024}. 

\begin{proposition} \label{prp:oswald}
Under assumption \textbf{(A1)}, let $p \in \mathbb P_n(\Omega_h)$ a piecewise polynomial with $1 \leq n \leq k$. Let $\pi p$ the Oswald interpolant of $p$ into the space $\Phi^k_h(\Omega_h)$, then it holds
\[
\| ( I - \pi) p \|_{0,E}
\lesssim
h_E^{\frac{1}{2}} \sum_{e \in \mathcal{F}_E} \| [\![ p ]\!] \|_{0,e}
+
h_E^{\frac{3}{2}} \sum_{e \in \mathcal{F}_E} \| [\![ \nabla p ]\!] \|_{0,e}
\qquad \text{$\forall p \in \Pk_n(\Omega_h)$}\, ,
\]
where $\mathcal{F}_E \coloneqq \{ e \in \mathcal  E_h \, \text{ such that } \, e \cap \partial E \ne \emptyset \}$ is the set of the edges with at least one endpoint which is a vertex of $E$.
\end{proposition}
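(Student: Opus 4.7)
The plan is to view $(I-\pi)p|_E$ as an element of $\Phi_h^k(E)$ with vanishing internal degrees of freedom, and then reduce the claim to a boundary estimate via the inverse trace inequality just proved. First I would observe that for $1 \leq n \leq k$ the polynomial $p|_E$ lies in $\Phi_h^k(E)$: its biharmonic is in $\mathbb{P}_{n-4} \subset \mathbb{P}_{k-1}(E)$, the edge traces of $p$ and $\nabla p$ fit in $\mathbb{P}_{k+1}(e)$ and $[\mathbb{P}_k(e)]^2$ respectively, and the enhancement condition is trivially satisfied. Hence $(I-\pi)p|_E \in \Phi_h^k(E)$. Moreover, the internal DoFs, i.e.\ the moments of $\textbf{curl}(\phi)$ against $\mathbf m^\perp p_{k-3}$, are \emph{not} shared with neighbouring elements, so the Oswald averaging leaves them unchanged on $E$. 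Therefore $(I-\pi)p|_E$ has vanishing internal DoFs, and applying the preceding inverse trace proposition yields
\[
\|(I-\pi)p\|_{0,E}^2
\;\lesssim\;
h_E \,\|(I-\pi)p\|_{0,\partial E}^2
\,+\,
h_E^3\, \|\nabla(I-\pi)p\|_{0,\partial E}^2 \, ,
\]
which reduces the task to controlling the two boundary norms.

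Next, on each edge $e \subset \partial E$ the restriction $(I-\pi)p|_e$ is a one-dimensional polynomial of degree at most $k+1$, entirely determined by its boundary DoFs (two vertex values, $k-2$ interior point values, plus the tangential derivative values at the two vertices); analogously $\nabla(I-\pi)p|_e$ is a polynomial of degree at most $k$ determined by the vertex gradient values and the $k-1$ normal-derivative DoFs on $e$. Norm equivalence on these finite-dimensional polynomial spaces gives
\[
\|(I-\pi)p\|_{0,e}^2
\;\lesssim\;
h_e \sum_{\textrm{edge DoFs on }e} |\textrm{DoF}((I-\pi)p)|^2 \, ,
\]
and an analogous bound for the gradient norm on $e$. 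Each such DoF value equals $p|_E(x)$ (or $\nabla p|_E(x)$) minus the corresponding averaged value over the elements sharing that DoF, which can be written as a linear combination of pointwise jumps $[\![p]\!]_{e'}(x)$ (respectively $[\![\nabla p]\!]_{e'}(x)$) across edges $e' \in \mathcal{F}_E$. The polynomial inverse trace inequality $|q(x)|^2 \lesssim h_{e'}^{-1}\|q\|_{0,e'}^2$, valid for polynomials of bounded degree on $e'$, then converts these pointwise jump values into $L^2(e')$ norms, and mesh regularity $h_{e'} \approx h_e \approx h_E$ collects the correct powers of $h_E$ to match the right-hand side of the claim.

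The main obstacle is the combinatorial bookkeeping at the vertices. When a vertex $v$ of $E$ is shared by several polygons, the difference $p|_E(v) - \pi p(v)$ between the value from $E$ and the arithmetic mean over all adjacent polygons must be expressed as a telescoping sum of single-edge jumps $[\![p]\!]_{e'}(v)$ obtained by walking around $v$ through the fan of elements meeting there, and similarly for the vertex gradient DoFs with $[\![\nabla p]\!]$. This is the standard but delicate step underlying all Oswald-type estimates; mesh assumption \textbf{(A1)} bounds the number of elements incident to $v$ uniformly in $h$, so the implied constant is mesh-independent. Putting everything together gives the stated inequality.
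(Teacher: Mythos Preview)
Your proposal is correct and follows essentially the same route as the paper: reduce to the boundary via the inverse trace proposition for functions in $\Phi_h^k(E)$ with vanishing internal DoFs, then control the boundary terms by the jumps. The paper is actually terser than you are---after the reduction it simply invokes the arguments of \cite{BLT:2024} and \cite{LPT:2024} for the boundary step, whereas you spell out the norm equivalence on edges, the telescoping around vertex fans, and the pointwise-to-$L^2$ conversion; your added verification that $p|_E \in \Phi_h^k(E)$ (so that the inverse trace proposition applies to the difference) is a detail the paper leaves implicit.
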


\begin{proof}
We fix an element $E\in\Omega_h$, and we consdier the difference
\[
d \coloneqq (I - \pi)p \, .
\]
We observe that since the internal degrees of freedom belongs to only one element, the moment of the curl of this function are equal to zero. We can then apply the previous proposition to obtain that
\[
\| d \|_{0,E}
\lesssim 
h_E^{\frac{1}{2}}\| d\|_{0,\partial E}
+
h^{\frac{3}{2}}\|\nabla d\|_{0,\partial E}
\]
We can conclude the proof by applying the same arguments that appears in \cite{BLT:2024} and \cite{LPT:2024}.
\end{proof}

\subsection{Coercivity}

We introduce the norm
\begin{equation*}
\| \vbf_h \|^2_{\mathcal K, E}
\coloneqq
\nu \| \nablabf \vbf_h \|^2_{0,E}
+
\sigma \| \vbf_h \|^2_{0,E}
+
J_h^E(\vbf_h, \vbf_h) \, ,
\end{equation*}
with global counterpart
\begin{equation*}
\| \vbf_h \|^2_{\mathcal K}
\coloneqq
\left(
\sum_{E \in \Omega_h} \| \vbf_h \|^2_{\mathcal K, E}
\right)^{\frac{1}{2}} \, .
\end{equation*}

The well-posedness of \eqref{eq:oseen-discrete-problem} is guaranteed by this coercivity result
\begin{proposition}
Under assumption \textbf{(A1)}, it holds that
\[
\| \vbf_h \|^2_{\mathcal K}
\lesssim
\mathcal K_h(\vbf_h, \vbf_h)
\quad
\forall \vbf_h \in \Vbf_h^k(\Omega_h) \, .
\]
\end{proposition}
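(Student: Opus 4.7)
The plan is to show the local lower bound
\[
\nu \| \nablabf \vbf_h \|^2_{0,E} + \sigma \| \vbf_h \|^2_{0,E} + J_h^E(\vbf_h, \vbf_h) \lesssim \mathcal K_h^E(\vbf_h, \vbf_h)
\]
up to an error term on the diffusive part that has to be controlled after summation through global Korn. The key immediate observation is that $\cskewEh$ is by construction skew-symmetric, hence $\cskewEh(\vbf_h, \vbf_h) = 0$ and $\cskewh$ contributes nothing on the diagonal. Furthermore, $J_h^E(\vbf_h, \vbf_h)$ appears verbatim in both the norm and $\mathcal K_h^E$, so it suffices to dominate the diffusive and reactive pieces of $\|\vbf_h\|^2_{\mathcal K, E}$ by $A_h^E(\vbf_h, \vbf_h)$.

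For the reactive part I would proceed element-wise. Split
\[
\sigma \| \vbf_h \|^2_{0,E} \leq 2\sigma \| \P0 \vbf_h \|^2_{0,E} + 2\sigma \| (I - \P0) \vbf_h \|^2_{0,E}.
\]
Since $\P0 = \Pi^{0,E}_k$ preserves constants, the function $(I - \P0)\vbf_h$ has zero mean on $E$, so Poincar\'e gives
\[
\| (I - \P0)\vbf_h \|_{0,E} \lesssim h_E \| \nablabf (I - \P0)\vbf_h \|_{0,E},
\]
and the lower bound on the VEM stabilization then yields
\[
\sigma \|(I - \P0)\vbf_h\|^2_{0,E} \lesssim \sigma h_E^2 \, S_h^E\bigl((I-\P0)\vbf_h,(I-\P0)\vbf_h\bigr),
\]
which is exactly the reactive piece of $A_h^E(\vbf_h,\vbf_h)$.

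For the diffusive part, the key trick is to insert the $\PZ0P$ projection. Since $\ebf(\P0\vbf_h) \in [\Pk_{k-1}(E)]^{2\times 2}$ is left invariant by $\PZ0P$, we get
\[
(I - \PZ0P)\ebf(\vbf_h) = (I - \PZ0P)\ebf\bigl((I - \P0)\vbf_h\bigr),
\]
so by Pythagoras and boundedness of $\PZ0P$,
\[
\|\ebf(\vbf_h)\|^2_{0,E} = \|\PZ0P \ebf(\vbf_h)\|^2_{0,E} + \|(I - \PZ0P)\ebf(\vbf_h)\|^2_{0,E} \leq \|\PZ0P \ebf(\vbf_h)\|^2_{0,E} + \|\nablabf(I-\P0)\vbf_h\|^2_{0,E}.
\]
Using again the lower bound on $S_h^E$ one gets
\[
\nu \|\ebf(\vbf_h)\|^2_{0,E} \lesssim A_h^E(\vbf_h,\vbf_h).
\]
Finally, summing over $E \in \Omega_h$ and invoking global Korn's inequality on $\vbf_h \in [H^1_0(\Omega)]^2$ gives
\[
\nu \sum_{E \in \Omega_h} \|\nablabf \vbf_h\|^2_{0,E} \lesssim \nu \|\ebf(\vbf_h)\|^2_{0,\Omega_h} \lesssim \sum_{E \in \Omega_h} A_h^E(\vbf_h,\vbf_h),
\]
which, combined with the previous estimates, produces the desired coercivity.

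The main technical point is not any single estimate but the fact that Korn must be applied globally (it fails element-by-element without boundary conditions), so the argument is genuinely not a sum of purely local inequalities for the diffusive part. Everything else is a standard combination of the stabilization bound, Poincar\'e on zero-mean remainders, and the projection identity above.
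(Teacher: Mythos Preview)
Your argument is correct and, in fact, more careful than the paper's own proof. The paper proceeds very tersely: it notes that $\cskewEh(\vbf_h,\vbf_h)=0$ by skew-symmetry, then asserts the \emph{local} bound
\[
A_h^E(\vbf_h,\vbf_h)\ \gtrsim\ \nu\,\|\nablabf\vbf_h\|_{0,E}^2+\sigma\,\|\vbf_h\|_{0,E}^2
\]
as a direct consequence of the stabilization property, and sums over the elements. You instead prove $\nu\,\|\ebf(\vbf_h)\|_{0,E}^2\lesssim A_h^E(\vbf_h,\vbf_h)$ locally via the projection identity $(I-\PZ0P)\ebf(\vbf_h)=(I-\PZ0P)\ebf\bigl((I-\P0)\vbf_h\bigr)$, and only afterwards invoke global Korn on $[H^1_0(\Omega)]^2$ to recover the full gradient. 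That extra step is not cosmetic: the purely local bound claimed in the paper is false as written, since the infinitesimal rigid rotation $\vbf_h(x,y)=(y,-x)$ belongs to $\Vbf_h^k(E)$, satisfies $\P0\vbf_h=\vbf_h$ and $\ebf(\vbf_h)=0$, yet has nonzero gradient. So the paper's argument implicitly relies on the same global Korn step that you make explicit; your version is the rigorous one. The Poincar\'e treatment of the reactive remainder $\sigma\|(I-\P0)\vbf_h\|_{0,E}^2$ is standard and correctly executed.
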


\begin{proof}
We restrict our attention to an element $E \in \Omega_h$. First, we note that since the bilinear form $\cskewEh$ is skew-symmetric, it holds that
\[
\cskewEh(\vbf_h, \vbf_h) = 0 \, .
\]
Thanks to to property of the VEM-stabilization term, we have
\[
A^E_h(\vbf_h, \vbf_h) 
\gtrsim
\nu \, \| \nablabf \vbf_h \|_{0,E}^2
+
\sigma \, \| \vbf_h \|_{0,E}^2 \, .
\]
By definition of the bilinear form $\mathcal{K}_h^E(\cdot,\cdot)$, it is clear that
\[
\mathcal{K}_h^E(\vbf_h, \vbf_h)
\gtrsim
\| \vbf_h \|^2_{\mathcal K, E} \, .
\]
The proof is completed by summing over all the elements $E \in \Omega_h$.
\end{proof}

\subsection{Error analysis}
Before proving the error analysis, we have to introduce some interpolation operators different from the ones that appear in Lemma \ref{lm:oseen-interpolation} and Lemma \ref{lm:oseen-interpolation2}. In particular, we consider as interpolation operator in the space $\Phi_h^k(\Omega_h)$ the $L^2(\Omega)-$orthogonal projection.  

\begin{lemma}\label{lm:intf}
Under asssumption \textbf{(A1)}, for any $\phi \in \Phi(\Omega) \cap H^{s+2}(\Omega_h)$, let $\varphi_{\mathcal I} \in \Phi_h^k(\Omega_h)$ be the $L^2(\Omega)$-orthogonal projection of $\varphi$ into the space $\Phi_h^k(\Omega_h)$. It holds
\[
\sum_{E \in \Omega} \| \varphi - \varphi_{\mathcal{I}}\|^2_{0,E} 
+
\sum_{E \in \Omega} h_E^2| \varphi - \varphi_{\mathcal{I}}|^2_{1,E}
+
\sum_{E \in \Omega} h_E^4| \varphi - \varphi_{\mathcal{I}}|^2_{2,E}
\lesssim
\sum_{E \in \Omega_h} h_E^{2s+4} | \varphi|^2_{s+2,E} \, ,
\]
where $0 < s \leq k$.
\end{lemma}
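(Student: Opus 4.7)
The plan is to combine the global best-approximation property of the $L^2$-projection with the inverse estimates for $\Phi_h^k(\Omega_h)$ proved in Propositions \ref{prp:inv21} and \ref{prp:inv10}. The $L^2$ term falls out immediately from minimality, while the $H^1$ and $H^2$ terms are recovered by comparing $\varphi_{\mathcal I}$ with the quasi-interpolant $\tilde\varphi_{\mathcal I} \in \Phi_h^k(\Omega_h)$ supplied by Lemma \ref{lm:oseen-interpolation2}, and then bounding the virtual element difference through the inverse inequalities.

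First, since $\varphi_{\mathcal I}$ is by definition the global $L^2(\Omega)$-orthogonal projection of $\varphi$ onto $\Phi_h^k(\Omega_h)$, it realizes the minimum of the $L^2(\Omega)$-distance. Testing this minimality against $\tilde\varphi_{\mathcal I}$ and using Lemma \ref{lm:oseen-interpolation2} elementwise yields
\[
\sum_{E \in \Omega_h} \|\varphi - \varphi_{\mathcal I}\|_{0,E}^2
\le
\sum_{E \in \Omega_h} \|\varphi - \tilde\varphi_{\mathcal I}\|_{0,E}^2
\lesssim
\sum_{E \in \Omega_h} h_E^{2s+4}\,|\varphi|_{s+2,E}^2.
\]

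Next, for the $H^1$ and $H^2$ contributions I would insert $\tilde\varphi_{\mathcal I}$ via the triangle inequality, so that for each $E$
\[
|\varphi - \varphi_{\mathcal I}|_{j,E}
\le
|\varphi - \tilde\varphi_{\mathcal I}|_{j,E}
+
|\tilde\varphi_{\mathcal I} - \varphi_{\mathcal I}|_{j,E}, \qquad j = 1,2.
\]
For the first piece, Lemma \ref{lm:oseen-interpolation2} directly gives $h_E^j|\varphi - \tilde\varphi_{\mathcal I}|_{j,E} \lesssim h_E^{s+2}|\varphi|_{s+2,E}$. For the second piece, the key observation is that $\tilde\varphi_{\mathcal I} - \varphi_{\mathcal I} \in \Phi_h^k(\Omega_h)$, so Propositions \ref{prp:inv21}--\ref{prp:inv10} (and the corollary combining them) apply locally and yield
\[
|\tilde\varphi_{\mathcal I} - \varphi_{\mathcal I}|_{j,E}
\lesssim
h_E^{-j}\,\|\tilde\varphi_{\mathcal I} - \varphi_{\mathcal I}\|_{0,E}
\le
h_E^{-j}\bigl(\|\tilde\varphi_{\mathcal I}-\varphi\|_{0,E} + \|\varphi - \varphi_{\mathcal I}\|_{0,E}\bigr).
\]
Multiplying by $h_E^j$, squaring, summing over $E$, and using the already established $L^2$ bound together with Lemma \ref{lm:oseen-interpolation2} for $\|\tilde\varphi_{\mathcal I} - \varphi\|_{0,E}$, I obtain the desired estimate for $j = 1, 2$.

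I do not anticipate real obstacles: the routine arithmetic just consists in checking that the weights $h_E^2$ and $h_E^4$ match the powers produced by the inverse estimates and by Lemma \ref{lm:oseen-interpolation2}. The only point requiring a little care is that $\varphi_{\mathcal I}$ is a \emph{global} projection, so the best-approximation argument must be carried out on the whole domain before being split elementwise (this is fine because $\|\cdot\|_{0,\Omega}^2 = \sum_E \|\cdot\|_{0,E}^2$). Once the $L^2$ estimate is in place, the inverse inequalities do all the remaining work, and the full bound is assembled by summing the three contributions.
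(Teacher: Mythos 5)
Your proposal is correct and follows essentially the same route as the paper: the global best-approximation property of the $L^2$-projection tested against the interpolant $\tilde\varphi_{\mathcal I}$ of Lemma \ref{lm:oseen-interpolation2} gives the $L^2$ bound, and the $H^1$ and $H^2$ terms are then handled by the triangle inequality through $\tilde\varphi_{\mathcal I}$ combined with the inverse estimates of Propositions \ref{prp:inv10} and \ref{prp:inv21} applied to the virtual difference $\tilde\varphi_{\mathcal I}-\varphi_{\mathcal I}\in\Phi_h^k(\Omega_h)$. Your closing remark about carrying out the best-approximation step globally before splitting elementwise is exactly the point the paper exploits as well, so there is nothing to amend.
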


\begin{proof}
Since $\varphi_{\mathcal{I}}$ is the best possible approximation with respect to the $L^2$-norm in $\Phi_h^k(\Omega_h)$, we have that
\begin{equation}\label{eq:inv-prel1}
\sum_{E \in \Omega_h} \| \varphi - \varphi_{\mathcal{I}}\|^2_{0,E} 
= 
\| \varphi - \varphi_{\mathcal{I}} \|^2_{0,\Omega}
\leq
\| \varphi - \tilde \varphi_{\mathcal{I}} \|^2_{0,\Omega}
=
\sum_{E \in \Omega_h} \| \varphi - \tilde \varphi_{\mathcal{I}}\|^2_{0,E} 
\leq
\sum_{E \in \Omega_h} h_E^{2s+4} |\varphi|^2_{s+2,E} \, ,
\end{equation}
where $\tilde \varphi_{\mathcal{I}}$ is the interpolant of Lemma \ref{lm:oseen-interpolation2}. 
To prove the results for the $H^1(\Omega_h)$-seminorm, we use triangular inequality, the inverse estimate in Proposition \ref{prp:inv10}, and \eqref{eq:inv-prel1}. We obtain
\[
\begin{aligned}
\sum_{E \in \Omega_h} h_E^2 | \varphi - \varphi_{\mathcal{I}}|^2_{1,E}
&\leq
\sum_{E \in \Omega_h} h_E^2 | \varphi -  \tilde\varphi_{\mathcal{I}}|^2_{1,E}
+
\sum_{E \in \Omega_h} h_E^2 | \tilde\varphi_{\mathcal{I}} - \varphi_{\mathcal{I}}|^2_{1,E} \\
&\lesssim
\sum_{E \in \Omega_h} h_E^{2s+4}|\varphi|^2_{s+2,E}
+
\sum_{E \in \Omega_h} \| \tilde\varphi_{\mathcal{I}} - \varphi_{\mathcal{I}}\|^2_{0,E} \\
&\leq
\sum_{E \in \Omega_h} h_E^{2s+4}|\varphi|^2_{s+2,E}
+
\sum_{E \in \Omega_h} \| \varphi - \tilde\varphi_{\mathcal{I}}\|^2_{0,E}
+
\sum_{E \in \Omega_h} \| \varphi - \varphi_{\mathcal{I}}\|^2_{0,E} \\
& \lesssim \sum_{E \in \Omega_h} h_E^{2s+4}|\varphi|^2_{s+2,E} \, .
\end{aligned}
\]
In order to prove the analogous result for the $H^2(\Omega_h)$-seminrom, we use a similar procedure.
\end{proof}

\begin{lemma} \label{lm:intf2}
Under assumption \textbf{(A1)}, for any $\vbf \in \Zbf(\Omega) \cap [H^{s+1}(\Omega_h)]^2$, let $\varphi$ such that $\textnormal{\textbf{curl}}(\varphi) = \vbf$ and let $\varphi_{\mathcal{I}}$ be the interpolant of $\varphi$ defined in Lemma \ref{lm:intf}.
We define $\vbf_{\mathcal{I}} \eqqcolon \textnormal{\textbf{curl}}(\varphi_\mathcal{I})$, it holds
\[
\sum_{E \in \Omega} h_E^2 \,\| \vbf - \vbf_{\mathcal{I}}\|^2_{0,E} 
+
\sum_{E \in \Omega} h_E^4 \,| \vbf - \vbf_{\mathcal{I}}|^2_{1,E}
\lesssim
\sum_{E \in \Omega_h} h_E^{2s+4} \,| \vbf|^2_{s+1,E} \, ,
\]
where $0 < s \leq k$.
\end{lemma}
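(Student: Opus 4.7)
The plan is to reduce the lemma to the previous one (Lemma \ref{lm:intf}) by exploiting the identity $\vbf - \vbf_{\mathcal{I}} = \textbf{curl}(\varphi - \varphi_{\mathcal{I}})$ and the fact that in two dimensions the scalar \textbf{curl} operator differs from the gradient only by a rotation, hence preserves $L^2$-based seminorms.

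First, I would write, for every $E \in \Omega_h$,
\[
\| \vbf - \vbf_{\mathcal{I}} \|_{0,E} = \| \textbf{curl}(\varphi - \varphi_{\mathcal{I}}) \|_{0,E} = | \varphi - \varphi_{\mathcal{I}} |_{1,E},
\]
and similarly
\[
| \vbf - \vbf_{\mathcal{I}} |_{1,E} = | \textbf{curl}(\varphi - \varphi_{\mathcal{I}}) |_{1,E} = | \varphi - \varphi_{\mathcal{I}} |_{2,E}.
\]
Multiplying the first identity by $h_E^2$ and the second by $h_E^4$, summing over $E \in \Omega_h$, and invoking Lemma \ref{lm:intf}, I obtain
\[
\sum_{E \in \Omega_h} h_E^2 \, \| \vbf - \vbf_{\mathcal{I}} \|_{0,E}^2 + \sum_{E \in \Omega_h} h_E^4 \, | \vbf - \vbf_{\mathcal{I}} |_{1,E}^2 \lesssim \sum_{E \in \Omega_h} h_E^{2s+4} \, | \varphi |_{s+2,E}^2.
\]

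Finally, I would translate the bound from $\varphi$ back to $\vbf$ using Lemma \ref{lm:stab} applied with regularity $s+1$, which yields $|\varphi|_{s+2,\Omega_h} \lesssim |\vbf|_{s+1,\Omega_h}$, giving the desired estimate. Since the steps are essentially algebraic manipulations combined with two previously established results, there is no substantial obstacle; the only point that deserves care is checking that the interpolant $\varphi_{\mathcal{I}}$ from Lemma \ref{lm:intf} is applied to a stream function whose regularity comes from Lemma \ref{lm:stab}, so that one has $|\varphi|_{s+2,E}$ controlled by $|\vbf|_{s+1,E}$ elementwise (or at least after summation). If a strictly elementwise bound on $|\varphi|_{s+2,E}$ is unavailable from Lemma \ref{lm:stab} as stated (only a broken-norm bound), I would simply keep the summed version, which is exactly the form appearing in the statement of Lemma \ref{lm:intf2}.
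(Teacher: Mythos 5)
Your proof is correct and is exactly the paper's argument, which is stated there in a single line as ``It is sufficient to combine Lemma \ref{lm:stab} with Lemma \ref{lm:intf}'': the identity $\vbf - \vbf_{\mathcal I} = \textbf{curl}(\varphi - \varphi_{\mathcal I})$ and the two-dimensional equivalence of $\textbf{curl}$ and gradient seminorms reduce everything to Lemma \ref{lm:intf}, after which Lemma \ref{lm:stab} converts $|\varphi|_{s+2}$ into $|\vbf|_{s+1}$. Your closing caveat is well taken: Lemma \ref{lm:stab} only provides a broken-norm (summed) bound, so the elementwise weights $h_E$ genuinely factor out only under quasi-uniformity --- a point the paper itself concedes only in the remark following the lemma, despite stating the result under \textbf{(A1)} alone.
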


\begin{proof}
It is sufficient to combine Lemma \ref{lm:stab} with  Lemma \ref{lm:intf}.
\end{proof}

\begin{remark}
We must impose the quasi-uniformity assumption due to the non-local nature of these interpolation operators. In fact, the interpolant defined in Lemma \ref{lm:intf} is the global orthogonal projection operator.
\end{remark}

To analyze the method, and obtain the optimal rate of convergence we make the following regularity assumptions: \\
\textbf{(A2) Regularity assumption.} It holds that
\[
\begin{aligned}
\ubf \in H^{k+1}(&\Omega_h) \, , \quad 
p \in H^k{(\Omega_h)} \, , \quad
\bb \in W^{3}_{\infty}(\Omega) \cap H^{k+1}(\Omega_h) \, ,\\
&\fbf \in H^{k+1}(\Omega_h) \, , \quad 
(\nabla u)\bb \in H^{\frac{5}{2} + \epsilon}(\Omega) \, ,
\end{aligned}
\]
where $\epsilon > 0$. Furthemore, we need to consider this extra assumption: \\
\textbf{(A1+) Mesh assumption.} Under the mesh assumptions \textbf{(A1)}, we suppose that
\begin{itemize}
    \item the mesh is quasi-uniform: it holds $h \leq \rho h_E$ for all $E \in \Omega_h$.
\end{itemize}
We also introduce the quantities
\[
\lambda_E \coloneqq \max\left\{\nu, \sigma \,  h_E^2 \right\}
\quad \text{and} \quad
\lambda \coloneqq \max_{E \in \Omega_h} \{ \lambda_E \} \, .
\]

\begin{proposition}\label{prp:abs-vel}
Let $\ubf$ be the solution of \eqref{eq:oseen-strong}, and let $\ubf_h$ be the solution of the discrete problem \eqref{eq:oseen-discrete-problem}. It holds that
\[
\| \ubf - \ubf_h \|_{\mathcal K} 
\lesssim
\| \errbf_\mathcal{I} \|_{\mathcal{K}}
+
\frac{\eta_f
+
\eta_A
+
\eta_c
+
\eta_J}{\|\errbf_h\|_{\mathcal{K}}}
\, ,
\]
where we have defined 
\[
\begin{aligned}
    \eta_f &\coloneqq | \mathcal{F}(\errbf_h) - \mathcal{F}_h(\errbf_h) | \, , \\
    \eta_A &\coloneqq | A(\ubf,\errbf_h) - A_h(\ubf_{\mathcal{I}},\errbf_h) | \, , \\
    \eta_c &\coloneqq | c^{\text{skew}}(\ubf,\errbf_h) - c^{\text{skew}}_h(\ubf_{\mathcal{I}},\errbf_h) |\, ,\\
    \eta_J &\coloneqq | J_h(\ubf_{\mathcal{I}},\errbf_h) | \, ,
\end{aligned}
\]
and
\[\errbf_\mathcal I \coloneqq \ubf -  \ubf_\mathcal{I}
\qquad
\errbf_h \coloneqq \ubf_h -  \ubf_\mathcal{I} \, .\]
\end{proposition}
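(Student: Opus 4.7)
The plan is a fairly standard Strang-type argument, broken into three pieces: triangle inequality, coercivity, and a consistency split.

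First, I would write
\[
\| \ubf - \ubf_h \|_{\mathcal{K}} \leq \| \ubf - \ubf_{\mathcal{I}} \|_{\mathcal{K}} + \| \ubf_{\mathcal{I}} - \ubf_h \|_{\mathcal{K}} = \| \errbf_{\mathcal{I}} \|_{\mathcal{K}} + \| \errbf_h \|_{\mathcal{K}},
\]
so the only object I really need to control is $\| \errbf_h \|_{\mathcal{K}}$. Before doing so, I would observe the crucial fact that $\errbf_h \in \Zbf_h(\Omega_h)$: indeed $\ubf_h$ is discretely divergence-free by construction, and $\ubf_{\mathcal{I}} = \textnormal{\textbf{curl}}(\varphi_{\mathcal{I}})$ with $\varphi_{\mathcal{I}} \in \Phi_h^k(\Omega_h)$, so the exactness of the discrete subcomplex forces $\Div(\ubf_{\mathcal{I}}) = 0$ in the discrete sense.

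Next, I apply the coercivity proposition to $\errbf_h$ to get $\| \errbf_h \|_{\mathcal{K}}^2 \lesssim \mathcal{K}_h(\errbf_h, \errbf_h)$, and then rewrite the right-hand side linearly in the first slot as
\[
\mathcal{K}_h(\errbf_h, \errbf_h) = \mathcal{K}_h(\ubf_h, \errbf_h) - \mathcal{K}_h(\ubf_{\mathcal{I}}, \errbf_h).
\]
For the first term I use the discrete equation \eqref{eq:oseen-discrete-problem} tested with $\errbf_h$: since $\errbf_h \in \Zbf_h(\Omega_h)$, the pressure contribution $b(\errbf_h, p_h)$ vanishes, giving $\mathcal{K}_h(\ubf_h, \errbf_h) = \mathcal{F}_h(\errbf_h)$. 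For the continuous counterpart, since $\errbf_h \in \Zbf_h(\Omega_h) \subseteq \Zbf(\Omega)$, the exact equation \eqref{eq:oseen-variational-equation} tested with $\errbf_h$ makes $b(\errbf_h, p) = 0$ as well, so $\mathcal{K}(\ubf, \errbf_h) = \mathcal{F}(\errbf_h)$. Substituting $\mathcal{F}_h(\errbf_h) = \bigl(\mathcal{F}_h(\errbf_h) - \mathcal{F}(\errbf_h)\bigr) + \mathcal{K}(\ubf, \errbf_h)$ yields
\[
\mathcal{K}_h(\errbf_h,\errbf_h) = \bigl(\mathcal{F}_h(\errbf_h) - \mathcal{F}(\errbf_h)\bigr) + \bigl(A(\ubf,\errbf_h) - A_h(\ubf_{\mathcal{I}},\errbf_h)\bigr) + \bigl(c^{\text{skew}}(\ubf,\errbf_h) - c_h^{\text{skew}}(\ubf_{\mathcal{I}},\errbf_h)\bigr) - J_h(\ubf_{\mathcal{I}},\errbf_h).
\]

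Taking absolute values and using the definitions of $\eta_f$, $\eta_A$, $\eta_c$, $\eta_J$, I obtain $\| \errbf_h \|_{\mathcal{K}}^2 \lesssim \eta_f + \eta_A + \eta_c + \eta_J$, and dividing by $\| \errbf_h \|_{\mathcal{K}}$ gives the desired bound on $\| \errbf_h \|_{\mathcal{K}}$ which, combined with the triangle inequality above, finishes the proof. The only delicate step I expect is the verification that $\errbf_h$ lies in the discrete kernel so that both pressure terms drop out; everything else is algebraic manipulation and the already-proven coercivity.
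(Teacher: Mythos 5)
Your proposal is correct and follows essentially the same route as the paper: triangle inequality, the coercivity proposition applied to $\errbf_h$, and the insertion of the discrete and continuous equations to produce the four consistency terms. The only difference is that you explicitly verify $\errbf_h \in \Zbf_h(\Omega_h) \subseteq \Zbf(\Omega)$ (via $\ubf_\mathcal{I} = \textnormal{\textbf{curl}}(\varphi_\mathcal{I})$ and the exactness of the discrete complex) so that both pressure terms $b(\errbf_h, p_h)$ and $b(\errbf_h, p)$ vanish --- a step the paper's proof uses implicitly without comment, so your write-up is if anything more complete.
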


\begin{proof}
Using triangular inequality, it holds that
\[
\| \ubf - \ubf_h \|_{\mathcal K}
\lesssim
\| \ubf - \ubf_\mathcal{I} \|_{\mathcal K}
+
\| \ubf_h - \ubf_\mathcal{I} \|_{\mathcal K} 
=
\| \errbf_\mathcal{I} \|_{\mathcal K}
+
\| \errbf_h \|_{\mathcal K}
\, .
\]
Exploiting the coercivity of the bilinear form, we have that
\[
\begin{aligned}
\| \errbf_h \|^2_{\mathcal K}
\lesssim
\mathcal{K}_h( \errbf_h, \errbf_h)
=
\mathcal{K}_h( \ubf_h - \ubf_\mathcal{I}, \errbf_h)
&=
\mathcal{F}_h(\errbf_h)
- 
\mathcal{K}_h(\ubf_\mathcal{I}, \errbf_h) \\
&=
\mathcal{F}_h(\errbf_h)
-
\mathcal{F}(\errbf_h)
+
\mathcal{K}(\ubf, \errbf_h)
- 
\mathcal{K}_h(\ubf_\mathcal{I}, \errbf_h) \, .
\end{aligned}
\]
We conclude by recalling the definitions of $\mathcal{K}(\cdot, \cdot)$ and $\mathcal{K}_h(\cdot, \cdot)$ \eqref{eq:k} and \eqref{eq:kh}.
\end{proof}
We now proceed to estimate the terms that appear in the previous proposition. We begin by estimating the interpolation error.
\begin{lemma}[Estimate of $\| \errbf_{\mathcal{I}} \|_{\mathcal K}$] \label{lm:ei}
Under assumptions \textbf{(A1+)} and \textbf{(A2)}, it holds that
\[
\| \errbf_{\mathcal{I}} \|^2_{\mathcal{K}} 
\lesssim \sum_{E \in \Omega_h}
\lambda_E \, h_E^{2k} \, | \ubf |^2_{k+1,E}
+
\sum_{E \in \Omega_h}
\delta \| \bb \|^2_{[W^2_{\infty}(\Omega_h)]^2} h_E^{2k+1} | \ubf |^2_{k+1,E} \, .
\]
\end{lemma}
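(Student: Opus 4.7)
The plan is to split the local norm $\|\errbf_{\mathcal{I}}\|^2_{\mathcal{K},E}$ into three additive pieces: the diffusive contribution $\nu\|\nablabf \errbf_{\mathcal{I}}\|^2_{0,E}$, the reactive contribution $\sigma\|\errbf_{\mathcal{I}}\|^2_{0,E}$, and the full jump form $J_h^E(\errbf_{\mathcal{I}}, \errbf_{\mathcal{I}})$. For the first two I would simply invoke Lemma \ref{lm:intf2} with $s = k$, obtaining $\nu h_E^{2k}|\ubf|^2_{k+1,E}$ and $\sigma h_E^{2k+2}|\ubf|^2_{k+1,E}$ respectively; the definition $\lambda_E = \max\{\nu,\sigma h_E^2\}$ then packages both as $\lambda_E h_E^{2k}|\ubf|^2_{k+1,E}$, which is the first contribution in the claim.

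For the jump form, the key observation is that, under the regularity assumption \textbf{(A2)}, the quantities $(\nablabf\ubf)\bb\cdot\mathbf t^E$, $\mathcal{B}\ubf$, and $\nabla\mathcal{B}\ubf$ all have vanishing jumps across interior edges, as recorded in the Remark of Section 3. Hence, in each of $J_h^{E,i}(\errbf_{\mathcal{I}},\errbf_{\mathcal{I}})$, I can freely insert $\ubf$ and rewrite the jump as a difference of two jumps: one involving $(\Pi_k^0\ubf - \ubf)$ (the projection error on the smooth function) and one involving $(\Pi_k^0\ubf_{\mathcal{I}} - \ubf)$. For the first difference I apply the trace inequality on each $e\subset\partial E$ together with Lemma \ref{lm:bh} for the Sobolev seminorms of $\Pi_k^0\ubf - \ubf$ (needing at most three derivatives, available for $k\ge 2$), and check that multiplication by the appropriate $h_E^{2i}$ and the $\bb$-factor produces exactly $\|\bb\|^2_{[W^{i-1}_\infty(E)]^2} h_E^{2k+1}|\ubf|^2_{k+1,E}$, with the worst $\bb$-dependence being $\|\bb\|^2_{[W^2_\infty(E)]^2}$ for $i = 3$.

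For the second difference I split $\Pi_k^0\ubf_{\mathcal{I}} - \ubf = (\Pi_k^0\ubf - \ubf) + \Pi_k^0(\ubf_{\mathcal{I}} - \ubf)$. The first addend is already controlled. The second is a polynomial of degree $k$, so I combine a polynomial inverse trace inequality on the edge, a polynomial inverse estimate on $E$, the $L^2$-stability of $\Pi_k^0$, and finally $\|\errbf_{\mathcal{I}}\|_{0,E}\lesssim h_E^{k+1}|\ubf|_{k+1,E}$ from Lemma \ref{lm:intf2}. The powers of $h_E$ balance so that the contribution scales as $h_E^{2k+1}|\ubf|^2_{k+1,E}$ as well. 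The residual VEM-stabilization piece $\delta h_E S^E_h((I-\Pi_k^0)\errbf_{\mathcal{I}},(I-\Pi_k^0)\errbf_{\mathcal{I}})$ is dominated by $\delta h_E|\errbf_{\mathcal{I}}|^2_{1,E}$ via continuity of the stabilization and of $\Pi_k^0$, again yielding $\delta h_E^{2k+1}|\ubf|^2_{k+1,E}$. Summing and using $\delta\ge\delta_i$ produces the second contribution in the claim.

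The main obstacle is the bookkeeping for $J_h^{E,3}$: the operator $\nabla\mathcal{B}$ mixes up to three derivatives of the polynomial $\Pi_k^0\errbf_{\mathcal{I}}$ with up to two derivatives of $\bb$, and a naive trace estimate on $\nabla\mathcal{B}(\Pi_k^0\ubf - \ubf)$ would require $|\ubf|_{4,E}$, which is not available for $k=2$. The route above circumvents this by systematically applying polynomial inverse estimates on the $\Pi_k^0$ side, so that the needed regularity of $\ubf$ never exceeds $H^{k+1}(\Omega_h)$ and the $\bb$-factor in the final bound is indeed $\|\bb\|_{[W^2_\infty(\Omega_h)]^2}$ and no more.
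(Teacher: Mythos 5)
Your overall skeleton matches the paper at the outer level: you split $\|\errbf_{\mathcal I}\|^2_{\mathcal K,E}$ into diffusive, reactive, jump and VEM--stabilization pieces, absorb the first two into $\lambda_E h_E^{2k}|\ubf|^2_{k+1,E}$ via Lemma \ref{lm:intf2} and the definition of $\lambda_E$, and your treatment of the stabilization part ($\delta h_E |(I-\P0)\errbf_{\mathcal I}|^2_{1,E}\lesssim \delta h_E^{2k+1}|\ubf|^2_{k+1,E}$) is exactly the paper's. The divergence is in the jump levels, and there it contains a genuine gap. The paper never inserts the exact solution in this lemma: since \emph{both} arguments of $J_h^{E,i}$ are the error $\errbf_{\mathcal I}$, the quantity inside the jump is already small, so the paper bounds the jump by the two one-sided traces, applies the \emph{polynomial} trace inequality ($h_E^{-1/2}$ times the bulk norm), and then uses polynomial inverse estimates to push every derivative beyond the first off the polynomial $\Pg\errbf_{\mathcal I}$, ending with $\|\nablabf \Pg \errbf_{\mathcal I}\|_{0,E}\lesssim h_E^k|\ubf|_{k+1,E}$; the prefactors $h_E^2, h_E^4, h_E^6$ then balance the $h_E^{-1}$ from the trace and the $h_E^{-2}, h_E^{-4}$ from the inverse estimates to give $h_E^{2k+1}$ at every level, with $\|\bb\|_{[W^2_\infty]^2}$ from the derivatives of $\bb$ in $\nabla\mathcal B$. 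No vanishing-jump property, and no regularity of $(\nablabf\ubf)\bb$, is used here.

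Your route instead inserts $\ubf$ via the Remark and splits each jump into a piece built from $\P0\ubf-\ubf$ and a polynomial piece. The polynomial piece $\Pi^{0,E}_k\errbf_{\mathcal I}$ is fine and is handled exactly as above. The breakdown is in the piece $\nabla\mathcal B(\P0\ubf-\ubf)$ of $J_h^{E,3}$: its smallness comes only from the difference with the \emph{non-polynomial} function $\ubf$, so ``polynomial inverse estimates on the $\Pi^0_k$ side'' are not available there --- an inverse estimate applied to $\P0\ubf$ alone yields $\|\nablabf\P0\ubf\|_{0,E}$, which is $O(1)$ and carries no decay --- while the standard trace inequality applied to the difference requires the $H^1$-seminorm $|\nabla\mathcal B(\P0\ubf-\ubf)|_{1,E}$, i.e.\ $|\ubf|_{4,E}$, precisely the term you yourself flagged as unavailable for $k=2$. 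So the circumvention you describe does not close the case $k=2$, $i=3$. The clean repair is to undo the insertion altogether: recombining your two pieces cancels $\ubf$ and gives back the jump of $\nabla\mathcal B\,\Pg\errbf_{\mathcal I}$, the paper's direct object, which needs no regularity beyond $H^{k+1}$. The vanishing-jump insertion is reserved in the paper for Lemma \ref{lm:etaJ}, where the first argument of $J_h$ is $\ubf_{\mathcal I}$ itself rather than an error quantity, and inserting $\ubf$ is then genuinely necessary to create smallness.
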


\begin{proof}
Using Lemma \ref{lm:intf2}, we obtain that
\[
\sum_{E \in \Omega_h} \sigma \| \errbf_{\mathcal{I}} \|^2_{0,E}
\lesssim
\sum_{E \in \Omega_h} 
\sigma \, h_E^{2k+2} \, | \ubf |^2_{k+1,E}
\leq
\sum_{E \in \Omega_h} 
\lambda_E \, h_E^{2k} \, | \ubf |^2_{k+1,E} \, .
\]
Similarly, using the quasi-uniformity assumption, we obtain
\[
\sum_{E \in \Omega_h} \nu \| \nablabf \errbf_{\mathcal{I}} \|^2_{0,E}
\lesssim
\sum_{E \in \Omega_h} 
\lambda_E \, h_E^{2k} \, | \ubf |^2_{k+1,E} \, .
\]
We have to control the part of the norm related to the jump bilinear form. The first level of jumps is estimated by the regularity of $\bb$, a polynomial trace inequality, and Lemma \ref{lm:intf2}
\[
\begin{aligned}
\delta_1 \sum_{E \in \Omega_h} J_h^{E,1}(\errbf_\mathcal{I}, \errbf_\mathcal{I}) 
&\leq
\sum_{E \in \Omega_h} \delta_1 h_E^2 \bigl \| \bigl[\!\!\bigl[\bigl( \nablabf \P0 \errbf_{\mathcal I} \bigr) \bb \bigr ]\!\! \bigr] \bigr \|_{0,\partial E}^2 \\
&\lesssim
\sum_{E \in \Omega_h} \delta_1 h_E \, \| \bb \|^2_{[L^\infty(\Omega)]^2} \bigl \| \nablabf \P0 \errbf_{\mathcal I}\bigr \|_{0, E}^2 \\
& \lesssim
\sum_{E \in \Omega_h}\delta_1 \| \bb \|^2_{[L^\infty(\Omega)]^2} h_E^{2k+1} |u|^2_{k+1,E} \, .
\end{aligned}
\]
Similarly, we estimate the other two levels of jumps. It remains to control the VEM stabilization part of $J_h(\cdot,\cdot).$ 
Using Lemma \ref{lm:bh}, we have that
\[
\begin{aligned}
\sum_{E \in \Omega_h}\delta \, h_E \, S_h^E \bigl( (I-\P0) \errbf_{\mathcal{I}}, (I-\P0) \errbf_{\mathcal{I}}\bigr)
& \lesssim
\sum_{E \in \Omega_h}\delta \, h_E | (I-\P0) \errbf_{\mathcal{I}} |^2_{1,E} \\
& \lesssim
\sum_{E \in \Omega_h}\delta \, h_E^{2k+1} | \ubf |^2_{k+1,E} \, .
\end{aligned}
\]
\end{proof}

\begin{lemma}[Estimate of $\eta_f$]\label{lm:etaf} Under assumptions \textbf{(A1+)} and \textbf{(A2)}, it holds that
\[
\eta_f
\lesssim
\left( \sum_{E \in \Omega_h} \lambda_E^{-1} \,  h_E^{2k+4} \,  | \fbf |^2_{k+1,E} \right)^\frac{1}{2} \| \errbf_h \|_{\mathcal K} \, .
\]
\end{lemma}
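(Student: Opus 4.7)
The plan is to exploit the $[L^2(E)]^2$-orthogonality built into $\P0$: since $\mathcal F_h$ tests $\fbf$ against $\P0\vbf_h$ rather than against $\vbf_h$ itself, I can upgrade the consistency error in $\fbf$ by one polynomial order in $h$. First I would rewrite
\begin{equation*}
\mathcal F(\errbf_h)-\mathcal F_h(\errbf_h)=\sum_{E\in\Omega_h}\int_E \fbf\cdot(\errbf_h-\P0\errbf_h)\,{\rm d}E,
\end{equation*}
and then subtract $\P0\fbf$ inside each integral at no cost: indeed $\int_E \qq\cdot(\errbf_h-\P0\errbf_h)\,{\rm d}E=0$ for every $\qq\in[\Pk_k(E)]^2$ by the defining orthogonality of $\P0$, so
\begin{equation*}
\mathcal F(\errbf_h)-\mathcal F_h(\errbf_h)=\sum_{E\in\Omega_h}\int_E (\fbf-\P0\fbf)\cdot(\errbf_h-\P0\errbf_h)\,{\rm d}E.
\end{equation*}

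Next I would apply element-wise Cauchy--Schwarz. Lemma \ref{lm:bh} bounds the first factor by $\|\fbf-\P0\fbf\|_{0,E}\lesssim h_E^{k+1}|\fbf|_{k+1,E}$. For the virtual factor I would rely on two complementary bounds: the trivial one $\|\errbf_h-\P0\errbf_h\|_{0,E}\leq\|\errbf_h\|_{0,E}$, and a Poincar\'e-type bound $\|\errbf_h-\P0\errbf_h\|_{0,E}\lesssim h_E\|\nablabf\errbf_h\|_{0,E}$, obtained by comparing $\errbf_h\in[H^1(E)]^2$ to its $L^2(E)$-average (which lies in $[\Pk_0(E)]^2\subset[\Pk_k(E)]^2$) and invoking the best-approximation property of $\P0$. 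Splitting according to whether $\lambda_E=\nu$ or $\lambda_E=\sigma h_E^2$: in the diffusion-dominated case the Poincar\'e bound and $\nu\|\nablabf\errbf_h\|^2_{0,E}\leq\|\errbf_h\|^2_{\mathcal K,E}$ give $\|\errbf_h-\P0\errbf_h\|^2_{0,E}\lesssim h_E^2\nu^{-1}\|\errbf_h\|^2_{\mathcal K,E}=h_E^2\lambda_E^{-1}\|\errbf_h\|^2_{\mathcal K,E}$, while in the reaction-dominated case the trivial bound and $\sigma\|\errbf_h\|^2_{0,E}\leq\|\errbf_h\|^2_{\mathcal K,E}$ give $\|\errbf_h-\P0\errbf_h\|^2_{0,E}\leq\sigma^{-1}\|\errbf_h\|^2_{\mathcal K,E}=h_E^2\lambda_E^{-1}\|\errbf_h\|^2_{\mathcal K,E}$. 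In both situations, uniformly,
\begin{equation*}
\|\errbf_h-\P0\errbf_h\|^2_{0,E}\lesssim h_E^2\,\lambda_E^{-1}\,\|\errbf_h\|^2_{\mathcal K,E}.
\end{equation*}

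A final discrete Cauchy--Schwarz over the elements then produces
\begin{equation*}
\eta_f\lesssim\Bigl(\sum_{E\in\Omega_h}\lambda_E^{-1}h_E^{2k+4}|\fbf|^2_{k+1,E}\Bigr)^{1/2}\|\errbf_h\|_{\mathcal K},
\end{equation*}
which is exactly the stated inequality. The only non-routine step is the Poincar\'e-type bound $\|\errbf_h-\P0\errbf_h\|_{0,E}\lesssim h_E\|\nablabf\errbf_h\|_{0,E}$ for a virtual function, but it follows from the Bramble--Hilbert inequality applied to $\errbf_h\in[H^1(E)]^2$ together with the best-approximation property of $\P0$ in $[\Pk_k(E)]^2$; I do not expect any real obstacle there.
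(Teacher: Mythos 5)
Your proposal is correct and follows essentially the same route as the paper's proof: move the projection onto $\fbf$ via the $L^2$-orthogonality of $\P0$, apply Cauchy--Schwarz, bound $\|(I-\P0)\fbf\|_{0,E}$ by Lemma \ref{lm:bh}, and control the virtual factor by $\|(I-\P0)\errbf_h\|^2_{0,E}\lesssim \min\{\sigma^{-1},\nu^{-1}h_E^2\}\,\|\errbf_h\|^2_{\mathcal K,E}=h_E^2\,\lambda_E^{-1}\,\|\errbf_h\|^2_{\mathcal K,E}$, which the paper states as a $\min$ where you perform an explicit case split on $\lambda_E$. The Poincar\'e-type bound you flag as the only non-routine step is in fact already available as Lemma \ref{lm:bh} with $s=1$, $m=0$ (best approximation by the constant average), so there is no gap.
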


\begin{proof}
Exploiting the orthogonality of $\P0$, and using Lemma \ref{lm:bh}, we have that
\[
\begin{aligned}
\eta_f
&=
\sum_{E \in \Omega_h}\bigl( ( I-\P0) \fbf, \errbf_h \bigr)_{0,E}
=
\sum_{E \in \Omega_h}\bigl( ( I-\P0) \fbf, ( I-\P0) \errbf_h \bigr)_{0,E} \\
&\leq 
\left(\sum_{E \in \Omega_h}\| ( I-\P0) \fbf \|^2_{0,E}\right)^\frac{1}{2}
\left(\sum_{E \in \Omega_h}\| ( I-\P0) \errbf_h \|^2_{0,E}\right)^\frac{1}{2} \\
&\lesssim
\left(\sum_{E \in \Omega_h}\min\left\{ \sigma^{-1} , \nu^{-1}h_E^2\right\}  h_E^{2k+2} |\fbf |^2_{k+1,E} \right)^\frac{1}{2} \left(\sum_{E \in \Omega_h}\| \errbf_h \|^2_{\mathcal K,E}\right)^\frac{1}{2} \\
&\lesssim
\left(\sum_{E \in \Omega_h}  \lambda_E^{-1} \,{h_E^{2k+4}} \, | \fbf |^2_{k+1,E}\right)^\frac12 \| \errbf_h \|_{\mathcal K}\, .
\end{aligned}
\]
\end{proof}

\begin{lemma}[Estimate of $\eta_A$]\label{lm:etaA} Under assumptions \textbf{(A1+)} and \textbf{(A2)}, it holds that
\[
\eta_A
\lesssim
\left( \sum_{E \in \Omega_h} \lambda_E \,   h_E^{2k} \,  | \ubf |^2_{k+1,E} \right)^\frac12 \| \errbf_h \|_{\mathcal K} \, .
\]
\end{lemma}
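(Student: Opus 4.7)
The plan is to estimate $\eta_A$ element by element by introducing on each $E \in \Omega_h$ a local polynomial $\ubf_\pi \in [\Pk_k(E)]^2$ that is an optimal approximation of $\ubf$ (e.g. a truncated Taylor polynomial on the ball $B_E$). Writing
$$
\eta_A \leq \sum_{E \in \Omega_h} \bigl|A^E(\ubf, \errbf_h) - A_h^E(\ubf_{\mathcal I}, \errbf_h)\bigr|,
$$
I would split each local summand as
$$
A^E(\ubf, \errbf_h) - A_h^E(\ubf_{\mathcal I}, \errbf_h) = A^E(\ubf - \ubf_\pi, \errbf_h) + \bigl[A^E(\ubf_\pi, \errbf_h) - A_h^E(\ubf_\pi, \errbf_h)\bigr] + A_h^E(\ubf_\pi - \ubf_{\mathcal I}, \errbf_h).
$$

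The middle bracket vanishes by polynomial consistency of $A_h^E$. Indeed, $\ubf_\pi \in [\Pk_k(E)]^2$ implies $\ebf(\ubf_\pi) \in [\Pk_{k-1}(E)]^{2\times 2}$, so $\PZ0P \ebf(\ubf_\pi) = \ebf(\ubf_\pi)$; similarly $\P0 \ubf_\pi = \ubf_\pi$ and $(I - \P0)\ubf_\pi = 0$, which kills the stabilization contribution. The leftover piece $\sigma\int_E \ubf_\pi \cdot (I - \P0)\errbf_h$ vanishes by $L^2$-orthogonality of $\P0$ against $\ubf_\pi$. The first term is controlled by Cauchy--Schwarz together with the standard polynomial approximation estimates $|\ubf - \ubf_\pi|_{1,E} \lesssim h_E^k |\ubf|_{k+1,E}$ and $\|\ubf - \ubf_\pi\|_{0,E} \lesssim h_E^{k+1}|\ubf|_{k+1,E}$, yielding
$$
|A^E(\ubf - \ubf_\pi, \errbf_h)| \lesssim \bigl(\nu h_E^{2k} + \sigma h_E^{2k+2}\bigr)^{1/2} |\ubf|_{k+1,E} \,\|\errbf_h\|_{\mathcal K, E} \lesssim \lambda_E^{1/2} h_E^k |\ubf|_{k+1,E}\,\|\errbf_h\|_{\mathcal K, E}.
$$

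For the third contribution I would use continuity of $A_h^E$ with respect to the $\|\cdot\|_{A,E}$-norm defined by $\|\vbf_h\|_{A,E}^2 \coloneqq \nu|\vbf_h|_{1,E}^2 + \sigma\|\vbf_h\|_{0,E}^2$, which is equivalent on $\Vbf_h^k(E)$ to the diffusion/reaction part of $\|\cdot\|_{\mathcal K, E}$ thanks to the stability of $S_h^E$. Then
$$
|A_h^E(\ubf_\pi - \ubf_{\mathcal I}, \errbf_h)| \lesssim \bigl(\|\ubf - \ubf_\pi\|_{A,E} + \|\ubf - \ubf_{\mathcal I}\|_{A,E}\bigr)\|\errbf_h\|_{\mathcal K,E}.
$$
The polynomial approximation term is handled as above, while $\|\ubf - \ubf_{\mathcal I}\|_{A,E}$ is bounded via Lemma \ref{lm:intf2}, the quasi-uniformity assumption \textbf{(A1+)} being used exactly as in the proof of Lemma \ref{lm:ei} in order to absorb the $\nu$-weighted $H^1$-seminorm into the factor $\lambda_E^{1/2} h_E^k |\ubf|_{k+1,E}$. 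Summing over $E \in \Omega_h$ and a final Cauchy--Schwarz in the summation index give the announced estimate.

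The main obstacle is twofold. First, I must carefully verify the polynomial consistency of the symmetric-gradient discretization coupled with the $\P0$-based reaction part and the stabilization, checking that all three pieces cancel simultaneously when tested against a polynomial of degree $k$. Second, I must package the two different scalings $\nu h_E^{2k}$ and $\sigma h_E^{2k+2}$ into the single factor $\lambda_E h_E^{2k}$, which forces me to use \textbf{(A1+)} to convert the $h_E^{k-1}$-gain for the $H^1$-seminorm in Lemma \ref{lm:intf2} into a $\nu$-weighted bound, exactly as performed for the interpolation error estimate in Lemma \ref{lm:ei}.
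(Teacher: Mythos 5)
Your proposal is correct and follows essentially the same route as the paper: the only cosmetic difference is that you insert an auxiliary polynomial $\ubf_\pi$ and invoke polynomial consistency of $A_h^E$ (legitimate, since $[\Pk_k(E)]^2 \subseteq \Vbf_h^k(E)$ for the enhanced space), whereas the paper cancels the same terms by adding and subtracting the projections $\PZ0P \nablabf \ubf$ and $\P0 \ubf$ directly and exploiting their $L^2$-orthogonality. Both arguments then reduce to exactly the same ingredients --- Lemma \ref{lm:bh} and Lemma \ref{lm:intf2} combined with the quasi-uniformity of \textbf{(A1+)} to turn the $h_E$-weighted interpolation bounds into the single factor $\lambda_E^{1/2} h_E^k |\ubf|_{k+1,E}$, plus the treatment \eqref{eq:stab-estimate} of the stabilization term --- so your proof is a faithful variant of the paper's.
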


\begin{proof}
We have to estimate the consistency term and the VEM stabilization term
\[
\begin{aligned}
\eta_A 
&= 
\sum_{E \in \Omega_h} \nu \bigl( \nablabf \ubf, \nablabf \errbf_h \bigr)_{0,E}
-
\sum_{E \in \Omega_h} \nu \bigl( \PZ0P \nablabf \ubf_{\mathcal I}, \PZ0P \nablabf \errbf_h \bigr)_{0,E}
\\
& \qquad +
 \sum_{E \in \Omega_h} \sigma \bigl( \ubf, \errbf_h \bigr)_{0,E}
-
\sum_{E \in \Omega_h} \sigma \bigl( \P0 \ubf_{\mathcal I}, \P0 \errbf_h \bigr)_{0,E} \\
& \qquad + \sum_{E \in \Omega_h} \lambda_E S_h^E\bigl( (I-\P0) \ubf_{\mathcal I}, (I-\P0) \errbf_h\bigr)  \eqqcolon \eta_A^c + \eta_A^s \, .
\end{aligned}
\]
For the first one, similarly to the previous lemma, exploiting the orthogonality of the polynomial projections, the quasi-uniformity of the mesh, and using Lemma \ref{lm:bh} and Lemma \ref{lm:intf2}, it holds that
\[
\begin{aligned}
\eta_A^c
&= 
\sum_{E \in \Omega_h} \nu \bigl( \nablabf \ubf - \PZ0P \nablabf \ubf, \nablabf \errbf_h \bigr)_{0,E}
+
\sum_{E \in \Omega_h} \nu \bigl(\PZ0P \nablabf \ubf - \PZ0P \nablabf \ubf_{\mathcal I}, \PZ0P \nablabf \errbf_h \bigr)_{0,E} \\
& \qquad +
\sum_{E \in \Omega_h} \sigma \bigl( \ubf - \P0 \ubf, \errbf_h \bigr)_{0,E}
+
\sum_{E \in \Omega_h} \sigma \bigl( \P0 \ubf - \P0 \ubf_{\mathcal I}, \P0 \errbf_h \bigr)_{0,E} \\
& \lesssim
\left(\sum_{E \in \Omega_h} \nu h_E^{2k} | \ubf |^2_{k+1,E}\right)^\frac12 \| \errbf_h \|_{\mathcal{K}}
+
\left(\sum_{E \in \Omega_h} \sigma h_E^{2k+2} | \ubf |^2_{k+1,E}\right)^\frac12 \| \errbf_h \|_{\mathcal{K}} \\
&\lesssim
\left(\sum_{E \in \Omega_h} \lambda_E h_E^{2k} | \ubf |^2_{k+1,E}\right)^\frac{1}{2} \| \errbf_h \|_{\mathcal{K}}\, .
\end{aligned}
\]
Now, we have to estimate the VEM stabilization term. Recalling the definition of $\lambda_E$, and using Lemma \ref{lm:bh}, we obtain
\begin{equation} \label{eq:stab-estimate}
\begin{aligned}
\sum_{E \in \Omega_h}\lambda_E \, S_h^E\bigl((I-\P0)\ubf_{\mathcal{I}}, (I-\P0)\errbf_h\bigr)
&\lesssim
\sum_{E \in \Omega_h}\lambda_E \, |(I-\P0)\ubf_{\mathcal{I}} |_{1,E} |(I-\P0)\errbf_h|_{1,E} \\
& \lesssim
\Big(\sum_{E \in \Omega_h}\lambda_E^\frac{1}{2} \, \bigl( |(I-\P0)\errbf_{\mathcal{I}} |_{1,E} \\
&\qquad \qquad \qquad+ |(I-\P0)\ubf |_{1,E} \bigr)\Big)^{\frac{1}{2}} \! \|\errbf_h\|_{\mathcal{K}} \\
&\lesssim
\left(\sum_{E \in \Omega_h}\lambda_E \, h_E^{2k} \, | \ubf |^2_{k+1,E}\right)^\frac{1}{2} \, \| \errbf_h \|_{\mathcal{K},E} \, .
\end{aligned}
\end{equation}
The proof is completed by summing over all the elements $E \in \Omega_h$.
\end{proof}

\begin{lemma}[Estimate of $\eta_c$]\label{lm:etac}
Under assumptions \textbf{(A1+)} and \textbf{(A2)},
it holds that
\[
\eta_c 
\lesssim
\left(
\sum_{E \in \Omega_h}\left( \lambda_E^{-1}h_E^2 \, \| \bb \|^2_{[W^{k+1}_\infty(\Omega_h)]^2} + \frac{h_E}{\min\{\delta\}} \right) h_E^{2k}\| \ubf\|^2_{k+1,E}
\right)^\frac12 \| \errbf_h \|_\mathcal K\, .
\]
where we have omitted low order terms.
\end{lemma}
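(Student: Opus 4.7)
The plan is to split $\eta_c$ into an interpolation error and a VEM consistency error,
\[
\eta_c \leq |c^{\text{skew}}(\ubf - \ubf_{\mathcal I}, \errbf_h)| + |c^{\text{skew}}(\ubf_{\mathcal I}, \errbf_h) - c^{\text{skew}}_h(\ubf_{\mathcal I}, \errbf_h)| \eqqcolon I_1 + I_2,
\]
and then to further decompose $I_2$ by inserting polynomial projections, so that each residual term either fits the standard $\lambda_E^{-1}$ consistency bucket or is absorbed by the jump stabilizer $J_h$.

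For $I_1$, I would exploit $\Div(\bb)=0$ to integrate by parts and transfer the gradient onto $\errbf_h$, then apply Cauchy--Schwarz with $\|\bb\|_{L^\infty(\Omega)^2}$. Lemma \ref{lm:intf2} gives $\|\ubf - \ubf_{\mathcal I}\|_{0,E} \lesssim h_E^{k+1}|\ubf|_{k+1,E}$, while $\|\nablabf \errbf_h\|_{0,E} \leq \nu^{-1/2}\|\errbf_h\|_{\mathcal K,E}$ from the definition of $\|\cdot\|_{\mathcal K,E}$. Using $\nu \leq \lambda_E$, this produces the $\lambda_E^{-1} h_E^{2k+2}\|\bb\|^2_{L^\infty}$ portion of the bound (which is dominated by the $\|\bb\|^2_{W^{k+1}_\infty}$ factor of the final estimate).

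For $I_2$, I introduce intermediate projections by adding and subtracting the hybrid form $\sum_E \int_E [(\PP0P \nablabf \ubf_{\mathcal I})\bb] \cdot \P0 \errbf_h$ together with its symmetric counterpart. This decomposes $I_2$ into three kinds of contributions: (i) a $\PP0P$-approximation error on $\nablabf \ubf_{\mathcal I}$, (ii) a $\P0$-approximation error on $\errbf_h$ (and on $\ubf_{\mathcal I}$), and (iii) a remaining piecewise-polynomial residual. For (i) and (ii), $L^2$-orthogonality lets me subtract a polynomial approximation of $\bb$ of degree $k$; by Lemma \ref{lm:bh} this produces a factor $h_E^{k+1}\|\bb\|_{W^{k+1}_\infty}$, which paired with $\lambda_E^{-1/2}\|\errbf_h\|_{\mathcal K,E}$ (using $\lambda_E \geq \nu$ and $\lambda_E \geq \sigma h_E^2$) yields exactly the $\lambda_E^{-1} h_E^{2k+2}\|\bb\|^2_{W^{k+1}_\infty}$ term of the bound.

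The piecewise polynomial residual (iii) is handled by elementwise integration by parts: the volume contributions cancel thanks to $\Div(\bb)=0$ together with the skew symmetrization, and only boundary jumps of $(\nablabf \Pg \ubf_{\mathcal I})\bb \cdot \mathbf t^E$ survive. These are precisely the quantities controlled by $J_h^{E,1}$, so Cauchy--Schwarz against the corresponding jumps of $\Pg \errbf_h$ produces a factor $(\min\{\delta_i\})^{-1/2}\sqrt{J_h^E(\errbf_h,\errbf_h)} \leq (\min\{\delta_i\})^{-1/2}\|\errbf_h\|_{\mathcal K,E}$. Combined with a polynomial trace inequality on $\nablabf \Pg \ubf_{\mathcal I}$ and Lemma \ref{lm:intf2}, this yields the $\frac{h_E}{\min\{\delta\}}\,h_E^{2k}|\ubf|^2_{k+1,E}$ contribution. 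The main obstacle is the bookkeeping: tracking all the sub-terms generated by the add/subtract chain and verifying that each one either enjoys a polynomial orthogonality that activates the $\|\bb\|_{W^{k+1}_\infty}$ estimate or collapses onto a mesh-skeleton jump that the three-level CIP form $J_h^{E,1}$ controls. I expect that, in matching the piecewise polynomial residual against a suitable virtual test, the Oswald interpolant of Proposition \ref{prp:oswald} plays an auxiliary role by quantifying the mismatch between a piecewise polynomial and its $\Phi_h^k(\Omega_h)$ regularization in terms of edge jumps, exactly the objects stabilized by $J_h$.
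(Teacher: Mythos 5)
There is a genuine gap, and it sits exactly at the heart of the lemma. Your treatment of $I_1$ bounds the dangerous term $\bigl(\errbf_{\mathcal I},(\nablabf\errbf_h)\bb\bigr)_{0,E}$ by Cauchy--Schwarz with $\|\nablabf\errbf_h\|_{0,E}\le\nu^{-1/2}\|\errbf_h\|_{\mathcal K,E}$ and then invokes ``$\nu\le\lambda_E$'' to arrive at $\lambda_E^{-1}$. That inequality goes the wrong way: $\nu\le\lambda_E$ gives $\nu^{-1}\ge\lambda_E^{-1}$, so a $\nu^{-1}$-weighted bound does not imply the stated $\lambda_E^{-1}$-weighted one; worse, the factor $\nu^{-1/2}$ blows up in the advection-dominated limit, which is precisely the regime the whole scheme targets. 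This term is $\eta^E_{c,2}=-\bigl(\errbf_{\mathcal I},(\nablabf\P0\errbf_h)\bb\bigr)_{0,E}$ in the paper's notation (it appears \emph{twice}, once from each half of the skew form, so nothing cancels by skew symmetrization, contrary to what you suggest; the only thing that vanishes is the inter-element boundary term $\sum_{E}\int_{\partial E}(\bb\cdot\nn^E)(\errbf_{\mathcal I}\cdot\errbf_h)\,{\rm d}s$, by continuity of both factors), and the paper's entire special machinery exists for it: since $\Div(\errbf_{\mathcal I})=0$, one writes $\errbf_{\mathcal I}$ as the $\textbf{curl}$ of an error potential and integrates by parts to move the scalar ${\rm curl}$ onto $(\nablabf\P0\errbf_h)\bb$. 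The resulting boundary term is absorbed by the first CIP level $J_h^{E,1}$ paired with $h_E^{-3/2}\|\varphi_{\mathcal I}\|_{0,E}$ (Lemma~\ref{lm:intf}, inverse estimates, quasi-uniformity), while the volume term is killed, up to an Oswald defect, by the $L^2(\Omega)$-orthogonality of the interpolant --- which is exactly why $\ubf_{\mathcal I}$ is constructed as the curl of the \emph{global} $L^2$-projection onto $\Phi_h^k(\Omega_h)$ and why \textbf{(A1+)} is assumed. The Oswald defect is then bounded via Proposition~\ref{prp:oswald} by the edge jumps of $\mathbf p={\rm curl}\bigl((\nablabf\P0\errbf_h)\bb_h\bigr)$ and of $\nabla\mathbf p$, with $\bb_h\in\Pk_2(\Omega_h)$ a piecewise-polynomial approximation of $\bb$ (needed so that $\mathbf p$ is a piecewise polynomial at all), and these jumps are precisely the second and third CIP levels $J_h^{E,2}$ and $J_h^{E,3}$.

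Your sketch gestures at some of these ingredients but never assembles them: the Oswald interpolant is relegated to ``an auxiliary role,'' there is no curl lift, the potential-space orthogonality is never used (you only invoke orthogonality of polynomial projections to subtract approximations of $\bb$), and only $J_h^{E,1}$ is genuinely activated --- so $\delta_2$ and $\delta_3$ never enter your argument, and the $h_E/\min\{\delta\}$ term of the statement cannot be produced for all of $\eta_c$. By contrast, your handling of the remaining consistency pieces (the analogues of the paper's $\eta_{c,3}$--$\eta_{c,9}$) is essentially right, provided you replace the loose ``$\lambda_E\ge\nu$, $\lambda_E\ge\sigma h_E^2$'' justification by the correct mechanism $\|(I-\P0)\errbf_h\|_{0,E}\lesssim\min\{\sigma^{-1/2},\nu^{-1/2}h_E\}\,\|\errbf_h\|_{\mathcal K,E}=h_E\,\lambda_E^{-1/2}\,\|\errbf_h\|_{\mathcal K,E}$, which is the direction-safe form the paper uses.
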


\begin{proof}
Fixed an element $E \in \Omega_h$, recalling that $c(\cdot,\cdot) = c^{\text{skew}}(\cdot,\cdot)$, we have to consider the following two local forms
\[
\eta_{c,A}^E 
=
\bigl( ( \nablabf \ubf) \bb, \errbf_h \bigr)_{0,E}
-
\bigl( (\mathbf{\Pi}^{0,E}_k 
 \nablabf \ubf_{\mathcal I}) \bb, \P0 \errbf_h \bigr)_{0,E} \, ,
\]
\[
\eta_{c,B}^E 
=
\bigl( \P0 \ubf_{\mathcal{I}}, (\mathbf{\Pi}^{0,E}_k 
 \nablabf \errbf_h) \bb \bigr)_{0,E} \, ,
-
\bigl( u, (\nablabf \errbf_h)\bb \bigr)_{0,E} \, .
\]
On the first one, adding and subtracting, integrating by parts, we have 
\[
\begin{aligned}
\eta_{c,A}^E 
&=
\bigl( ( \nablabf \ubf) \bb, \errbf_h \bigr)_{0,E}
-
\bigl( ( \nablabf \ubf_{\mathcal{I}}) \bb, \P0 \errbf_h \bigr)_{0,E}
+
\bigl( (I - \mathbf{\Pi}^{0,E}_k) 
 \nablabf \ubf_{\mathcal I} , (\P0 \errbf_h) \bb^T \bigr)_{0,E} \\
 &=
\bigl( ( \nablabf \errbf_{\mathcal{I}}) \bb, \errbf_h \bigr)_{0,E}
+
\bigl( ( \nablabf \ubf_{\mathcal{I}}) \bb, (I-\P0) \errbf_h \bigr)_{0,E}
+
\bigl( (I - \mathbf{\Pi}^{0,E}_k) \nablabf \ubf_{\mathcal I} , (\P0 \errbf_h) \bb^T \bigr)_{0,E} \\
& =
\int_{\partial E} (\bb \cdot \nn^E) (\errbf_{\mathcal I} \cdot \errbf_h) {\rm d}e
-
\bigl ( \errbf_{\mathcal I}, (\nablabf \errbf_h)\bb)_{0,E}
+
\bigl( ( \nablabf \ubf_{\mathcal{I}}) \bb, (I-\P0) \errbf_h \bigr)_{0,E} \\
& \qquad +
\bigl( (I - \mathbf{\Pi}^{0,E}_k) \nablabf \ubf_{\mathcal I} , (\P0 \errbf_h) \bb^T \bigr)_{0,E} \\
& =
\int_{\partial E} (\bb \cdot \nn^E) (\errbf_{\mathcal I} \cdot \errbf_h) {\rm d}e
-
\bigl ( \errbf_{\mathcal I}, (\nablabf \P0 \errbf_h)\bb)_{0,E} \\
& \qquad 
-
\bigl ( \errbf_{\mathcal I}, (\nablabf (I - {\Pi}^{0,E}_{k}) \errbf_h)\bb)_{0,E}
+
\bigl( (I - \P0)[( \nablabf \ubf_{\mathcal{I}}) \bb], (I-\P0) \errbf_h \bigr)_{0,E}  \\
& \qquad +
\bigl( (I - \mathbf{\Pi}^{0,E}_k) \nablabf \ubf_{\mathcal I} , (\P0 \errbf_h) \bb^T \bigr)_{0,E} \\
& =
\int_{\partial E} (\bb \cdot \nn^E) (\errbf_{\mathcal I} \cdot \errbf_h) {\rm d}e
-
\bigl ( \errbf_{\mathcal I}, (\nablabf \P0 \errbf_h)\bb)_{0,E} \\
& \qquad 
-
\bigl ( \errbf_{\mathcal I}, (\nablabf (I - {\Pi}^{0,E}_{k}) \errbf_h)\bb)_{0,E}
+
\bigl( (I - \P0)[( \nablabf \ubf_{\mathcal{I}}) \bb], (I-\P0) \errbf_h \bigr)_{0,E}  \\
& \qquad +
\bigl( (I - \mathbf{\Pi}^{0,E}_k) \nablabf \ubf_{\mathcal I} , (\Pi^{0,E}_0 \errbf_h) \bb^T \bigr)_{0,E}
+
\bigl( (I - \mathbf{\Pi}^{0,E}_k) \nablabf \ubf_{\mathcal I} , (\P0\errbf_h - \Pi^{0,E}_0 \errbf_h) \bb^T \bigr)_{0,E}
\\
&\eqqcolon
\eta_{c,1}^E
+
\eta_{c,2}^E
+
\eta_{c,3}^E
+
\eta_{c,4}^E
+
\eta_{c,5}^E 
+
\eta_{c,6}^E 
\, .
\end{aligned}
\]
Similarly, for the other term, we have that
\[
\begin{aligned}
\eta_{c,B}^E
&=
\bigl( \P0 \ubf_{\mathcal{I}} - \ubf, (\mathbf{\Pi}^{0,E}_k 
 \nablabf \errbf_h) \bb \bigr)_{0,E} 
-
\bigl( u, [(\mathbf{\Pi}^{0,E}_k - I) \nablabf \errbf_h]\bb \bigr)_{0,E} \\
&=
\bigl( \P0 \ubf_{\mathcal{I}} - \ubf, ( 
 \nablabf \P0 \errbf_h) \bb \bigr)_{0,E} 
-
\bigl( u\bb^T, (\mathbf{\Pi}^{0,E}_k \nablabf \errbf_h- I) \errbf_h\bb \bigr)_{0,E} \\
& \qquad
+
\bigl( \P0 \ubf_{\mathcal{I}} - \ubf, [\mathbf{\Pi}^{0,E}_{k} \nablabf \errbf_h - \nablabf\P0 \errbf_h 
] \bb \bigr)_{0,E} \\
&=
- \bigl( \errbf_I, (\nablabf \P0 \errbf_h) \bb \bigr)_{0,E}
+
\bigl( (\P0 - I) \ubf_{\mathcal{I}}, ( 
 \nablabf \P0 \errbf_h) \bb \bigr)_{0,E} \\
& \qquad-
\bigl( u \bb^T, (\mathbf{\Pi}^{0,E}_k  - I)\nablabf \errbf_h)_{0,E} 
+
\bigl( \P0 \ubf_{\mathcal{I}} - \ubf, [\mathbf{\Pi}^{0,E}_{k} \nablabf \errbf_h - \nablabf\P0 \errbf_h 
] \bb \bigr)_{0,E} \\
 &\coloneqq
 \eta_{c,2}^E 
 +
 \eta_{c,7}^E 
 +
 \eta_{c,8}^E 
 +
 \eta_{c,9}^E \, . 
\end{aligned}
\]
We proceed by estimating the terms $\eta_{c,i} \coloneqq \sum_{E \in \Omega_h} \eta_{c,i}^E$ for $i = 1, \ldots, 9$. \\
\textit{Estimate of $\eta_{c,1}$:} Since the functions $\errbf_\mathcal{I}$ and $\errbf_h$ are continuous across the elements boundaries and equal to zero on $\Gamma$, it holds that
\[
 \eta_{c,1}
=
\sum_{E \in \Omega_h} \int_{\partial E} (\bb \cdot \nn^E) (\errbf_{\mathcal I} \cdot \errbf_h) \, {\rm d}s =0 \, .
\]
\textit{Estimate of $\eta_{c,2}$:} This term is the most complicated term to estimate. The general idea is that since $\Div(\errbf_I) = 0$, it exists a potential $\varphi_I$ such that $\textbf{curl}(\varphi_I) = \errbf_I$. Exploiting the orthogonality of the interpolation operator defined in Lemma \ref{lm:intf}, we add the Oswald interpolant of $\text{curl}((\nablabf  \P0 \errbf_h)\bb)$.
\begin{equation}\label{eq:par 1}
\begin{aligned}
\eta_{c,2}^E 
&=
 - \bigl ( \errbf_{\mathcal I}, (\nablabf \P0 \errbf_h)\bb\bigr)_{0,E}
 = 
 - \bigl( \textbf{curl} (\varphi_{\mathcal I}),  (\nablabf \ \P0 \errbf_h)\bb\bigr)_{0,E} \\
 &= 
- \bigl( \varphi_{\mathcal I}, \text{curl}((\nablabf  \P0 \errbf_h)\bb)\bigr)_{0,E}
 + 
 \int_{\partial E} \varphi_{\mathcal{I}} \bigl[ ( \nablabf \P0 \errbf_h) \bb\bigr] \cdot \mathbf{t}^E  \, {\rm d} s \, .
\end{aligned}
\end{equation}
The boundary integral is estimated using trace inequality, Proposition \ref{prp:inv10}, quasi uniformity assumption, Lemma \ref{lm:intf}, and first level of CIP
\begin{equation}\label{eq:par 2}
\begin{aligned}
\sum_{E \in \Omega_h}
 \int_{\partial E} \varphi_{\mathcal{I}} \bigl[ (\nablabf \P0 \errbf_h) \bb\bigr] \cdot \mathbf{t}^E \, {\rm d} s
 &\leq
 \sum_{E \in \Omega_h}
 \| \varphi_{\mathcal I} \|_{0,\partial E}
 \| [\![(\nablabf \Pg \errbf_h) \bb \cdot \mathbf t^E ]\!]  \|_{0,\partial E} \\
 &\leq
 \left(\sum_{E \in \Omega_h}
 h_E^{-3}\| \varphi_{\mathcal I} \|^2_{0, E}\right)^\frac{1}{2} \!
 \left(\sum_{E \in \Omega_h}
 h_E^2\| [\![(\nablabf \Pg \errbf_h) \bb \cdot \mathbf t^E]\!] \|^2_{0,\partial E}\right)^\frac{1}{2} \\
 & \leq
 \left(\sum_{E \in \Omega_h}
 {\delta_1^{-1}} h_E^{2k + 1} | \ubf |^2_{k+1,E}\right)^\frac{1}{2} \| \errbf_h \|_{\mathcal{K}} \, .
\end{aligned}
\end{equation}
We introduce $\bb_h$ as the piecewise $\mathbb P_2(\Omega_h)$ approximation of $\bb$. Let $\mathbf p \coloneqq (\nablabf \Pg \errbf_h) \bb_h$, exploiting the orthogonality of the interpolation operator, we have that
\[
\begin{aligned}
\sum_{E \in \Omega_h} \bigl( \varphi_{\mathcal I}, \text{curl}((\nablabf  \P0 \errbf_h)\bb)\bigr)_{0,E} 
&= 
\sum_{E \in \Omega_h}  \bigl( \varphi_{\mathcal I}, \text{curl}((\nablabf  \P0 \errbf_h)\bb_h)\bigr)_{0,E} \\ 
& \qquad +  \sum_{E \in \Omega_h} \bigl( \varphi_{\mathcal I}, \text{curl}((\nablabf  \P0 \errbf_h)(\bb - \bb_h))\bigr)_{0,E} \\
&= 
\sum_{E \in \Omega_h}  \bigl( \varphi_{\mathcal I}, (I-\pi) \, \text{curl}((\nablabf  \P0 \errbf_h)\bb_h)\bigr)_{0,E} \\ 
& \qquad +  \sum_{E \in \Omega_h} \bigl( \varphi_{\mathcal I}, \text{curl}((\nablabf  \P0 \errbf_h)(\bb - \bb_h))\bigr)_{0,E} \, ,
\end{aligned}
\]
where $\pi$ is the Oswald interpolant from Proposition \ref{prp:oswald}, mapping into the space $\tilde{\Phi}_h^k(\Omega_h)$, which corresponds to the space $\Phi_h^k(\Omega_h)$ without imposing zero boundary conditions on the boundary. 

We have that
\begin{equation}\label{eq:par 3}
\sum_{E \in \Omega_h} \bigl( \varphi_{\mathcal I}, \text{curl}((\nablabf  \P0 \errbf_h)(\bb - \bb_h))\bigr)_{0,E}
\lesssim
\left(\sum_{E \in \Omega_h} \lambda_E^{-1} \,  | \bb |^2_{[W^3_\infty(\Omega)]^2} \, h_E^{2k+6} \, | \ubf|^2_{k+1,E}\right)^\frac{1}{2} \| \errbf_h \|_{\mathcal{K}} \, .
\end{equation}
On the other hand, using Cauchy-Schwarz inequality, we obtain 
\begin{equation}\label{eq:c est}
\begin{aligned}
\sum_{E \in \Omega_h}  \bigl( \varphi_{\mathcal I}, (I-\pi) \, \text{curl}((\nablabf  \P0 \errbf_h)\bb_h)\bigr)_{0,E}
&
\lesssim
\sum_{E \in \Omega_h}
\| \varphi_\mathcal I \|_{0,E}
\| (I-\pi) \, \text{curl}((\nablabf  \P0 \errbf_h)\bb_h) \|_{0,E}
\end{aligned}
\end{equation}
Thanks to Proposition \ref{prp:oswald}, we have that
\begin{equation} \label{eq:Jest}
\| (I-\pi) \, \text{curl}((\nablabf  \P0 \errbf_h)\bb_h) \|^2_{0,E}
\lesssim
h_E \| [\![\mathbf{p}]\!]\|^2_{0,\partial E} + h_E^{3} \| [\![\nablabf \mathbf{p}]\!]\|^2_{0,\partial E} \, ,
\end{equation}
where $\mathbf{p} \coloneqq \text{curl}((\nablabf  \P0 \errbf_h)\bb_h)$.
For the portions of $\partial E$ that are not on the boundary of the domain $\Omega$, adding and subtracting $\bb$, we note that
\begin{equation*}
\begin{aligned}
\sum_{E \in \Omega_h}\| [\![\mathbf p]\!]\|^2_{0,\partial E /\Gamma} 
&=
\sum_{E \in \Omega_h} \| [\![\text{curl}((\nablabf  \Pg \errbf_h)\bb_h)]\!]\|^2_{0,\partial E/\Gamma} \\
&=
\sum_{E \in \Omega_h}\| [\![\text{curl}((\nablabf  \Pg \errbf_h)\bb)]\!]\|^2_{0,\partial E/\Gamma}
+
\sum_{E \in \Omega_h}\| [\![\text{curl}((\nablabf  \Pg \errbf_h)(\bb-\bb_h))]\!]\|^2_{0,\partial E/\Gamma} \\
&\lesssim
\sum_{E \in \Omega_h} \delta_2^{-1}h_E^{-4} J_h^{E,2}(\errbf_h, \errbf_h)
+
\sum_{E \in \Omega_h} h_E^{4} | \bb |^2_{[W^2_\infty(\Omega)]^2} \| \text{curl}(\nablabf\Pg \errbf_h)\|^2_{0,\partial E/\Gamma} \\
&\lesssim
\sum_{E \in \Omega_h} \delta_2^{-1} h_E^{-4} J_h^{E,2}(\errbf_h, \errbf_h)
+
\sum_{E \in \Omega_h} \lambda_E^{-1}h_E | \bb |^2_{[W^2_\infty(\Omega)]^2} \|\errbf_h\|^2_{\mathcal K,E} \, .
\end{aligned}
\end{equation*}
Similarly, if $\partial E$ is contained in $\Gamma$, using polynomial trace inequality and inverse estimates, we have
\[
\begin{aligned}
\sum_{E \in \Omega_h}\| [\![\mathbf p]\!]\|^2_{0,\partial E \cap \Gamma} 
=
\sum_{E \in \Omega_h}\| \mathbf p\|^2_{0,\partial E \cap \Gamma} 
&\lesssim
\sum_{E \in \Omega_h} h_E^{-1} \|\text{curl}((\nablabf  \P0 \errbf_h)\bb_h)\|^2_{0,E} \\
&\lesssim
\sum_{E \in \Omega_h} \| \bb_h\|^2_{[L^\infty(E)]^2} \bigl(h_E^{-3}\| \P0 \errbf_h\|^2_{0,E} + h_E^{-1}| \P0 \errbf_h|^2_{1,E}\bigr) \\
&\lesssim
\sum_{E \in \Omega_h} \lambda_E^{-1} h_E^{-1} \| \bb_h\|^2_{[L^\infty(E)]^2}\| \errbf_h\|_{\mathcal K,E} \, .
\end{aligned}
\]
These two estimates combined give
\begin{equation} \label{eq:J2-est}
\sum_{E \in \Omega_h}\| [\![\mathbf p]\!]\|^2_{0,\partial E /\Gamma}  
\lesssim 
\sum_{E \in \Omega_h} \delta_2^{-1} h_E^{-4} J_h^{E,2}(\errbf_h, \errbf_h) 
+
\sum_{E \in \Omega_h} \lambda_E^{-1}h_E^{-1} \| \bb \|^2_{[W^2_\infty(\Omega)]^2} \|\errbf_h\|^2_{\mathcal K,E}
\end{equation}
Similarly, we obtain
\begin{equation}\label{eq:J3-est}
\sum_{E \in \Omega_h}\| [\![\nablabf \mathbf p]\!]\|^2_{0,\partial E}
\lesssim
\sum_{E \in \Omega_h} \delta_3^{-1} h_E^{-6} J_h^{E,3}(\errbf_h, \errbf_h)
+
\sum_{E \in \Omega_h} \lambda_E^{-1}h_E^{-1} \| \bb \|^2_{[W^2_\infty(\Omega)]^2} \|\errbf_h\|^2_{\mathcal K,E} \, .
\end{equation}
Combining \eqref{eq:J2-est} and \eqref{eq:J3-est} in \eqref{eq:Jest}, and then in \eqref{eq:c est}, using Lemma \ref{lm:stab}, we obtain
\[
\begin{aligned}
\sum_{E \in \Omega_h} \| (I-\pi) \, \text{curl}((\nablabf  \P0 \errbf_h)\bb_h) \|_{0,E}
&\lesssim
\left(\sum_{E \in \Omega} \lambda_E^{-1} | \bb |^2_{[W^3_\infty(\Omega)]^2} h_E^{2k+6} | \ubf|^2_{k+1,E}\right)^\frac{1}{2}\| \errbf_h \|_\mathcal K \\
& \quad
+ \left(\sum_{E \in \Omega_h} \max\{ \delta_2^{-1}, \delta_3^{-1}\}  h_E^{2k+1} | \ubf |^2_{k+1,E}\right)^\frac{1}{2} \| \errbf_h \|_{\mathcal K} \, .
\end{aligned}
\]
We conclude that
\[
\begin{aligned}
\eta_{c,2} &\lesssim 
\left(\sum_{E \in \Omega_h} \max\{\delta_i^{-1}\}_{i=1}^3 \, h_E^{2k+1} | \ubf|^2_{k+1,E}\right)^\frac12 \|\errbf_h\|_{\mathcal K} \\ 
& \qquad+
\left(\sum_{E \in \Omega_h} \lambda_E^{-1} | \bb |^2_{[W^2_\infty(\Omega)]^2} h_E^{2k+6} | \ubf|^2_{k+1,E}\right)^\frac12 \!\| \errbf_h \|_\mathcal K \, .
\end{aligned}
\]
\textit{Estimate of $\eta_{c,3}$:} On the third one, we use Cauchy-Schwarz inequality, Lemma \ref{lm:intf2}, and the definition of $\lambda_E$ to obtain
\[ 
\begin{aligned}
\eta_{c,3} 
&=
\sum_{E\in\Omega_h}
-\bigl ( \errbf_{\mathcal I}, ((I - \mathbf{\Pi}^{0,E}_{k-1}) \nablabf \errbf_h)\bb \bigr )_{0,E} \\
&\leq
\sum_{E\in\Omega_h} \| \bb \|_{[L^\infty(\Omega)]^2} \| \errbf_{\mathcal I} \|_{0,E}
\| (I - \mathbf{\Pi}^{0,E}_{k-1}) \nablabf \errbf_h) \|_{0,E} \\
& \lesssim
\left(\sum_{E \in \Omega_h} \lambda_E^{-1}
  \, \| \bb \|^2_{[L^\infty(\Omega)]^2} \, h_E^{2k+2}  \, | \ubf|^2_{k+1,E} \right)^\frac12   \| \errbf_h\|_{\mathcal K} \, .
\end{aligned}
\]
\textit{Estimate of $\eta_{c,4}$:} We begin by adding and subtracting $\bigl( (I - \P0)[( \nablabf \ubf) \bb], (I-\P0) \errbf_h \bigr)_{0,E}$ on each element $E \in \Omega_h$, we use Cauchy-Schwarz inequality, and Lemma \ref{lm:bh} and Lemma \ref{lm:intf2}.
\[
\begin{aligned}
 \eta_{c,4}
&= 
\sum_{E \in \Omega_h}
-
\bigl( (I - \P0)[( \nablabf \ubf_{\mathcal{I}}) \bb], (I-\P0) \errbf_h \bigr)_{0,E} \\
&\leq
\sum_{E \in \Omega_h}
\bigl( (I - \P0)[( \nablabf \ubf) \bb], (I-\P0) \errbf_h \bigr)_{0,E} \\
& \qquad -
\sum_{E \in \Omega_h}\bigl( (I - \P0)[( \nablabf \errbf_{\mathcal{I}}) \bb], (I-\P0) \errbf_h \bigr)_{0,E} \\
& \lesssim
\sum_{E \in \Omega_h} \left(\| (I - \P0)[( \nablabf \ubf) \bb] \|_{0,E} + \| (I - \P0)[( \nablabf \errbf_{\mathcal{I}}) \bb] \|_{0,E}  \right) \| (I-\P0) \errbf_h \|_{0,E}\\
&\lesssim
\sum_{E \in \Omega_h} \left( h_E^k |(\nablabf \ubf) \bb|_{k,E} + \| \bb \|_{[L^\infty(\Omega)]^2} \| \nablabf \errbf_{\mathcal{I}}\|_{0,E} \right) h_E \lambda_E^{-\frac{1}{2}} \, \|\errbf_h \|_{\mathcal{K},E} \\
&\lesssim
\left(\sum_{E \in \Omega_h}  \lambda_E^{-1} \,  \| \bb \|^2_{[W_\infty^k(\Omega)]^2} \, h_E^{2k+2} \|\ubf\|^2_{k+1,E} \right)^\frac12\|\errbf_h \|_{\mathcal{K}} \, .
\end{aligned}
\]
\textit{Estimate of $\eta_{c,5}$:} Using Cauchy-Schwarz inequality, Lemma \ref{lm:bh} and Lemma \ref{lm:intf2}, we get
\[
\begin{aligned}
\eta_{c,5}
&=
\sum_{E \in \Omega_h}
\bigl( (I - \mathbf{\Pi}^{0,E}_k) \nablabf \ubf_{\mathcal I} , (\Pi^{0,E}_0 \errbf_h) \bb^T \bigr)_{0,E} \\
&=
\sum_{E \in \Omega_h}
\bigl( (I - \mathbf{\Pi}^{0,E}_k) \nablabf \ubf_{\mathcal I} , (\Pi^{0,E}_0 \errbf_h)(I-\P0) \bb^T \bigr)_{0,E} \\
&\leq
\sum_{E \in \Omega_h}
\| (I - \mathbf{\Pi}^{0,E}_k) \nablabf \ubf_{\mathcal I} \|_{0,E} \| \Pi^{0,E}_0 \errbf_h\|_{0,E} \|(I-\P0) \bb^T \|_{[L^{\infty}(E)]^2} \\
& \lesssim
\left(\sum_{E \in \Omega_h}
\| \bb \|^2_{[W_\infty^{k+1}(E)]^2} \, h_E^{4k+2} \, |\ubf|^2_{k+1,E}\right)^\frac12\| \errbf_h \|_{0,\Omega} \, .
\end{aligned}
\]
Note that, thanks to the Poincarè inequality ($\errbf_h$ is zero on the boundary), we can estimate $\| \errbf_h \|_{0,\Omega}$ as
\[
\| \errbf_h \|_{0,\Omega}
\lesssim
\frac{\| \errbf_h \|_{\mathcal K}}{\sigma^{1/2}}
\qquad
\text{or}
\qquad
\| \errbf_h \|_{0,\Omega}
\lesssim
\| \nablabf \errbf_h \|_{0,\Omega}
\lesssim
\frac{\| \errbf_h \|_{\mathcal K}}{\nu^{1/2}} \, .
\]
\textit{Estimate of $\eta_{c,6}$:} Exploiting the orthogonality of the projection operators and using similar computations as in the previous cases, we obtain
\[
\begin{aligned}
\sum_{E \in \Omega_h} \eta^E_{c,6} 
&=
\sum_{E \in \Omega_h}\bigl( (I - \mathbf{\Pi}^{0,E}_k) \nablabf \ubf_{\mathcal I} , (\P0\errbf_h - \Pi^{0,E}_0 \errbf_h) \bb^T \bigr)_{0,E} \\
&=
\sum_{E \in \Omega_h}\bigl( (I - \mathbf{\Pi}^{0,E}_k) \nablabf \ubf_{\mathcal I} , (\P0\errbf_h - \Pi^{0,E}_0 \errbf_h) (I - \Pi^{0,E}_0)\bb^T \bigr)_{0,E} \\
&\leq
\sum_{E \in \Omega_h}\bigl \| (I - \mathbf{\Pi}^{0,E}_k) \nablabf \ubf_{\mathcal I} \|_{0,E} \, \|\P0\errbf_h - \Pi^{0,E}_0 \errbf_h\|_{0,E} \, \|(I - \Pi^{0,E}_0)\bb^T \|_{[L^\infty(E)]^2} \\
& \lesssim
\left(\sum_{E \in \Omega_h} \| \bb \|^2_{[W_\infty^1(\Omega)]^2} \, h_E^{2k+2} \, |\ubf|^2_{k+1,E} \right)^\frac{1}{2} 
\left(\sum_{E\in\Omega_h}\|(I - \Pi^{0,E}_0) \errbf_h\|^2_{0,E}\right)^\frac12 \, , 
\end{aligned}
\]
where the last term can be estimated as
\[
\|(I - \Pi^{0,E}_0) \errbf_h\|_{0,E}
\lesssim
\| \errbf_h \|_{0,E}
\lesssim
\frac{1}{\sigma^{\frac{1}{2}}}\| \errbf_h \|_{\mathcal K,E} \, ,
\]
or
\[
\|(I - \Pi^{0,E}_0) \errbf_h\|_{0,E}
\lesssim
h_E \| \nablabf \errbf_h\|_{0,E}
\lesssim
\frac{h_E}{\nu^{\frac{1}{2}}} \| \errbf_h\|_{\mathcal K,E} \, .
\]
Hence, we deduce
\[
\sum_{E \in \Omega_h} \eta_{c,6} ^E
\lesssim
\left(\sum_{E \in \Omega_h} \lambda_E^{-1}\| \bb \|^2_{[W_\infty^1(\Omega)]^2} \, h_E^{2k+4} \, |\ubf|^2_{k+1,E} \right)^\frac12 \| \errbf_h\|_{\mathcal K} \, .
\]
\textit{Estimate of $\eta_{c,7}$:} Similarly to the previous cases, using the orthogonaluty of the polynomial projections, Lemma \ref{lm:bh} and Lemma \ref{lm:intf2}, and the definition of $\lambda_E$, we obtain
\[
\begin{aligned}
\sum_{E \in \Omega_h} \eta_{c,7}^E 
&=
\sum_{E \in \Omega_h}\bigl( (\P0 - I) \ubf_I, (\nablabf \P0) \bb \bigr)_{0,E} \\
&=
\sum_{E \in \Omega_h}\bigl( (\P0 - I) \ubf_I, (\nablabf \P0) (I - \Pi^{0,E}_1)\bb \bigr)_{0,E} \\
&\lesssim
\left(\sum_{E \in \Omega_h}
\lambda_E^{-1} \, \| \bb \|^2_{[W^2_\infty(\Omega)]^2} \, h_E^{2k+6} \,| \ubf|^2_{k+1,E} \right)^\frac12\| \errbf_h\|_\mathcal K \, .
\end{aligned}
\]
\textit{Estimate of $\eta_{c,8}$:} Exploiting the orthogonality of the projection operators, and the same approximation/interpolation results, we obtain
\[
\begin{aligned}
\sum_{E \in \Omega_h} \eta_{c,8}^E
&=
\sum_{E \in \Omega_h}
\bigl( u \bb^T, (\mathbf{\Pi}^{0,E}_k  - I)\nablabf \errbf_h)_{0,E}\\
&=
\sum_{E \in \Omega_h}
\bigl( (I-\mathbf{\Pi}^{0,E}_k)(u \bb^T), (\mathbf{\Pi}^{0,E}_k  - I)\nablabf \errbf_h)_{0,E}\\
&\leq
\sum_{E \in \Omega_h}
\| (I-\mathbf{\Pi}^{0,E}_k)(u \bb^T)\|_{0,E} \| (\mathbf{\Pi}^{0,E}_k  - I)\nablabf \errbf_h)\|_{0,E}\\
&\lesssim
\left(\sum_{E \in \Omega_h}
\lambda_E^{-1} \,  h_E^{2k+2} \,\| \bb \|^2_{[W^{k+1}_\infty(\Omega)]^2} \, \| \ubf \|^2_{k+1,E} \right)^\frac{1}{2} \| \errbf_h\|_{\mathcal K} \,. 
\end{aligned}
\]
\textit{Estimate of $\eta_{c,9}$:} Similarly to the previous cases, we have
\[
\begin{aligned}
\sum_{E \in \Omega_h} \eta_{c,9}^E 
&=
\sum_{E \in \Omega_h}
\bigl( \P0 \ubf_{\mathcal{I}} - \ubf, [\mathbf{\Pi}^{0,E}_{k} \nablabf \errbf_h - \nablabf\P0 \errbf_h 
] \bb \bigr)_{0,E}\\
&\leq
\sum_{E \in \Omega_h}
\| \P0 \ubf_{\mathcal{I}} - \ubf \|_{0,E} \| [\mathbf{\Pi}^{0,E}_{k} \nablabf \errbf_h - \nablabf\P0 \errbf_h 
] \bb \|_{0,E} \\
&\leq
\sum_{E \in \Omega_h}
\| \P0 \ubf_{\mathcal{I}} - \ubf \|_{0,E} \, , \bigl(\| [\mathbf{\Pi}^{0,E}_{k} \nablabf \errbf_h - \nablabf \errbf_h ] \bb \|_{0,E} + \| [\nablabf\P0 \errbf_h - \nablabf \errbf ] \bb \|_{0,E}\bigr) \\
& \lesssim
\left(\sum_{E \in \Omega_h} \lambda_E^{-1} \| \bb \|^2_{[L^\infty(\Omega)]^2}  h_E^{2k+2} | \ubf |^2_{k+1,E} \right)^\frac{1}{2}\| \errbf_h\|_{\mathcal K} \, .
\end{aligned}
\]
\end{proof}

\begin{lemma}[Estimate of $\eta_J$] \label{lm:etaJ}Under assumptions \textbf{(A1+)} and \textbf{(A2)}, it holds that
\[
\eta_J \lesssim
\left( \sum_{E \in \Omega_h}\bigl(\delta \, \| \bb \|^2_{[W^{2}_\infty(E)]^2}h_E^{2k+1} + \delta \lambda_E^{-1} h_E^{2k+2}\bigr) \,  \, | \ubf|^2_{k+1,E} \right)^\frac12 \| \errbf_h \|_{\mathcal K} \, .
\]
\end{lemma}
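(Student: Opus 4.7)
The plan is to exploit the fact that $J_h$ is a symmetric positive-semidefinite bilinear form, applying Cauchy--Schwarz to split
\[
\eta_J = |J_h(\ubf_{\mathcal{I}}, \errbf_h)| \leq J_h(\ubf_{\mathcal{I}}, \ubf_{\mathcal{I}})^{1/2}\, J_h(\errbf_h, \errbf_h)^{1/2} \leq J_h(\ubf_{\mathcal{I}}, \ubf_{\mathcal{I}})^{1/2}\, \| \errbf_h \|_{\mathcal K},
\]
using that $J_h(\errbf_h, \errbf_h)$ is one of the pieces composing the $\mathcal K$-norm. The task then reduces to bounding $J_h(\ubf_{\mathcal{I}}, \ubf_{\mathcal{I}})$ element by element, controlling separately the three CIP levels $J_h^{E,i}$ and the VEM stabilization contribution $\delta h_E S_h^E((I-\Pi_k^{0,E})\ubf_{\mathcal{I}},(I-\Pi_k^{0,E})\ubf_{\mathcal{I}}))$.

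For each CIP level, I would exploit that by the regularity assumption \textbf{(A2)} the quantities $(\nablabf \ubf)\bb\cdot \mathbf{t}^E$, $\mathcal B \ubf$ and $\nabla \mathcal B \ubf$ all have vanishing jumps across internal edges. Hence on every internal edge I may freely subtract $\ubf$ and reduce, for instance on the first level, to bounding jumps of $(\nablabf(\Pi_k^{0,E}\ubf_{\mathcal{I}} - \ubf))\bb\cdot\mathbf{t}^E$. The triangle-inequality split
\[
\Pi_k^{0,E}\ubf_{\mathcal{I}} - \ubf = (\Pi_k^{0,E}\ubf - \ubf) - \Pi_k^{0,E}\errbf_{\mathcal{I}}
\]
then separates the contribution into a polynomial approximation error, which I would control by Lemma \ref{lm:bh} combined with a multiplicative trace inequality, and the $L^2$-projection of the velocity interpolation error, which I would control by a polynomial inverse trace estimate together with Lemma \ref{lm:intf2}. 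After accounting for the $h_E^{2i}$ weight in $J_h^{E,i}$ and expanding $\mathcal B = \textnormal{curl}((\nablabf \cdot)\bb)$ via Leibniz (which brings in factors of $\|\bb\|_{[W^j_\infty(E)]^2}$ with $j\leq 2$), each of the three levels should contribute at most $\delta \|\bb\|^2_{[W^2_\infty(E)]^2}h_E^{2k+1}|\ubf|^2_{k+1,E}$.

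I would treat the VEM stabilization contribution exactly as in Lemma \ref{lm:ei}: the upper bound on $S_h^E$ together with $|(I-\Pi_k^{0,E})\ubf_{\mathcal{I}}|_{1,E}\lesssim h_E^k |\ubf|_{k+1,E}$ gives an immediate contribution of order $\delta h_E^{2k+1}|\ubf|^2_{k+1,E}$. The additional summand $\delta \lambda_E^{-1} h_E^{2k+2}|\ubf|^2_{k+1,E}$ in the stated bound I expect to come from one of the higher-order jumps: when a non-polynomial remainder cannot be fully reabsorbed into the CIP form of $\errbf_h$, it must be dominated instead by the diffusive/reactive portion of $\|\errbf_h\|_{\mathcal K,E}$, which is exactly what introduces the $\lambda_E^{-1}$ factor, at the cost of paying an extra $h_E^2$ through the polynomial inverse estimate.

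The main technical hurdle will be the third-level jump $J_h^{E,3}$: once $\nabla\mathcal B$ is expanded and derivatives are distributed onto $\bb$ via Leibniz, the integrand involves up to third derivatives of $\Pi_k^{0,E}\ubf_{\mathcal{I}} - \ubf$. Bounding this cleanly requires combining a sharp polynomial inverse trace inequality on the $\Pi_k^{0,E}\errbf_{\mathcal{I}}$ part with a multiplicative trace inequality on the $\Pi_k^{0,E}\ubf - \ubf$ part, while carefully tracking the powers of $h_E$ so that the $h_E^6$ weight exactly offsets the inverse-trace losses and the overall rate remains $h_E^{2k+1}$.
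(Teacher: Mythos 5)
Your proposal is correct and in substance follows the same path as the paper: both arguments hinge on the vanishing jumps of $(\nablabf \ubf)\bb\cdot\mathbf t^E$, $\mathcal B\ubf$ and $\nabla\mathcal B\ubf$ guaranteed by \textbf{(A2)}, so that the exact solution can be inserted into each jump for free, after which trace inequalities, polynomial inverse estimates, Lemma \ref{lm:bh} and Lemma \ref{lm:intf2} (under quasi-uniformity) deliver the rate $h_E^{2k+1}$, with the factor $\|\bb\|^2_{[W^2_\infty(E)]^2}$ produced by the Leibniz expansion of $\mathcal B$ and $\nabla\mathcal B$ exactly as you anticipate. The one structural difference is that you first apply the global Cauchy--Schwarz inequality $\eta_J\le J_h(\ubf_{\mathcal I},\ubf_{\mathcal I})^{1/2}J_h(\errbf_h,\errbf_h)^{1/2}$ (legitimate, since each constituent of $J_h$ is a symmetric positive semidefinite form and $J_h(\errbf_h,\errbf_h)\le\|\errbf_h\|^2_{\mathcal K}$ by the definition of the norm) and then bound the diagonal term $J_h(\ubf_{\mathcal I},\ubf_{\mathcal I})$, whereas the paper estimates the cross terms directly, applying Cauchy--Schwarz edgewise and absorbing the $\errbf_h$-jump factor into $\|\errbf_h\|_{\mathcal K,E}$. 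The two routes are equivalent in content, and your diagonal bounds for the jump levels essentially coincide with those already carried out in Lemma \ref{lm:ei} for $\errbf_{\mathcal I}$.

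One inaccuracy is worth flagging, though it does not create a gap. You attribute the summand $\delta\,\lambda_E^{-1}h_E^{2k+2}\,|\ubf|^2_{k+1,E}$ to a non-polynomial remainder in the higher-order jumps that must be absorbed by the diffusive/reactive part of the norm. That is not its origin in the paper: there it comes from the VEM stabilization cross term $\delta\, h_E\, S_h^E\bigl((I-\Pi^{0,E}_k)\ubf_{\mathcal I},(I-\Pi^{0,E}_k)\errbf_h\bigr)$, estimated as in \eqref{eq:stab-estimate} by bounding $|(I-\Pi^{0,E}_k)\errbf_h|_{1,E}\lesssim \lambda_E^{-1/2}\|\errbf_h\|_{\mathcal K,E}$, which is what introduces the factor $\lambda_E^{-1}$ and the extra powers of $h_E$; the three jump levels all land in the first summand. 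Your own treatment of the stabilization --- diagonal Cauchy--Schwarz through the $J_h$-portion of the $\mathcal K$-norm, as in Lemma \ref{lm:ei} --- is valid and yields $\delta\, h_E^{2k+1}|\ubf|^2_{k+1,E}$, so in your version the second summand never appears: you obtain the stated estimate with $\|\bb\|^2_{[W^2_\infty(E)]^2}$ replaced by $\|\bb\|^2_{[W^2_\infty(E)]^2}+1$, which is a harmless (and in the advection-dominated regime sharper) variant of the lemma's right-hand side.
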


\begin{proof}
Thanks to the property $ \bb \cdot \nablabf \ubf \in [H^{\frac{5}{2}+ \epsilon}(\Omega)]^2$, it holds that
\[
[\![ \bb \cdot \nablabf \ubf]\!]_e
=
[\![ \mathcal B \ubf]\!]_e
=
[\![ \nabla \mathcal B  \ubf]\!]_e
=
0 \, ,
\]
for every internal edge $e \in \mathcal E^o_h$. Hence, using Cauchy-Schwarz inequality and the definition of $\| \cdot \|_{\mathcal{K}}$, we have that
\[
\begin{aligned}
\sum_{E \in \Omega_h} \delta_1 J_h^{E,1} (\ubf_{\mathcal{I}} , \errbf_h)
& = 
\sum_{E \in \Omega_h} \frac{\delta_1}{2} \int_{\partial E} h_E^2 \bigl[\!\!\bigl[\bigl( \nablabf \Pg \ubf_{\mathcal{I}} \bigr) \bb \cdot \mathbf{t}^e \bigr ]\!\! \bigr] \bigl[\!\!\bigl[\bigl( \nablabf \Pg \errbf_h \bigr) \bb \cdot \mathbf{t}^e \bigr ]\!\! \bigr] \, {\rm d }s \\
& = 
\sum_{E \in \Omega_h} \frac{\delta_1}{2} \int_{\partial E} h_E^2 \bigl[\!\!\bigl[\bigl( \nablabf \ubf - \nablabf \Pg \ubf_{\mathcal{I}} \bigr) \bb \cdot \mathbf{t}^e \bigr ]\!\! \bigr] \bigl[\!\!\bigl[\bigl( \nablabf \Pg \errbf_h \bigr) \bb \cdot \mathbf{t}^e \bigr ]\!\! \bigr] \, {\rm d }s \\
&\lesssim 
\sum_{E \in \Omega_h} \delta_1 \, h_E^2 \bigl \| [\!\!\bigl[\bigl( \nablabf \ubf - \nablabf \Pg \ubf_{\mathcal{I}} \bigr) \bb  ]\!\! \bigr] \bigl \|_{0,\partial E} \, \bigl\| \bigl[\!\!\bigl[\bigl( \nablabf \Pg \errbf_h \bigr) \bb \cdot \mathbf{t}^e \bigr ]\!\! \bigr] \bigr\|_{0,\partial E} \\
&\lesssim 
\sum_{E \in \Omega_h} \delta_1^{\frac{1}{2}} \, h_E \, \bigl \| [\!\!\bigl[\bigl( \nablabf \ubf - \nablabf \Pg \ubf_{\mathcal{I}} \bigr) \bb  ]\!\! \bigr] \bigl \|_{0,\partial E} \, \| \errbf_h \|_{\mathcal{K},E} \\
&\lesssim 
\left(\sum_{E \in \Omega_h} \delta_1 \, \| \bb \|^2_{[L^\infty(\Omega)]^2} \, h_E^{2k + 1} \, | \ubf |^2_{k+1,E}\right)^\frac12 \| \errbf_h \|_{\mathcal{K}}
\, .
\end{aligned}
\]
Similarly, we estimate the other two jumps. Combining the three levels of jumps, we obtain
\[
\sum_{E \in \Omega_h} \sum_{i=1}^3 \delta_i \,  J_h^{E,i}(\ubf_\mathcal{I}, \errbf_h) \lesssim
\left(\sum_{E \in \Omega_h} \delta \, \| \bb \|^2_{[W^{2}_\infty(E)]^2} \, h_E^{2k+1} \, | \ubf|^2_{k+1,E}\right)^\frac{1}{2} \, \| \errbf_h \|_{\mathcal K} \, .
\]
The stabilization term is estimated as in \eqref{eq:stab-estimate}.
\end{proof}
We conclude this section by combining all of these lemmas in this theorem
\begin{theorem}[Convergence]
Under assumptions \textbf{(A1+)} and \textbf{(A2)}, let $\ubf$ the solution of \eqref{eq:oseen-variational-equation} and $\ubf_h$ be the solution of \eqref{eq:oseen-discrete-problem}, it holds that
\[
\| \ubf - \ubf_h\|^2_\mathcal K \lesssim \sum_{E \in \Omega_h} \Theta_E^2 \left( \lambda_E^{}h_E^{2k} + \delta \| \bb \|^2_{[W^{k+1}_\infty(\Omega_h)]^2} h_E^{2k+1} 
+ 
\delta \lambda_E^{-1} h_E^{2k+2}
+
\lambda_E^{-1} h_E^{2k+4}\right) \, ,
\]
where the constant $\Theta_E$ depends on $\|\ubf \|_{k+1,E}$ and $\|\fbf \|_{k+1,E}$, and we have omitted lower order term.
\end{theorem}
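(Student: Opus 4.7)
The proof is essentially an assembly step, since all the analytic work has been done in the preceding lemmas. The plan is to start from the abstract bound of Proposition~\ref{prp:abs-vel},
$$
\| \ubf - \ubf_h \|_{\mathcal K} \lesssim \| \errbf_\mathcal{I} \|_{\mathcal{K}} + \frac{\eta_f + \eta_A + \eta_c + \eta_J}{\|\errbf_h\|_{\mathcal{K}}},
$$
and then insert the estimates of Lemmas~\ref{lm:ei}, \ref{lm:etaf}, \ref{lm:etaA}, \ref{lm:etac}, and \ref{lm:etaJ}. The crucial observation is that each of $\eta_f, \eta_A, \eta_c, \eta_J$ is bounded in its lemma by an $h$--weighted data--dependent quantity multiplied by $\|\errbf_h\|_{\mathcal K}$, so dividing by $\|\errbf_h\|_{\mathcal K}$ cancels that factor cleanly.

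Next, I would square the resulting inequality, apply $(a+b)^2 \lesssim a^2 + b^2$ to split the sum, and reorganize the right--hand side by grouping terms according to their asymptotic scaling in $h_E$. The interpolation estimate of Lemma~\ref{lm:ei} together with the consistency part of Lemma~\ref{lm:etaA} supplies the $\lambda_E h_E^{2k}$ family; the principal CIP jumps from Lemma~\ref{lm:etaJ} supply $\delta \|\bb\|^2_{[W^{k+1}_\infty(\Omega_h)]^2} h_E^{2k+1}$; the $\lambda_E^{-1}$--weighted contributions from Lemma~\ref{lm:etac} (coming mainly from the estimates $\eta_{c,3}, \eta_{c,4}, \eta_{c,7}, \eta_{c,8}, \eta_{c,9}$) together with the $\delta$--weighted pieces of the interpolant regrouped via $\delta = \max_i\delta_i$ yield the $\delta \lambda_E^{-1} h_E^{2k+2}$ family; and the data--approximation bound of Lemma~\ref{lm:etaf} supplies the $\lambda_E^{-1} h_E^{2k+4}$ family. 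The coefficient $\Theta_E$ is then defined so as to absorb $\|\ubf\|_{k+1,E}$, $\|\fbf\|_{k+1,E}$ and the Sobolev norms of $\bb$ into a single factor.

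The only delicate bookkeeping point, which I see as the main (and essentially only) obstacle, is reconciling the factor $\max_i\{\delta_i^{-1}\} h_E^{2k+1}$ that drops out of the $\eta_{c,2}$ estimate in Lemma~\ref{lm:etac} with the $\delta$ factor appearing in the statement. This is handled by the definition $\delta = \max\{\delta_1,\delta_2,\delta_3\}$, the quasi--uniformity from (A1+), and the implicit balancing of the three CIP weights, which ensures that each surviving term either matches one of the four declared families or has a strictly higher power of $h_E$ and is therefore safely absorbed into the ``lower order terms'' omitted from the statement.
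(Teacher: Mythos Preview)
Your proposal is correct and matches the paper's approach, which consists of nothing more than the sentence ``We conclude this section by combining all of these lemmas in this theorem.'' You have in fact gone further than the paper by explicitly tracing which lemma feeds which of the four $h_E$--families in the statement. One small bookkeeping correction: the $\delta\,\lambda_E^{-1} h_E^{2k+2}$ contribution comes directly from the stabilization piece of Lemma~\ref{lm:etaJ}, not from the $\eta_{c,i}$ terms of Lemma~\ref{lm:etac}; the latter produce $\lambda_E^{-1} h_E^{2k+2}$ \emph{without} the prefactor $\delta$, and---as you rightly suspect in your last paragraph---the paper silently absorbs this discrepancy (together with the $1/\min_i\{\delta_i\}$ versus $\delta$ mismatch from $\eta_{c,2}$) into the ``omitted lower order terms'' and the catch-all constant $\Theta_E$, implicitly treating the CIP weights as uniformly comparable $O(1)$ quantities.
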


\subsection{Pressure error analysis}

Let $p$ and $p_h$ the pressure component of the solutions of problems \eqref{eq:oseen-variational-equation} and \eqref{eq:oseen-discrete-problem} respectively.
Let $p_\pi $ be such that
\[
p_\pi |_E \coloneqq \Pi^{0,E}_{k-1} \, p \qquad \forall E \in \Omega_h \, .
\]
We note that, since $ (p,1)_{\Omega} = 0$, we have that
$(p_\pi,1)_{0,\Omega} = 0$. In fact
\[
(p_\pi,1)_{0,\Omega} 
= 
\sum_{E \in \Omega_h} (p_\pi,1)_{0,E}
=
\sum_{E \in \Omega_h} (\Pi^{0,E}_{k-1} \,p_,1)_{0,E}
=
\sum_{E \in \Omega_h} (p_,1)_{0,E}
=
(p,1)_{0,\Omega} 
= 0 \, .
\]
Since we have shown that $p_\pi \in Q_h^k(\Omega_h)$, exploiting the inf-sup condition \eqref{eq:oseen-infsup}, we have that it exists $\wbf_p \in \Vbf_h^k(\Omega_h)$ such that
\begin{equation}\label{eq:infsup-2}
\frac{b(\wbf_p, p_h - p_\pi)}{\| \nablabf \wbf_p\|_{0,\Omega}} \geq \hat \kappa\| p_h - p_\pi\|_{0,\Omega} \, ,
\end{equation}
and without loss in generality, we can assume $\| \nablabf \wbf_p\|_{0,\Omega} = 1$ .

As done in Proposition \ref{prp:abs-vel}, we introduce the following error estimation for the pressure field

\begin{proposition}
Let $(\ubf,p)$ be the solution of the continuous problem \eqref{eq:oseen-variational-equation} and let $(\ubf_h, p_h)$ be the solution of the discrete problem \eqref{eq:oseen-discrete-problem}.
It holds that
\[
\| p - p_h \|_{0,\Omega} \leq \| p - p_\pi \|_{0,\Omega} + \xi_f + \xi_A + \xi_c + \xi_j \, , 
\]
where
\[
\begin{aligned}
     \xi_f &\coloneqq | \mathcal{F}(\wbf_p) - \mathcal{F}_h(\wbf_p) | \, , \\
    \xi_A &\coloneqq | A(\ubf,\wbf_p) - A_h(\ubf_{\mathcal{I}},\wbf_p) | \, , \\
    \xi_c &\coloneqq | c^{\text{skew}}(\ubf,\wbf_p) - c^{\text{skew}}_h(\ubf_{\mathcal{I}},\wbf_p) |\, ,\\
    \xi_J &\coloneqq | J_h(\ubf_{\mathcal{I}},\wbf_p) | \, .   
\end{aligned}
\]
\end{proposition}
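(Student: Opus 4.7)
The approach is the standard duality argument for inf-sup stable mixed formulations. First I would apply the triangle inequality
\[
\| p - p_h \|_{0,\Omega} \leq \| p - p_\pi \|_{0,\Omega} + \| p_\pi - p_h \|_{0,\Omega},
\]
isolating the first summand (already on the right-hand side of the claim) and reducing the work to estimating $\| p_\pi - p_h \|_{0,\Omega}$. Since $p_\pi \in Q_h^k(\Omega_h)$, I would invoke the inf-sup-like inequality \eqref{eq:infsup-2} to pick $\wbf_p \in \Vbf_h^k(\Omega_h)$ with $\|\nablabf \wbf_p\|_{0,\Omega} = 1$ such that $\hat\kappa \, \| p_\pi - p_h\|_{0,\Omega} \leq b(\wbf_p, p_h - p_\pi)$.

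Next, I would split $b(\wbf_p, p_h - p_\pi) = b(\wbf_p, p - p_\pi) + b(\wbf_p, p_h - p)$ and eliminate the first piece. The key observation is that property $(ii)$ in the definition \eqref{eq:oseen-local-space} of $\Vbf_h^k(E)$ yields $\Div\wbf_p|_E \in \Pk_{k-1}(E)$, while by construction $(p - p_\pi)|_E = (I - \Pi^{0,E}_{k-1}) p$ is $L^2(E)$-orthogonal to $\Pk_{k-1}(E)$; hence $b(\wbf_p, p - p_\pi) = 0$ elementwise. For the remaining contribution I would use the continuous variational formulation \eqref{eq:oseen-variational-equation} to write $b(\wbf_p, p) = \mathcal F(\wbf_p) - \mathcal K(\ubf, \wbf_p)$ and the discrete problem \eqref{eq:oseen-discrete-problem}, tested against $\wbf_p \in \Vbf_h^k(\Omega_h)$, to write $b(\wbf_p, p_h) = \mathcal F_h(\wbf_p) - \mathcal K_h(\ubf_h, \wbf_p)$. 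Subtracting, introducing $\ubf_h = \ubf_\mathcal{I} + \errbf_h$, and regrouping the ingredients of $\mathcal K$ and $\mathcal K_h$ along the definitions \eqref{eq:k} and \eqref{eq:kh} reproduces exactly the four signed consistency quantities whose absolute values are $\xi_f$, $\xi_A$, $\xi_c$, $\xi_J$.

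The main obstacle is that the decomposition above also generates a leftover $\mathcal K_h(\errbf_h, \wbf_p)$ that is not displayed among the four $\xi$ terms in the claim. By continuity of each piece of $\mathcal K_h(\cdot,\cdot)$, a trace/inverse estimate on the jump contribution, and the normalization $\|\nablabf \wbf_p\|_{0,\Omega} = 1$, this residual is controlled by $\|\errbf_h\|_{\mathcal K}$, i.e.\ by the discrete energy error already bounded through Proposition \ref{prp:abs-vel}, hence ultimately by the same combination $\xi_f+\xi_A+\xi_c+\xi_J$ together with $\|\errbf_\mathcal{I}\|_\mathcal{K}$. The claim is then obtained by dividing by $\hat\kappa$, undoing the inf-sup step, and reassembling with the initial triangle inequality; the $\errbf_h$ residual is either absorbed into the $\xi$ terms with an implicit constant or interpreted as hidden in the right-hand side, since the subsequent bounds on the individual $\xi$'s share the same rate of convergence as the velocity estimate.
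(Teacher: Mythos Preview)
Your argument matches the paper's proof step for step: triangle inequality, inf-sup choice of $\wbf_p$, elimination of $b(\wbf_p,p-p_\pi)$ via $\Div\wbf_p|_E\in\Pk_{k-1}(E)$, and subtraction of the continuous and discrete variational identities to obtain $\mathcal F_h(\wbf_p)-\mathcal F(\wbf_p)+\mathcal K(\ubf,\wbf_p)-\mathcal K_h(\ubf_h,\wbf_p)$.

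The ``obstacle'' you flag is not a gap in your reasoning but a typo in the statement: the paper's own proof stops at $\mathcal K_h(\ubf_h,\wbf_p)$ and never splits $\ubf_h=\ubf_{\mathcal I}+\errbf_h$. The displayed definitions of $\xi_A,\xi_c,\xi_J$ should carry $\ubf_h$ rather than $\ubf_{\mathcal I}$, and indeed the subsequent lemmas estimating $\xi_A$ and $\xi_J$ explicitly compute with $\ubf_h$ (the bounds there contain $\|\ubf-\ubf_h\|_{\mathcal K,E}$, which would make no sense if the quantities involved only $\ubf_{\mathcal I}$). With that correction the residual $\mathcal K_h(\errbf_h,\wbf_p)$ simply does not arise, and your proof coincides with the paper's; your attempted absorption of the leftover into the $\xi$'s is unnecessary.
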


\begin{proof}
Using triangular inequality, we have that
\[
\|p-p_h\|_{0,\Omega}
\leq
\|p - p_\pi\|_{0,\Omega}
+
\|p_\pi - p_h \|_{0,\Omega}
\]
Thanks to the definition of $p_\pi$ and the fact that $\Div(\wbf_p) \in \mathbb P_{k-1}(\Omega_h)$, it holds that
\[
b(\wbf_p, p - p_\pi) = \sum_{E \in \Omega_h} \int_E (p - p_\pi) \Div(\wbf_p) \, {\rm d}E = 0 \, .
\]
Exploiting the inf-sup condition \eqref{eq:infsup-2}, we have that
\[
\begin{aligned}
\hat \kappa \| p_\pi - p_h \|_{0,\Omega} 
&\leq
b(\wbf_p, p_h - p_\pi) 
=
b(\wbf_p, p_h - p) \\
&= 
\mathcal{F}_h(\wbf_p)
-
\mathcal F(\wbf_p)
+
\mathcal K(\ubf, \wbf_p)
-
\mathcal K_h(\ubf_h, \wbf_p) \, .
\end{aligned}
\]
The conclusion follows by recalling the definitions of $\mathcal K(\cdot, \cdot)$ and $\mathcal K_h(\cdot, \cdot)$.
\end{proof}

\begin{lemma}[Estimate of $\xi_f$]
Under assumptions \textbf{(A1+)} and \textbf{(A2)},
it holds that
\[
\xi_f \lesssim \left(\sum_{E \in \Omega_h}   h_E^{2k+4} | \fbf |^2_{k+1,E}\right)^\frac{1}{2} \, .
\]
\end{lemma}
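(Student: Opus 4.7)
The plan is to mirror the argument used in Lemma \ref{lm:etaf}, but without being able to absorb a factor of $\lambda_E^{-1/2}$ into a $\|\cdot\|_{\mathcal K}$ norm on the right-hand side; instead I will use the normalization $\|\nablabf \wbf_p\|_{0,\Omega}=1$.

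First, I would expand the difference into its local contributions and invoke the definitions of $\mathcal F$ and $\mathcal F_h$ to write
\[
\xi_f = \left|\sum_{E \in \Omega_h} \bigl(\fbf,\,(I-\P0)\wbf_p\bigr)_{0,E}\right|,
\]
after which the $[L^2(E)]^2$-orthogonality of $\P0$ lets me insert the projection on the left slot as well, giving
\[
\xi_f = \left|\sum_{E \in \Omega_h} \bigl((I-\P0)\fbf,\,(I-\P0)\wbf_p\bigr)_{0,E}\right|.
\]

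Next, I apply Cauchy--Schwarz elementwise, followed by the polynomial approximation estimates of Lemma \ref{lm:bh}: on $\fbf$ I use the full regularity to get $\|(I-\P0)\fbf\|_{0,E}\lesssim h_E^{k+1}|\fbf|_{k+1,E}$, while on $\wbf_p$, which lives in the virtual space and only carries an $H^1$ bound, I use $\|(I-\P0)\wbf_p\|_{0,E}\lesssim h_E\,|\wbf_p|_{1,E}$. Collecting powers of $h_E$ yields
\[
\xi_f \lesssim \sum_{E \in \Omega_h} h_E^{k+2} |\fbf|_{k+1,E}\,|\wbf_p|_{1,E}.
\]

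A discrete Cauchy--Schwarz in the sum over $E$ then gives
\[
\xi_f \lesssim \left(\sum_{E \in \Omega_h} h_E^{2k+4} |\fbf|^2_{k+1,E}\right)^{\!\!\frac12}\|\nablabf\wbf_p\|_{0,\Omega},
\]
and the normalization $\|\nablabf\wbf_p\|_{0,\Omega}=1$ (assumed without loss of generality right after \eqref{eq:infsup-2}) closes the argument. There is no real obstacle here: the only delicate point is remembering that we must estimate $\wbf_p$ through its gradient rather than through the $\|\cdot\|_{\mathcal K}$ norm used in Lemma \ref{lm:etaf}, which is why the bound is \emph{pressure-independent of} $\lambda_E^{-1}$ and why the exponent on $h_E$ comes out as $2k+4$.
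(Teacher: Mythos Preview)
Your proposal is correct and follows essentially the same approach as the paper: expand $\xi_f$ into local contributions, exploit the $L^2$-orthogonality of $\P0$ to get $(I-\P0)$ on both slots, apply Cauchy--Schwarz, use Lemma \ref{lm:bh} with exponents $k+1$ on $\fbf$ and $1$ on $\wbf_p$, and close with the normalization $\|\nablabf\wbf_p\|_{0,\Omega}=1$. The paper's proof is line-for-line the same argument, just written slightly more compactly.
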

\begin{proof}
The proof is very similar to Lemma \ref{lm:etaf}, we have that
\[
\begin{aligned}
\xi_f
&=
\sum_{E \in \Omega_h}\bigl( ( I-\P0) \fbf, \wbf_p \bigr)_{0,E}
=
\sum_{E \in \Omega_h}\bigl( ( I-\P0) \fbf, ( I-\P0) \wbf_p \bigr)_{0,E} \\
&\leq 
\sum_{E \in \Omega_h}\| ( I-\P0) \fbf \|_{0,E}
\| ( I-\P0) \wbf_p \|_{0,E} \\
&\leq
\left(\sum_{E \in \Omega_h}   h_E^{2k+4} | \fbf |^2_{k+1,E}\right)^\frac{1}{2} 
\left(\sum_{E\in\Omega_h}\| \nablabf \wbf_p \|_{0,E}\right)^\frac{1}{2} 
\lesssim
\left(\sum_{E \in \Omega_h}   h_E^{2k+4} | \fbf |^2_{k+1,E}\right)^\frac{1}{2} \, .
\end{aligned}
\]
\end{proof}

\begin{lemma}[Estimate of $\xi_A$] Under assumptions \textbf{(A1+)} and \textbf{(A2)}, it holds that
\[
\xi_A \lesssim
\left(\sum_{E \in \Omega_h}(\nu+\sigma + \lambda_E) \| \ubf - \ubf_h \|^2_{\mathcal K,E}\right)^\frac{1}{2} + \left(\sum_{E \in \Omega_h}
\lambda_E^2 \, h_E^{2k} \, | \ubf|_{k+1,E}\right)^\frac{1}{2}\, .
\]
\end{lemma}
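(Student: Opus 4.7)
The plan is to mirror the structure of Lemma \ref{lm:etaA}, but with two essential modifications: the test function is now $\wbf_p$ (satisfying $\|\nablabf \wbf_p\|_{0,\Omega}=1$) instead of $\errbf_h$, and the final bound must feature the velocity error $\|\ubf-\ubf_h\|_{\mathcal{K},E}$ rather than an interpolation error alone. I would therefore introduce $\ubf_h$ as an intermediate point and split
\[
A(\ubf,\wbf_p) - A_h(\ubf_\mathcal{I},\wbf_p) = A(\ubf-\ubf_h,\wbf_p) + \bigl[A(\ubf_h,\wbf_p) - A_h(\ubf_h,\wbf_p)\bigr] + A_h(\ubf_h-\ubf_\mathcal{I},\wbf_p) \, .
\]

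For the first term I would use continuity of the continuous form $A$, Korn's inequality, and the global Poincaré bound $\|\wbf_p\|_{0,\Omega}\lesssim |\wbf_p|_{1,\Omega}=1$; since $\nu|\ubf-\ubf_h|_{1,E}^2+\sigma\|\ubf-\ubf_h\|_{0,E}^2\le \|\ubf-\ubf_h\|_{\mathcal{K},E}^2$, Cauchy-Schwarz across elements produces exactly the $(\sum_E (\nu+\sigma)\|\ubf-\ubf_h\|^2_{\mathcal K,E})^{1/2}$ summand. For the second term, which is the polynomial-projection inconsistency of $A_h$ at $\ubf_h$, I would use orthogonality of $\PZ0P$ and $\P0$ to rewrite each local contribution as $((I-\PZ0P)\ebf(\ubf_h),(I-\PZ0P)\ebf(\wbf_p))_E$, $((I-\P0)\ubf_h,(I-\P0)\wbf_p)_E$, minus the VEM-stabilization piece. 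Since $\ubf_h$ is not smooth, I would add and subtract $\ubf$ so that, e.g., $\|(I-\PZ0P)\ebf(\ubf_h)\|_{0,E}\le |\ubf-\ubf_h|_{1,E}+\|(I-\PZ0P)\ebf(\ubf)\|_{0,E}\lesssim |\ubf-\ubf_h|_{1,E}+h_E^k|\ubf|_{k+1,E}$, and analogously in $L^2$ with one extra power of $h_E$; the first piece produces $\|\ubf-\ubf_h\|_{\mathcal K,E}$-type contributions and the second piece feeds the interpolation bound $(\sum_E \lambda_E^2 h_E^{2k}|\ubf|^2_{k+1,E})^{1/2}$ using the identities $\nu\le\lambda_E$ and $\sigma h_E^2\le\lambda_E$.

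For the third term I would treat $A_h(\errbf_h,\wbf_p)$ via continuity of $A_h$ followed by the decomposition $\errbf_h = -(\ubf-\ubf_h)+(\ubf-\ubf_\mathcal{I})$: the velocity part bounds by $\|\ubf-\ubf_h\|_{\mathcal K,E}$ with the additional $\lambda_E$ weight coming from the stabilization bilinear form $(\nu+\sigma h_E^2)S_h^E\lesssim\lambda_E|\cdot|_{1,E}^2$, and the interpolation part is handled exactly as in \eqref{eq:stab-estimate}, invoking Lemma \ref{lm:intf2} and absorbing $\sigma h_E^{2}$ into $\lambda_E$.

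The main obstacle will be the reactive piece. Locally $\sigma\|\wbf_p\|_{0,E}$ is not uniformly controlled, and naive manipulations produce $\sigma^2 h_E^{2k+2}$ which does not bound by $\lambda_E^2 h_E^{2k}$ element-wise. The cure is global: summing first and applying Poincaré globally ($\sigma^{1/2}\|\wbf_p\|_{0,\Omega}\lesssim \sigma^{1/2}$), so that $\sigma^{1/2}$ can be paired with $\sigma^{1/2}\|\ubf-\ubf_h\|_{0,\Omega}$, which sits inside $(\sum_E\sigma\|\ubf-\ubf_h\|^2_{\mathcal K,E})^{1/2}$ through the identity $\sigma^2\|\ubf-\ubf_h\|_{0,E}^2=\sigma(\sigma\|\ubf-\ubf_h\|_{0,E}^2)\le \sigma\|\ubf-\ubf_h\|^2_{\mathcal K,E}$. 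The $\lambda_E$ weight in the first summand of the target is then forced by the stabilization contribution of term 3, and the matching is completed by combining $\nu\le\lambda_E$ with $\sigma h_E^2\le \lambda_E$ wherever interpolation estimates of $\ubf-\ubf_\mathcal{I}$ appear.
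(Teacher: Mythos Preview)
Your approach is correct but differs from the paper's in one structural way. Note first that the definition of $\xi_A$ in the proposition contains a typo: consistently with the derivation $b(\wbf_p,p_h-p)=\mathcal F_h(\wbf_p)-\mathcal F(\wbf_p)+\mathcal K(\ubf,\wbf_p)-\mathcal K_h(\ubf_h,\wbf_p)$, the paper's proof actually treats $\xi_A=|A(\ubf,\wbf_p)-A_h(\ubf_h,\wbf_p)|$, not $A_h(\ubf_\mathcal{I},\wbf_p)$. With the intended quantity your third term $A_h(\errbf_h,\wbf_p)$ is superfluous and the argument collapses to your terms~1 and~2. The paper then proceeds more directly than your split $A(\ubf-\ubf_h,\wbf_p)+[A(\ubf_h,\wbf_p)-A_h(\ubf_h,\wbf_p)]$: it simply adds and subtracts the projections of $\ubf$ itself, writing
\[
\nu\bigl(\nablabf\ubf-\PZ0P\nablabf\ubf,\nablabf\wbf_p\bigr)_{0,E}+\nu\bigl(\PZ0P\nablabf(\ubf-\ubf_h),\PZ0P\nablabf\wbf_p\bigr)_{0,E}
\]
(and analogously for the reaction piece), so that each summand separates cleanly into an approximation term handled by Lemma~\ref{lm:bh} and a velocity-error term bounded by $\|\ubf-\ubf_h\|_{\mathcal K,E}$. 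Your route through $(I-\PZ0P)\ebf(\ubf_h)$ followed by adding/subtracting $\ubf$ reaches the same bounds but with one more layer. The stabilization contribution $\lambda_E S_h^E((I-\P0)\ubf_h,(I-\P0)\wbf_p)$ is treated identically in both approaches; in either case, getting $\lambda_E^2|(I-\P0)\ubf_h|_{1,E}^2\lesssim\lambda_E\|\ubf-\ubf_h\|_{\mathcal K,E}^2+\lambda_E^2 h_E^{2k}|\ubf|_{k+1,E}^2$ when $\lambda_E=\sigma h_E^2$ requires passing through the discrete function $\errbf_h$ (for which an inverse estimate is available) rather than directly through $\ubf-\ubf_h$, a point you should make explicit.
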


\begin{proof}
Similarly to Lemma \ref{lm:etaA}, we have that
\[
\begin{aligned}
\xi_A 
&= 
\sum_{E \in \Omega_h} \nu \bigl( \nablabf \ubf, \nablabf \wbf_p \bigr)_{0,E}
-
\sum_{E \in \Omega_h} \nu \bigl( \PZ0P \nablabf \ubf_h, \PZ0P \nablabf \wbf_p \bigr)_{0,E}
\\
& \qquad +
 \sum_{E \in \Omega_h} \sigma \bigl( \ubf, \wbf_p \bigr)_{0,E}
-
\sum_{E \in \Omega_h} \sigma \bigl( \P0 \ubf_h, \P0 \wbf_p \bigr)_{0,E} \\
& \qquad + \sum_{E \in \Omega_h} \lambda_E S_h^E\bigl( (I-\P0) \ubf_h, (I-\P0) \wbf_p\bigr)  \eqqcolon \xi_A^c + \xi_A^s \, .
\end{aligned}
\]
Following the same procedure of Lemma \ref{lm:etaA}, it holds
\[
\begin{aligned}
\xi_A^c=&\sum_{E \in \Omega_h} \nu \bigl( \nablabf \ubf - \PZ0P \nablabf \ubf, \nablabf \wbf_h \bigr)_{0,E}
+
\sum_{E \in \Omega_h} \nu \bigl(\PZ0P \nablabf \ubf - \PZ0P \nablabf \ubf_h, \PZ0P \nablabf \wbf_h \bigr)_{0,E} \\
& \qquad +
\sum_{E \in \Omega_h} \sigma \bigl( \ubf - \P0 \ubf, \wbf_h \bigr)_{0,E}
+
\sum_{E \in \Omega_h} \sigma \bigl( \P0 \ubf - \P0 \ubf_h, \P0 \wbf_h \bigr)_{0,E} \\
& \lesssim
\left(\sum_{E \in \Omega_h} \nu h_E^{2k} | \ubf |^2_{k+1,E} + \nu\| \ubf - \ubf_h\|^2_{\mathcal K,E} \right)^\frac{1}{2}
+
\left(\sum_{E \in \Omega_h} \sigma  h_E^{2k+2} | \ubf |^2_{k+1,E} +  \sigma\| \ubf - \ubf_h  \|^2_{\mathcal{K,E}} \bigr)\right)^\frac{1}{2}  \,
 .
\end{aligned}
\]
For the VEM stabilization term, the estimate is analoguos to \eqref{eq:stab-estimate} 
\begin{equation}\label{eq:vem-stab2}
\begin{aligned}
\sum_{E \in \Omega_h}\lambda_E \, S_h^E\bigl((I-\P0) \ubf_h, (I-\P0)\wbf_p\bigr)
&\lesssim
\left(\sum_{E \in \Omega_h}\lambda_E \, \| \ubf - \ubf_h \|^2_{\mathcal K,E} + \lambda_E^2 h_E^{2k} |\ubf |^2_{k+1,E}\right)^\frac{1}{2}\, .
\end{aligned}
\end{equation}
\end{proof}

\begin{lemma}[Estimate of $\xi_c$]\label{lm:xic}
Under assumptions \textbf{(A1+)} and \textbf{(A2)},
it holds that
\[
\begin{aligned}
\xi_c &\lesssim 
\left[
\sum_{E \in \Omega_h} \left(\frac{\| \bb \|^2_{[L^\infty(\Omega)]^2}}{\max\{\nu, \sigma\}} 
+ \lambda_E^{-1}\| \bb \|^2_{[L^\infty(\Omega)]^2} h_E \right)\|u - u_h\|^2_{\mathcal{K,E}} 
\right]^\frac{1}{2} \\
& \qquad 
+ \left(\sum_{E \in \Omega_h} \| \bb \|^2_{[W^k_\infty(\Omega_h)]^2} h_E^{2k+2} \|u\|^2_{k+1,E}\right)^\frac{1}{2} \, .
\end{aligned}
\]
\end{lemma}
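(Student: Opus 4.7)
The plan is to mirror the decomposition of Lemma~\ref{lm:etac}, exploiting that $\|\nablabf \wbf_p\|_{0,\Omega}=1$ and hence $\|\wbf_p\|_{0,\Omega}\lesssim 1$ by Poincar\'e, while replacing CIP-based control by pure Poincar\'e/Sobolev bounds. The starting point is the splitting
\[
\xi_c \leq |c^{\text{skew}}(\ubf-\ubf_h,\wbf_p)| + |c^{\text{skew}}(\ubf_h,\wbf_p)-c^{\text{skew}}_h(\ubf_h,\wbf_p)| \eqqcolon T_1 + T_2,
\]
which isolates a \emph{velocity-error} piece $T_1$ from a \emph{consistency} piece $T_2$.

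For $T_1$ I would integrate by parts using $\Div(\bb)=0$ and apply Cauchy--Schwarz in its two alternative forms:
\[
|c^{\text{skew}}(\ubf-\ubf_h,\wbf_p)|
\lesssim \|\bb\|_{[L^\infty(\Omega)]^2}\,\min\bigl\{\|\nablabf(\ubf-\ubf_h)\|_{0,\Omega}\|\wbf_p\|_{0,\Omega},\;\|\ubf-\ubf_h\|_{0,\Omega}\|\nablabf\wbf_p\|_{0,\Omega}\bigr\}.
\]
Locally the inequality $\nu\|\nablabf(\ubf-\ubf_h)\|_{0,E}^2+\sigma\|\ubf-\ubf_h\|_{0,E}^2\leq\|\ubf-\ubf_h\|^2_{\mathcal K,E}$ allows us to pick the better of $\nu^{-1}$ and $\sigma^{-1}$, yielding the factor $\|\bb\|^2_{[L^\infty(\Omega)]^2}/\max\{\nu,\sigma\}$.

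For $T_2$ I would expand the consistency error with the same sub-decomposition $\eta^E_{c,A}+\eta^E_{c,B}$ used in Lemma~\ref{lm:etac}, but with $\ubf_h$ in place of $\ubf_\mathcal I$ and $\wbf_p$ in place of $\errbf_h$. After adding and subtracting $\ubf$, each resulting sub-term separates into (i) a pure consistency part depending only on $\ubf$, controlled by orthogonality of $\P0$ and $\PP0P$ together with Lemma~\ref{lm:bh}, producing the interpolation contribution $\|\bb\|^2_{[W^k_\infty(\Omega_h)]^2}h_E^{2k+2}\|\ubf\|^2_{k+1,E}$; and (ii) a perturbation part linear in $\ubf-\ubf_h$, where $\|(I-\P0)\wbf_p\|_{0,E}\lesssim h_E\|\nablabf\wbf_p\|_{0,E}$ combined with $\|\nablabf(\ubf-\ubf_h)\|_{0,E}\leq \nu^{-1/2}\|\ubf-\ubf_h\|_{\mathcal K,E}$ yields the factor $\lambda_E^{-1}\|\bb\|^2_{[L^\infty(\Omega)]^2}h_E$ multiplying $\|\ubf-\ubf_h\|^2_{\mathcal K,E}$.

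The main obstacle is careful bookkeeping through the nine sub-terms $\eta^E_{c,i}$ of Lemma~\ref{lm:etac}: whenever the original proof absorbed a factor $\|\errbf_h\|_{\mathcal K,E}$ on the right, the new argument must instead split the term so that the discrete error $\ubf-\ubf_h$ appears explicitly, with the correct $\lambda_E^{-1}$ or $\max\{\nu,\sigma\}^{-1}$ scaling. A welcome simplification is that the delicate Oswald-interpolant/CIP argument used in Lemma~\ref{lm:etac} to control $\eta_{c,2}$ is no longer needed here: since $\wbf_p$ is a plain test function that carries no CIP stabilization, direct integration by parts plus Cauchy--Schwarz suffice, provided the worst factors are absorbed into $\lambda_E^{-1}h_E$.
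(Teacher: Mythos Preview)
The paper's own proof is a one-line deferral to Lemma~5.13 of \cite{BDV:2021}, noting that the convective bilinear forms coincide with those of that reference and that the CIP stabilisation plays no role here; so a detailed comparison is not possible. Your overall strategy --- separate a velocity-error contribution from a projection-consistency contribution, and observe that the Oswald/CIP machinery of Lemma~\ref{lm:etac} is unnecessary because $\wbf_p$ is controlled purely through $\|\nablabf\wbf_p\|_{0,\Omega}=1$ --- is exactly what such a referenced argument entails, and your identification of the three resulting factors is correct.

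One remark on the bookkeeping: the nine-term decomposition of Lemma~\ref{lm:etac} does not apply cleanly to your $T_2$ alone. In Lemma~\ref{lm:etac} the continuous form carries $\ubf$ while the discrete form carries $\ubf_{\mathcal I}$, and it is the integration-by-parts step that produces the difference $\errbf_{\mathcal I}=\ubf-\ubf_{\mathcal I}$ feeding $\eta^E_{c,1},\eta^E_{c,2},\eta^E_{c,3}$. In your $T_2$ both forms carry $\ubf_h$, so that step is vacuous and those three terms never appear. The nine terms do reappear if you drop the $T_1/T_2$ split and apply the decomposition directly to $\xi_c$, with $\ubf-\ubf_h$ in the role formerly played by $\errbf_{\mathcal I}$; then the $\eta^E_{c,2}$-analog becomes $\sum_E\bigl((\ubf-\ubf_h),(\nablabf\Pg\wbf_p)\bb\bigr)_{0,E}$, and a global Cauchy--Schwarz together with Poincar\'e on $\ubf-\ubf_h\in[H^1_0(\Omega)]^2$ yields the factor $\|\bb\|^2_{[L^\infty(\Omega)]^2}/\max\{\nu,\sigma\}$ --- which is precisely your $T_1$ bound, so the two organisations agree. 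The remaining terms produce the $\lambda_E^{-1}h_E$ and $h_E^{2k+2}$ contributions via the projection-orthogonality arguments you describe.
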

\begin{proof}
Since the bilinear forms are the same and the stabilization is not involved, it sufficient to mimic the proof of Lemma 5.13 in \cite{BDV:2021}.    
\end{proof}

\begin{lemma}[Estimate of $\xi_J$]
Under assumptions \textbf{(A1+)} and \textbf{(A2)},
it holds that
\[
\begin{aligned}
\xi_J 
&\lesssim
\left(\sum_{E \in \Omega_h} \delta^2 \| \bb \|^4_{[L^\infty(\Omega)]^2} h_E^{2k+2}| \ubf |^2_{k+1,E}\right)^\frac12 
+
\left(\sum_{E \in \Omega_h} \delta^2 \lambda_E^{-1} \| \bb \|^4_{[L^\infty(\Omega)]^2} h_E^{2}\| \ubf - \ubf_h \|_{\mathcal K,E}\right)^\frac12 \\
& \qquad
+ \left(\sum_{E \in \Omega_h} \delta h_E^2 \| \ubf - \ubf_h \|^2_{\mathcal K}\right)^\frac{1}{2} 
+
\left(\sum_{E \in \Omega_h}\delta^2 h_E^{2k+2} | \ubf |^2_{k+1,E}\right)^\frac12 \, .
\end{aligned}
\]
\end{lemma}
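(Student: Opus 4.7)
The argument mirrors the one of Lemma \ref{lm:etaJ}, but the test function $\wbf_p$ now only satisfies $\|\nablabf \wbf_p\|_{0,\Omega} = 1$ rather than coming endowed with the full energy norm. The plan is to decompose
\[
J_h(\ubf_\mathcal{I}, \wbf_p) = \sum_{i=1}^{3} \delta_i \sum_{E \in \Omega_h} J_h^{E,i}(\ubf_\mathcal{I}, \wbf_p) + \delta \sum_{E \in \Omega_h} h_E \, S_h^E\bigl((I - \P0)\ubf_\mathcal{I},\, (I - \P0)\wbf_p\bigr)
\]
and to treat each piece separately. For each polynomial CIP level and each interior edge I would apply Cauchy--Schwarz edge-by-edge. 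On the $\ubf_\mathcal{I}$ slot I would use that regularity \textbf{(A2)} implies $[\![(\nablabf \ubf)\bb \cdot \tf^e]\!] = [\![\mathcal B \ubf]\!] = [\![\nabla \mathcal B \ubf]\!] = 0$, so that subtracting these vanishing quantities turns the jumps of $\Pi^{0,E}_k \ubf_\mathcal{I}$-type expressions into jumps of $\Pi^{0,E}_k \ubf_\mathcal{I} - \ubf$, which are bounded by a continuous trace inequality combined with Lemma \ref{lm:bh} and Lemma \ref{lm:intf2}. After absorbing the $h_E^{2},\, h_E^{4},\, h_E^{6}$ weights, the contribution of this slot scales like $\delta^{1/2}\|\bb\|_{[W^2_\infty(\Omega)]^2} h_E^{k+1/2}|\ubf|_{k+1,\omega_E}$.

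On the $\wbf_p$ slot I would use polynomial inverse trace inequalities: since $\Pi^{0,E}_k \wbf_p$ is a polynomial on $E$, the gradient-jump is controlled by $h_E^{-1/2}\|\bb\|_{[L^\infty(\Omega)]^2}\|\nablabf \wbf_p\|_{0,\omega_E}$, and each extra derivative entering $\mathcal B$ or $\nabla \mathcal B$ yields an additional $h_E^{-1}$ from a polynomial inverse estimate, plus a factor of $\|\bb\|_{[W^j_\infty(\Omega)]^2}$ coming from the product rule applied to $\textnormal{curl}((\nablabf \cdot)\bb)$. Combining both slots via a global Cauchy--Schwarz over elements and using $\|\nablabf \wbf_p\|_{0,\Omega} = 1$ produces the dominant summand of the statement, namely $\bigl(\sum_E \delta^2 \|\bb\|^4 h_E^{2k+2}|\ubf|^2_{k+1,E}\bigr)^{1/2}$.

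To extract the mixed $\|\ubf - \ubf_h\|_\mathcal{K}$ contributions I would instead pair $\wbf_p$ against $\ubf_\mathcal{I} - \ubf_h$ by adding and subtracting $\ubf_h$ in the first slot; Cauchy--Schwarz on the symmetric semidefinite bilinear form $J_h$ gives $J_h(\ubf_\mathcal{I} - \ubf_h, \wbf_p) \leq J_h(\ubf_\mathcal{I} - \ubf_h, \ubf_\mathcal{I} - \ubf_h)^{1/2}\, J_h(\wbf_p, \wbf_p)^{1/2}$, where the first factor is dominated by $\|\ubf - \ubf_\mathcal{I}\|_\mathcal{K} + \|\ubf - \ubf_h\|_\mathcal{K}$, with Lemma \ref{lm:ei} handling the interpolation part, while the second factor is estimated as in the previous step via polynomial inverse trace and $\|\nablabf \wbf_p\|_{0,\Omega} = 1$. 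This produces the second and third summands of the statement after inserting the appropriate $\lambda_E^{-1}$ factor that converts a gradient bound into a $\|\cdot\|_\mathcal{K}$ bound. The VEM-stabilization part is handled directly: the stability of $S^E_h$, Lemma \ref{lm:bh}, Lemma \ref{lm:intf2} and $\|\nablabf(I-\P0)\wbf_p\|_{0,E} \lesssim \|\nablabf \wbf_p\|_{0,E}$ yield the remaining $\bigl(\sum_E \delta^2 h_E^{2k+2}|\ubf|^2_{k+1,E}\bigr)^{1/2}$ term after a final Cauchy--Schwarz in $E$.

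The main obstacle is the bookkeeping for the second and third CIP levels: the operator $\mathcal B$ must be expanded via the product rule so that derivatives of $\bb$ can be isolated and absorbed by its $[W^{k+1}_\infty(\Omega_h)]^2$ regularity, while the inverse trace estimates on polynomials of degree $k-2$ and $k-3$ have to be balanced exactly against the $h_E^4$ and $h_E^6$ weights of $J_h^{E,2}$ and $J_h^{E,3}$ so that no power of $h_E$ is lost. Once the correct $h_E$-powers and $\bb$-norms are accounted for on both slots, the final assembly via Cauchy--Schwarz over $\Omega_h$ is routine.
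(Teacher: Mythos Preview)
Your proposal is tripped up by a typo in the preceding proposition: the pressure-error argument produces $\mathcal K_h(\ubf_h,\wbf_p)$, so the quantity actually being bounded is $\xi_J=|J_h(\ubf_h,\wbf_p)|$, not $|J_h(\ubf_{\mathcal I},\wbf_p)|$, and the paper's proof works with $\ubf_h$ throughout. This is precisely why the stated bound contains $\|\ubf-\ubf_h\|_{\mathcal K}$ terms; if the first slot were really $\ubf_{\mathcal I}$ those terms would be superfluous and your first paragraph would already finish the proof.

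With the correct first slot, the paper's route is shorter than yours. For each CIP level it inserts the vanishing jump of $\ubf$ and then splits
\[
\nablabf \ubf - \nablabf \Pg \ubf_h \;=\; \nablabf (I-\Pg)\ubf \;+\; \nablabf \Pg(\ubf-\ubf_h)
\]
directly, never passing through $\ubf_{\mathcal I}$. The first piece yields the $h_E^{2k}|\ubf|^2_{k+1,E}$ contribution, the second yields $\lambda_E^{-1}\|\ubf-\ubf_h\|^2_{\mathcal K,E}$; on the $\wbf_p$ slot a polynomial inverse trace reduces everything to $\|\nablabf\wbf_p\|_{0,\Omega}=1$, exactly as you describe. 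The stabilisation part is handled as in \eqref{eq:vem-stab2}, again with $\ubf_h$, which simultaneously produces the fourth summand and the third-summand discrete-error term.

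Your alternative of writing $J_h(\ubf_h,\wbf_p)=J_h(\ubf_{\mathcal I},\wbf_p)-J_h(\errbf_h,\wbf_p)$ and then using Cauchy--Schwarz on the symmetric form for the second piece would also work, but as written your argument does not close: you start from $J_h(\ubf_{\mathcal I},\wbf_p)$, ``add and subtract $\ubf_h$'', and estimate $J_h(\ubf_{\mathcal I}-\ubf_h,\wbf_p)$, yet you never say what becomes of the remaining $J_h(\ubf_h,\wbf_p)$ term. Once you realise that $J_h(\ubf_h,\wbf_p)$ is the target rather than a leftover, the decomposition is the right one and the rest of your sketch goes through.
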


\begin{proof}
Similarly to Lemma \ref{lm:etaJ}, we obtain
\[
\begin{aligned}
\sum_{E \in \Omega_h} \delta_1 J_h^{E,1} (\ubf_{h} , \wbf_p)
& = 
\sum_{E \in \Omega_h} \frac{\delta_1}{2} \int_{\partial E} h_E^2 \bigl[\!\!\bigl[\bigl( \nablabf \Pg \ubf_{h} \bigr) \bb \cdot \mathbf t^e \bigr ]\!\! \bigr] \bigl[\!\!\bigl[\bigl( \nablabf \Pg \wbf_p \bigr) \bb \cdot \mathbf t^e \bigr ]\!\! \bigr] \, {\rm d }s \\
& = 
\sum_{E \in \Omega_h} \frac{\delta_1}{2} \int_{\partial E} h_E^2 \bigl[\!\!\bigl[\bigl( \nablabf \ubf - \nablabf \Pg \ubf_{h} \bigr) \bb \cdot \mathbf t^e \bigr ]\!\! \bigr] \bigl[\!\!\bigl[\bigl( \nablabf \Pg \wbf_p \bigr) \bb \cdot \mathbf t^e \bigr ]\!\! \bigr] \, {\rm d }s \\
& \lesssim
\left(\sum_{E \in \Omega_h} \delta^2\| \bb \|^4_{[L^\infty(\Omega)]^2} h_E^{3}  (\| \nablabf(I - \P0) \ubf  \|^2_{0,E} + \| \nablabf \P0 (\ubf -  \ubf_h)\|^2_{0,E} ) \!\right)^\frac{1}{2} \\
& \lesssim
\left(\sum_{E \in \Omega_h} \delta^2 \| \bb \|^4_{[L^\infty(\Omega)]^2} h_E^2  \bigl( h_E^{2k} | \ubf |^2_{k+1,E} + \lambda_E^{-1}\| \ubf - \ubf_h \|^2_{\mathcal K,E} \bigr)\right)^\frac{1}{2}\, .
\end{aligned}
\]
Similarly, we estimate the other levels of jumps, while the VEM stabilization part is estimated as in \eqref{eq:vem-stab2}.
\end{proof}

Combining the last four results, we obtain the following estimate.

\begin{theorem}[Convergence]
Under assumptions \textbf{(A1+)} and \textbf{(A2)}, let $(\ubf, p)$ the solution of \eqref{eq:oseen-variational-equation} and $(\ubf_h, p_h)$ be the solution of \eqref{eq:oseen-discrete-problem}, it holds that
\[
\begin{aligned}
\|p - p_h\|^2_{0, \Omega} 
&\lesssim 
\sum_{E \in \Omega_h} h_E^{2k} |p|_{k,E}^2 
+ \sum_{E \in \Omega_h} h_E^{2k+4} \|f\|^2_{k+1,E} \\
& \quad+ \sum_{E \in \Omega_h} 
\lambda_E^2 h_E^{2k} |u|^2_{k+1,E} + \sum_{E \in \mathcal{Q}_h} \| \bb \|^2_{[W^k_\infty(\Omega_h)]^2} h_E^{2k+2} \|u\|^2_{k+1,E}\\
& \quad + \sum_{E \in \Omega_h} 
\left(\sigma + \lambda_E + \| \bb \|^2_{[L^\infty(\Omega)]^2} h_E 
+ \frac{\| \bb \|^2_{[L^\infty(\Omega)]^2}}{\max\{\nu, \sigma\}} + \frac{\| \bb \|^2_{[L^\infty(\Omega)]^2} h_E^2}{\lambda_E} \right)
\|u - u_h\|^2_{\mathcal K}.
\end{aligned}
\]

\end{theorem}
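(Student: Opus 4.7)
The plan is to combine the abstract pressure error decomposition from the preceding proposition with the four $\xi$-estimates and a standard approximation bound for $p-p_\pi$. First, I would start from
\[
\| p - p_h \|_{0,\Omega} \;\le\; \| p - p_\pi \|_{0,\Omega} \;+\; \xi_f \;+\; \xi_A \;+\; \xi_c \;+\; \xi_J,
\]
square both sides and apply a discrete Cauchy--Schwarz inequality so that the five contributions split additively (up to a constant). The first term is purely an approximation error: since $p_\pi|_E = \Pi^{0,E}_{k-1} p$, Lemma \ref{lm:bh} (with $m=0$, $s=k$) yields $\|p-p_\pi\|_{0,E}^2 \lesssim h_E^{2k} |p|_{k,E}^2$, which produces the first summand on the right-hand side.

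Next, I would insert the lemmas already proved. The bound on $\xi_f$ gives the $\sum h_E^{2k+4}\|\fbf\|_{k+1,E}^2$ contribution. The bound on $\xi_A$ contributes $\sum \lambda_E^2 h_E^{2k} |\ubf|_{k+1,E}^2$ together with factors of the form $(\nu+\sigma+\lambda_E)\|\ubf-\ubf_h\|_{\mathcal K,E}^2$, which become part of the last row of the thesis. Lemma \ref{lm:xic} provides the advective consistency contribution: the polynomial approximation piece $\sum \|\bb\|^2_{[W^k_\infty(\Omega_h)]^2} h_E^{2k+2}\|\ubf\|_{k+1,E}^2$ and the factors $\tfrac{\|\bb\|^2_{[L^\infty]^2}}{\max\{\nu,\sigma\}}$ and $\lambda_E^{-1}\|\bb\|^2_{[L^\infty]^2} h_E$ multiplying $\|\ubf-\ubf_h\|_{\mathcal K,E}^2$. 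Finally, the $\xi_J$ lemma produces, after squaring, an extra $\sum \delta^2 h_E^{2k+2}|\ubf|_{k+1,E}^2$ term (absorbable into the already-present higher-order contributions since $\delta$ is a fixed stabilization parameter) and terms of the shape $\delta^2\lambda_E^{-1}\|\bb\|^4_{[L^\infty]^2} h_E^2\,\|\ubf-\ubf_h\|^2_{\mathcal K,E}$ and $\delta h_E^2\|\ubf-\ubf_h\|^2_{\mathcal K}$, which match (or refine) the coefficient $\|\bb\|^2_{[L^\infty]^2} h_E^2/\lambda_E$ appearing in the statement.

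After gathering, I would regroup the pieces into two families: those that are pure data/regularity terms (with factors $h_E^{2k}|p|_{k,E}^2$, $h_E^{2k+4}\|\fbf\|^2_{k+1,E}$, $\lambda_E^2 h_E^{2k}|\ubf|^2_{k+1,E}$, $\|\bb\|^2_{[W^k_\infty]^2} h_E^{2k+2}\|\ubf\|^2_{k+1,E}$), and those proportional to $\|\ubf-\ubf_h\|^2_{\mathcal K,E}$, where I would collect the five listed prefactors $\sigma$, $\lambda_E$, $\|\bb\|^2_{[L^\infty]^2}h_E$, $\|\bb\|^2_{[L^\infty]^2}/\max\{\nu,\sigma\}$, and $\|\bb\|^2_{[L^\infty]^2}h_E^2/\lambda_E$. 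I would also note that the $\delta$-dependent and $\nu$-dependent contributions that are \emph{not} explicit in the statement fit within the announced "lower order" omission.

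The main obstacle is essentially bookkeeping rather than any deep estimate: one must be careful, while squaring the abstract bound and splitting each $\xi$-lemma's output, not to accidentally generate extra powers of $h_E$ or $\lambda_E$ that would spoil the claimed rate, and one must check that each term arising from Lemma on $\xi_J$ (particularly the $\delta^2\|\bb\|^4_{[L^\infty]^2}$ contributions) indeed embeds into the five coefficient families in the last line of the statement after using $\lambda_E \le \max\{\nu,\sigma h_E^2\}$ and the quasi-uniformity of the mesh. Once this matching is verified, the theorem follows directly.
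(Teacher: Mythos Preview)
Your proposal is correct and follows exactly the route the paper takes: the paper's own ``proof'' is the single sentence ``Combining the last four results, we obtain the following estimate,'' so the entire argument is precisely the bookkeeping you outline---apply the abstract decomposition, bound $\|p-p_\pi\|$ by Lemma~\ref{lm:bh}, insert the four $\xi$-lemmas, and regroup. Your observation that the $\delta$-dependent contributions from $\xi_J$ must be matched to (or absorbed as lower order into) the five coefficient families in the last line is the only nontrivial point, and you have identified it correctly.
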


\section{Numerical results}

We consider a family of problems in the unit square $\Omega = (0,1) \times (0,1)$ and the following error quantities for the velocity will be considered:

\begin{itemize}
 \item \textbf{$H^1-$seminorm error}
 $$
 e_{H^1} := \sqrt{\sum_{E\in\Omega_h}\left\|\nablabf(\ubf-\Pi_k^\nabla \ubf_h)\right\|^2_{0,E}}\,,
 $$
 
 \item \textbf{$L^2-$norm error}
 $$
 e_{L^2} := \sqrt{\sum_{E\in\Omega_h}\left\|\ubf-\Pg \ubf_h\right\|^2_{0,E}}\,.
 $$
\end{itemize}
For the pressures, we consider the quantity
\[
e_{p} := \sqrt{\sum_{E\in\Omega_h}\left\|p-p_h\right\|^2_{0,E}}\,.
\]
For the tests, we consider a family $\{ \Omega_h\}_h$ of Voronoi meshes.

\paragraph{Solution with a boundary layer}
We consider a solution with a boundary layer. 
We select as solution of the problem $\eqref{eq:oseen-strong}$ the velocity $\ubf = (u_1,u_2)^T$ defined as
\[
u_1(x,y) = 0 \, ,
\qquad
u_2(x,y) = x - \frac{\exp(\frac{x-1}{\nu}) - \exp(\frac{-1}{\nu})}{1 - \exp(\frac{-1}{\nu})} \, ,
\]
and the pressure 
\[
p(x,y) = \frac{1}{2} - y \, .
\]
We choose $\nu = 10^{-9}$, $\sigma = 1$ and the advective field
\[
\bb(x,y) \coloneqq
\left[
\begin{aligned}
    &y^2 \\
    &x^2
\end{aligned}
\right] \, ,
\] 
as in \cite{BBCG:2024}.
To demonstrate the advantages of the CIP term, we consider different values for $\delta_1, \delta_2$ and $\delta_3$. 
The problem is solved with $k=3$ on a mesh consisting of 256 unit squares.
A square mesh was chosen to better visualize the numerical solutions. The results are shown in Figure \ref{fig:oseen-delta}.
We observe that, without any CIP term, the numerical solution does not match the analytical solution. By adding the first level of $J_h(\cdot,\cdot)$, we obtain a numerical solution that includes a boundary layer, but the peak of the function is around 1.2. The peak is reduced in some regions of the domain with the addition of the second level of CIP. With the final level of CIP, we observe an improved solution.
\begin{figure}[!htb]

\begin{center}
\begin{tabular}{ccc}
\includegraphics[width=0.30\textwidth]{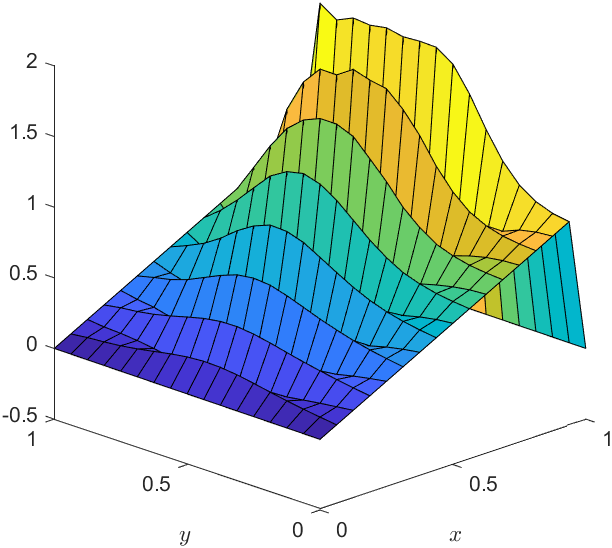} &\phantom{mm}&
\includegraphics[width=0.30\textwidth]{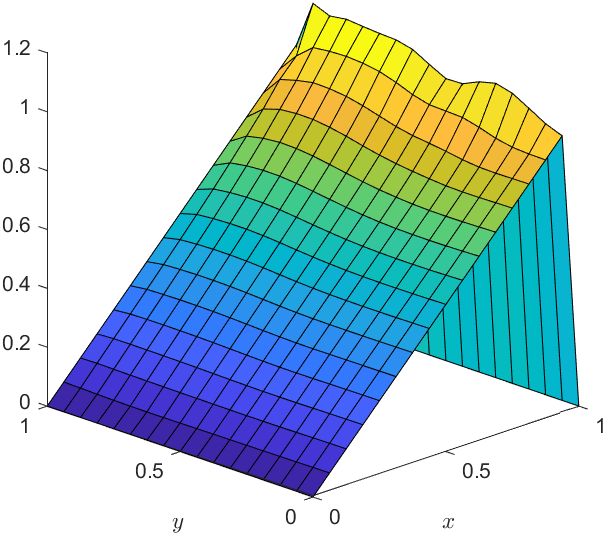} 
\end{tabular}

\begin{tabular}{ccc}
\includegraphics[width=0.30\textwidth]{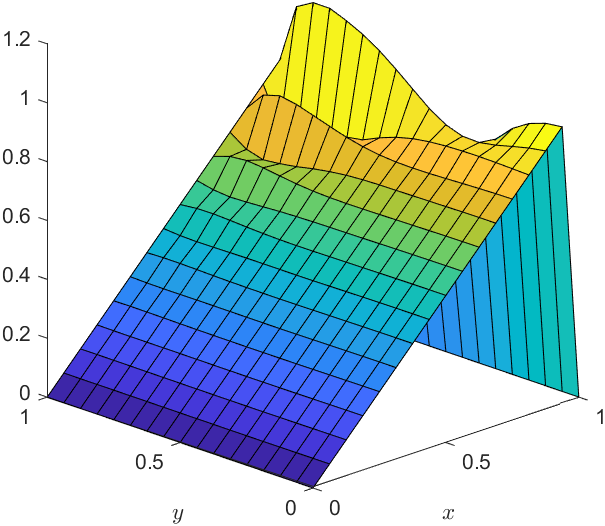} &\phantom{mm}&
\includegraphics[width=0.30\textwidth]{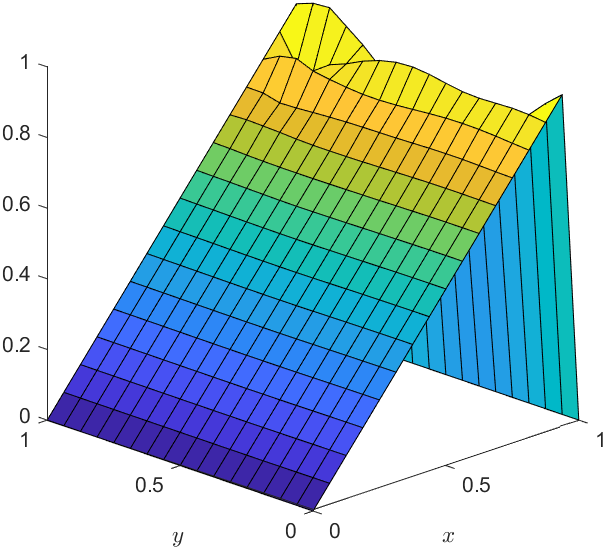} \\
\end{tabular}

\end{center}
\caption{Numerical solutions obtained with different choices of the triple $(\delta_1, \delta_2, \delta_3)$.Top-left $(\delta_1, \delta_2, \delta_3) = (0,0,0)$, top-right $(\delta_1, \delta_2, \delta_3) = (0.1,0,0)$, bottom-left $(\delta_1, \delta_2, \delta_3) = (0.1,0.01,0)$, bottom-right $(\delta_1, \delta_2, \delta_3) = (0.1,0.01,0.01)$ .}
\label{fig:oseen-delta}
\end{figure}

\paragraph{Convergence analysis.}
We consider as solution of \eqref{eq:oseen-strong} the couple $(\ubf, p)$ defined as
\[
\ubf(x,y) \coloneqq
\left[
\begin{aligned}
    -&\frac{1}{2} \sin(\pi x)^2 \cos(\pi y) \sin(\pi y) \\
    &\frac{1}{2} \sin(\pi y)^2 \cos(\pi x) \sin(\pi x)
\end{aligned}
\right] \, ,
\]
and
\[
p(x,y) \coloneqq \frac{1}{4}\bigl(\cos(4\pi x) - \sin(4\pi y)\bigr) \, .
\]
We choose as parameters of the method the values $\nu = 10^{-5}$, $\sigma = 1$ and the advective field defined as
\[
\bb(x,y) \coloneqq
\left[
\begin{aligned}
    &\sin(2\pi x) \sin(2 \pi y) \\
    &\cos(2\pi x) \cos(2 \pi y)
\end{aligned}
\right] \, .
\]
The CIP-parameters are set equal to $\delta_1 = 0.1$ and $\delta_2 = \delta_3 = 0.01$. 
We solve the equation using virtual element of order $ k = 2,3,4$. 
The results are depicted in Figure \ref{fig:oseen-velocities} and Figure \ref{fig:oseen-preassure}.
We can note that we have reached the optimal rate of convergence for the $H^1-$seminorm of the velocity and the $L^2-$norm of the pressure.
Actually, the method converges with a rate of $h^{k+1}$ in the $L^2$-norm of the velocity, which is a better result than the theoretical estimate for these types of problems that is $h^{k+\frac{1}{2}}$.
This might be due to the solution being very smooth.
\begin{figure}
\begin{center}
\begin{tabular}{ccccc}
\includegraphics[width=0.43\textwidth]{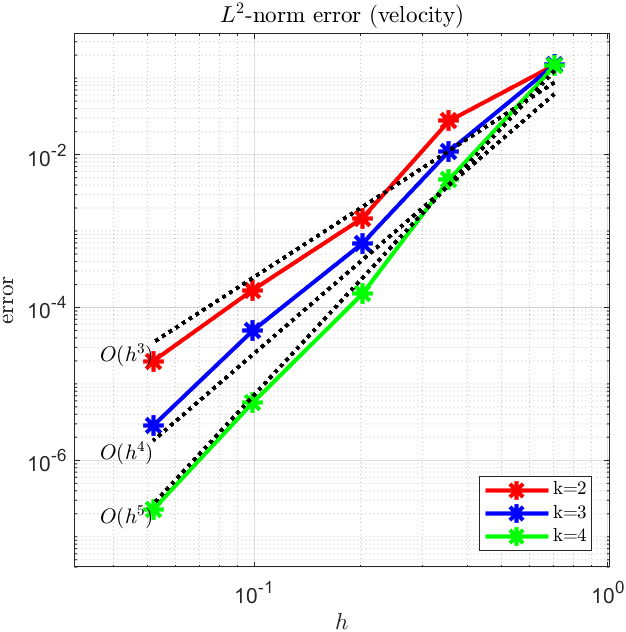}
&\phantom{mm}&
\includegraphics[width=0.43\textwidth]{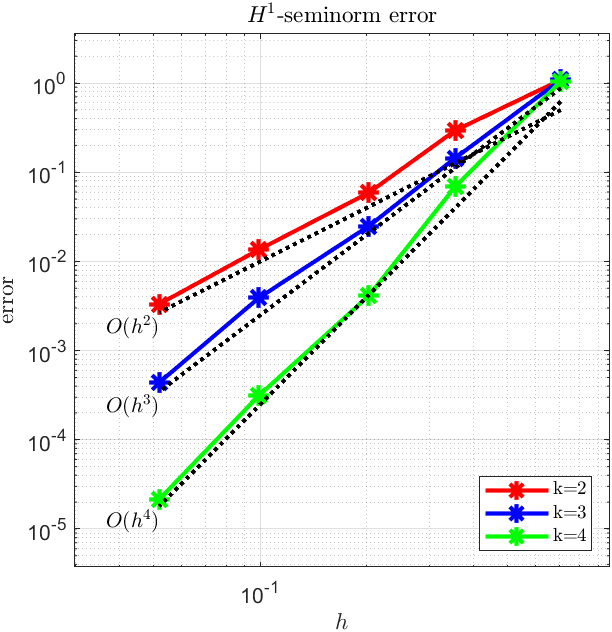} 
\end{tabular}
\end{center}
\caption{Convergences results for the velocity $\ubf$ in the $L^2$ norm (left column) and in the $H^1$-seminorm (right column). The red lines correspond to the case $k=2$, the blue lines to the case $k=3$, and the green lines to the case 
$k=4$.} 
\label{fig:oseen-velocities}
\end{figure}
\begin{figure}
\begin{center}
\begin{tabular}{ccc}
\includegraphics[width=0.43\textwidth]{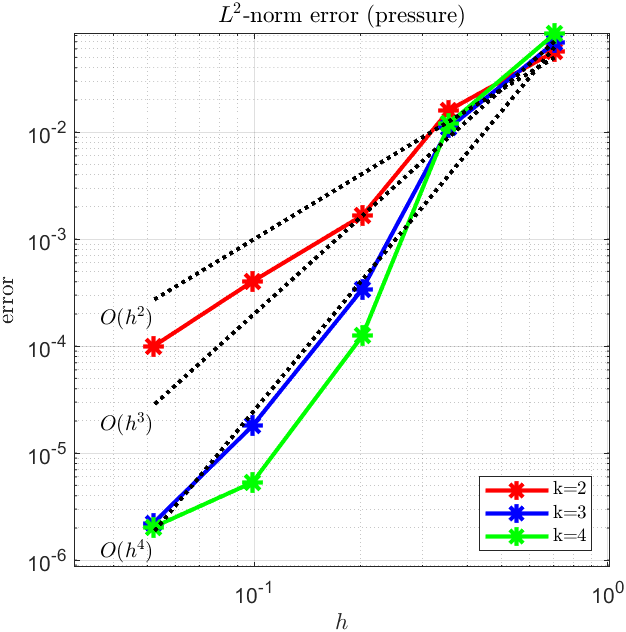}
\end{tabular}
\end{center}
\caption{Convergences for the pressure $p$ in the $L^2$-norm. The red lines correspond to the case $k=2$, the blue lines to the case $k=3$, and the green lines to the case 
$k=4$.} 
\label{fig:oseen-preassure}
\end{figure}

\paragraph{Pressure robustness.}
We want to verify if the method is pressure-robust in the VEM sense.
We consider the solution $(\ubf,p)$ of \eqref{eq:oseen-strong} defined as
\[
\ubf(x,y) = \underline 0 \, , \qquad p(x,y) = 3\cos(x) - 3\cos(y) \, .
\]
The parameters are set as in the first test case and select the order of the method $k=2,3,4$.
Since velocity $\ubf$ belongs to the discrete space, if the method is pressure robust, we expect to obtain an error of the order of the precision of the machine.
The VEM introduces in the error analysis of the velocity a little dependence on the pressure due to the approximation of the right-hand side.
This is a typical situation in VEM for this type of problems.
In Figure \ref{fig:oseen-pr}, we note that the errors are not of the order of the machine precision but they are quite low if compared to the previous cases.

\begin{figure}[!htb]

\begin{center}
\begin{tabular}{ccc}
\includegraphics[width=0.40\textwidth]{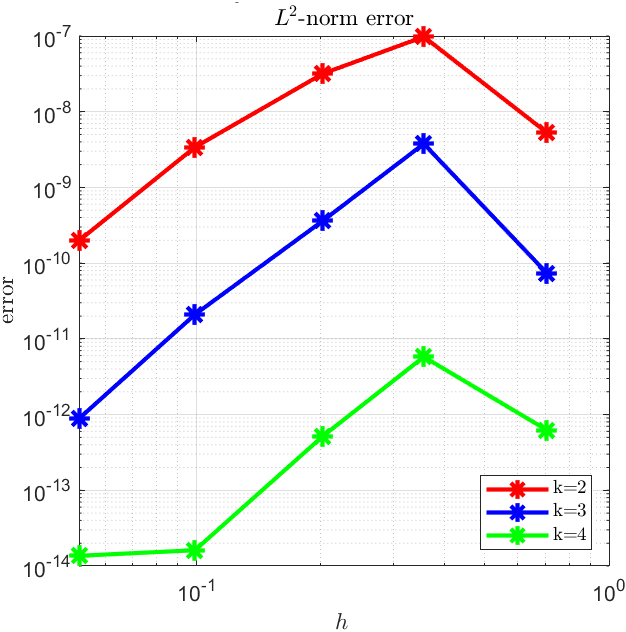} &\phantom{mm}&
\includegraphics[width=0.40\textwidth]{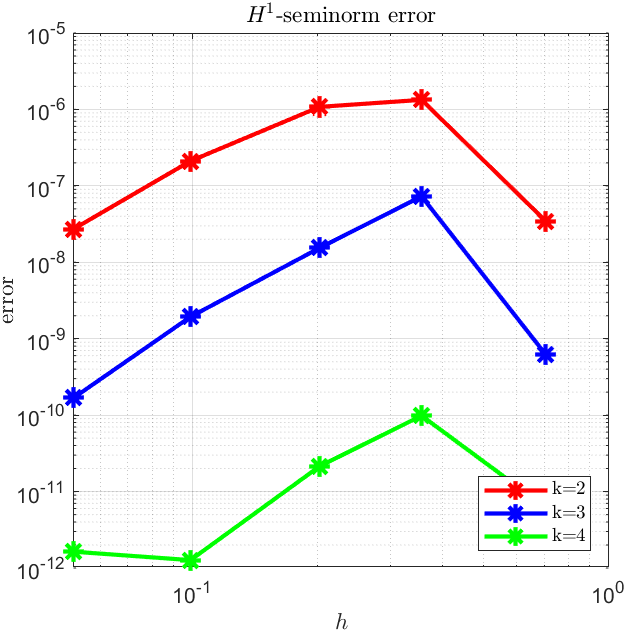} 
\end{tabular}

\end{center}
\caption{Result for the $L^2-$norm of the error (left column) and the $H^1-$seminorm of the error (right column). The red lines correspond to the case $k=2$, the blue lines to the case $k=3$, and the green lines to the case 
$k=4$. }
\label{fig:oseen-pr}
\end{figure}

\paragraph{Simulation of a fluid inside a channel with two pipes.}

In this test, we assume that a fluid, like water, is moving from left to right inside a channel, driven by an imposed flow velocity.
This channel contains two cylindrical obstacles, modeled as pipes positioned at different locations along the channel's length.
The domain is the same used in Section 3.3.
For simplicity, we assume that the flow velocity in the channel is uniform and directed along the horizontal axis, given by $\bb = (1,0)^T$.
We consider a scenario with low diffusion, setting the diffusion coefficient $\nu$ to $10^{-5}$, while neglecting the reaction term.
To ensure stability in the numerical solution, we apply the CIP stabilization method with parameters set to $(\delta_1, \delta_2, \delta_3) = (0.1, 0.01, 0.01)$.
No-slip boundary conditions are imposed along the top and bottom boundaries of the channel, as well as on the surfaces of the pipes.
On the left boundary, we prescribe an inflow condition with a parabolic velocity profile of the form
\[
-10\, (y-0.5)\,(y+0.5) \, .
\]
Finally, on the right boundary, we impose homogeneous Neumann boundary conditions to allow the fluid to exit the domain without further interaction.
The numerical solution in Figure \ref{fig:oseen-pipe} shows that the fluid retains its shape as it moves towards the first pipe.
Upon reaching the first obstacle, the fluid divides and flows both above and below the pipe, maintaining a symmetric shape.
This behavior is repeated at the second pipe, with the fluid continuing its trajectory in a similar pattern until it reaches the right boundary. We also tested the setup with square-based pipes to examine whether the corners would produce singularities, but we did not observe any significant differences.
We note that the following test cases do not fit within our theoretical analysis, as the domain contains two holes. Despite this, we attempt to assess the accuracy of our method. This test case is inspired by the one that appears in \cite{test}.
\begin{figure}[!htb]

\begin{center}
\includegraphics[width=0.97\textwidth]{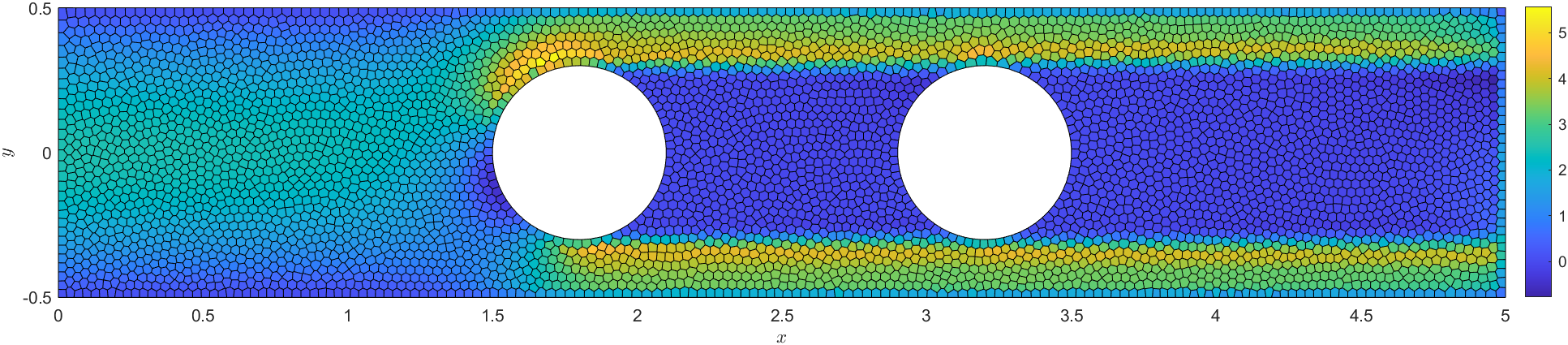} 
\end{center}
\caption{Numerical representation of a fluid that moves inside a channel with two pipes. The color represent the norm of the velocity.}
\label{fig:oseen-pipe}
\end{figure}

\section*{Acknowledgements}
\noindent
The author has been partially funded by the European Union (ERC, NEMESIS, project number 101115663).
Views and opinions expressed are however those of the author only and do not necessarily reflect those of the EU or the ERC Executive Agency.

\addcontentsline{toc}{section}{\refname}
\bibliographystyle{plain}
\bibliography{biblio}

\end{document}